\theoremstyle{plain}
\newtheorem{thm}{Theorem}[section]
\newtheorem{lem}[thm]{Lemma}
\newtheorem{prop}[thm]{Proposition}
\theoremstyle{definition}
\theoremstyle{remark}                  
\DeclareMathOperator*{\osc}{osc}
\definecolor{darkgreen}{rgb}{0,0.4,0}
\newcommand{\grad}{\nabla}
\newcommand{\Z}{\mathbb{Z}}
\newcommand{\real}{\mathbb{R}}
\def\e{\varepsilon}
\newcommand{\integer}{\mathbb{Z}}
\numberwithin{equation}{section}
\def\Xint#1{\mathchoice
{\XXint\displaystyle\textstyle{#1}}%
{\XXint\textstyle\scriptstyle{#1}}%
{\XXint\scriptstyle\scriptscriptstyle{#1}}%
{\XXint\scriptscriptstyle\scriptscriptstyle{#1}}%
\!\int}
\def\XXint#1#2#3{{\setbox0=\hbox{$#1{#2#3}{\int}$ }
\vcenter{\hbox{$#2#3$ }}\kern-.6\wd0}}
\def\dashint{\Xint-}
\newcommand{\eref}[1]{(\ref{e.#1})}
\begin{document}
\title[Continuity Properties in Boundary Data Homogenization]{Continuity Properties for Divergence Form Boundary Data Homogenization Problems}
\author{William M. Feldman}
\email{feldman@math.uchicago.edu}
\address{Department of Mathematics, The University of Chicago, IL 60637, USA}
\author{Yuming Paul Zhang}
\email{yzhangpaul@math.ucla.edu}
\address{
Department of Mathematics, University of California, Los Angeles 90095, USA}
\begin{abstract}
We study the continuity/discontinuity of the effective boundary condition for periodic homogenization of oscillating Dirichlet data for nonlinear divergence form equations and linear systems.  For linear systems we show continuity, for nonlinear equations we give an example of discontinuity.  
  \end{abstract}
\maketitle

\section{Introduction}

In this work we will study the following type of boundary layer problem in dimension $d \geq 2$
\begin{equation}\label{e.cellmain}
\left\{\begin{aligned}
&- \nabla  \cdot a(y,\grad v_n^s)  = 0 
  &\text{ in }& P_n^s = \{ y \cdot n >s\} ,\\
& v_n^s(y)=\varphi(y)  &\text{ on }& \partial P_n^s.
\end{aligned}
\right.
\end{equation}
Here $n \in S^{d-1}$ is a unit vector, $s \in \real$, $\varphi$ is continuous and $\mathbb{Z}^d$ periodic, the operator $a$ is also $\mathbb{Z}^d$ periodic in $y$ and will satisfy a uniform ellipticity assumption.  In this work will consider both the case of nonlinear scalar equations and linear systems so, for now, we do not specify the assumptions on $a$ any further.  

The \emph{boundary layer limit} of the system \eref{cellmain} is defined by 
\[ \varphi_*(n,s) := \lim_{R \to \infty} v_n(Rn+y) \hbox{ if the limit exists and is independent of $y \in \partial P_n^s$.}\]
 Typically $\varphi_*$ is independent of $s$ for irrational directions $n$ and we write $\varphi_*(n)$, while for rational directions $n \in \real \integer^d$ the limits above exist but depend on $s$.  If, additionally, the boundary layer limit is independent of $s$ then we say that the cell problem \eref{cellmain} homogenizes.  The focus of this article is on the continuity (or lack thereof) of $\varphi_*$ as a function of the normal direction.  The continuity of $\varphi_*$ is intrinsically linked with linearity of the operator $a(x,p)$.  In the case of a linear system we show continuity of $\varphi_*$, while in the case of nonlinear scalar equations we give an example where $\varphi_*$ is discontinuous, this indicates generic discontinuity for nonlinear equations.

Before we continue with our discussion of the continuity properties of $\varphi_*$ we give a brief explanation about where the problem \eref{cellmain} arises and why one should be interested in the continuity of $\varphi_*$.  Let $\Omega\subset \mathbb{R}^d$ be a bounded domain and consider the homogenization problem with oscillating Dirichlet boundary data,
\begin{equation}\label{e.maineqn}
\left\{\begin{aligned}
&- \nabla  \cdot (a(\tfrac{x}{\e},\grad u^\e))  =0 
  &\text{ in }& \Omega ,\\
& u^\e(x)=g(x,\tfrac{x}{\e})  &\text{ on }& \partial\Omega
\end{aligned}
\right.
\end{equation}
where $\e>0$ is a small parameter, $g(x,y)$ is continuous in $x,y$ and $\mathbb{Z}^d$ periodic in $y$.  This system is natural to consider in its own right, but also it arises naturally in the study of homogenization with non-oscillatory Dirichlet data when one studies the higher order terms in the asymptotic expansion, see \cite{Gerard-Varet:2012aa} where this is explained.  

The interest in studying \eref{maineqn} is the asymptotic behavior of the $u^\e$ solutions as $\e \to 0$.  This problem has been studied recently by a number of authors starting with the work of Gerard-Varet and Masmoudi \cite{Gerard-Varet:2012aa,Gerard-Varet:2011aa} and followed by \cite{Aleksanyan:2015aa,Aleksanyan:2017aa, Choi:2014aa,Feldman:2015aa,Feldman:2014aa,Prange:2013aa,Zhang:2017aa, Armstrong:2017aa, Guillen:2016aa}.  It has been established that solutions $u^\e$ converge, at least in $L^2(\Omega)$, to some $u^0$ which is a unique solution to
\begin{equation}
\left\{\begin{aligned}
&-\nabla  \cdot (a^0( \nabla  u^0))  =0
  &\text{ in }&  \Omega,\\
& u^0(x)=\varphi^0(x) &\text{ on }&  \partial\Omega 
\end{aligned}
\right.
\end{equation}
where $a^0$ and $\varphi^0(x)$ are called respectively the homogenized operator and homogenized boundary data.  The identification of the homogenized operator $a^0$ is a classical topic.  The homogenized boundary $\varphi^0$ is determined by the boundary layer problem \eref{cellmain},
\[ \varphi^0(x) = \varphi_*(n_x) \ \hbox{ when $n_x$ is the inward unit normal to $\Omega$ and } \ \varphi(y) = g(x,y).\] 
 That is, \eref{cellmain} can be viewed as a kind of cell problem associated with the homogenization of \eref{maineqn}. At least for linear equations this definition makes sense as long as the set of boundary points of $\partial \Omega$ where \eref{cellmain} does not homogenize, i.e. those with rational normal, has zero harmonic measure.  The convergence of $u^\e$ to $u^0$ has been established rigorously for linear systems by G\'{e}rard-Varet and Masmoudi~\cite{Gerard-Varet:2012aa}, and further investigations have yielded optimal rates of convergence, see Armstrong, Kuusi, Mourrat and Prange \cite{Armstrong:2017aa} and Shen and Zhuge \cite{Shen:2016aa}.  For nonlinear divergence form equations, to our knowledge, the problem has not been studied yet.  This is the source of our interest in the continuity properties of $\varphi_*$.
 
 The main result which we establish in this paper is that the directional limits of $\varphi_*$ at a rational direction are determined by a ``second cell problem" for the homogenized operator $a^0$.  Let us take $\xi \in \integer^d \setminus \{0\}$ to be an irreducible lattice vector and $\hat\xi$ to be the corresponding rational unit vector in the same direction.  Then the cell problem \eref{cellmain} solution $v_{\xi}^s$ exists for each $s \in \real$ and has a boundary layer limit, 
 \[ \varphi_*(\xi,s) := \lim_{R \to \infty} v_{\xi,s}(R\xi),\]
 but that limit typically is not independent of the translation $s$ applied to the half-space domain $P_\xi$.  We will see that $\varphi_*(\xi,s)$ is a $1/|\xi|$-periodic function on $\real$.  Now suppose that we have a sequence of directions $n_k \to \hat\xi$ such that,
 \[ \frac{\hat\xi-n_k}{|\hat \xi-n_k |} \to \eta  \ \hbox{ a unit vector with } \ \eta \perp \xi.\]
Call $\eta$ the approach direction of the sequence $n_k$ to $\xi$.  We will show that the limit of $\varphi_*(n_k)$ is determined by the following boundary layer problem. Define
 \begin{equation}\label{e.cell2}
\left\{
\begin{array}{ll}
 - \grad \cdot a^0(\grad w_{\xi,\eta}) = 0 & \hbox{ in } P_\xi \vspace{1.5mm}\\
 w_{\xi,\eta} = \varphi_*(\xi,x \cdot \eta) & \hbox{ on } \partial P_\xi
\end{array}
\right.
\end{equation}
then it holds
\[ \lim_{k \to \infty} \varphi_*(n_k) = \lim_{R \to \infty} w_{\xi,\eta}(R\xi).\]
Thus the directional limits of $\varphi_*$ at $\xi$ are determined by the boundary layer limit of a half-space problem for the homogenized operator.  This limit structure was first observed in Choi and Kim~\cite{Choi:2014aa} and developed further by the first author and Kim~\cite{Feldman:2015aa}, both papers studied non-divergence form and possibly nonlinear equations. We will explain in this paper how the second cell problem follows purely from \emph{qualitative} features which are shared by a wide class of elliptic equations, including divergence form linear systems, and both divergence and non-divergence form nonlinear equations.  We are somewhat vague about the hypotheses, which will be explained in detail in Sections \ref{sec: linear background} and \ref{sec: nonlinear background}.

\begin{thm}\label{main1}
The limit characterization \eref{cell2} holds for divergence form linear systems and nonlinear equations satisfying a uniform ellipticity condition.
\end{thm}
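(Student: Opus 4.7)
The plan is to reduce \eref{cell2} to a single homogenization-type limit by rescaling the near-rational cell problem by $\epsilon_k := |\hat\xi - n_k|$, so that $\hat\xi - n_k = \epsilon_k \eta_k$ with $\eta_k \to \eta$. Since the half-space is scale-invariant, I take (up to an irrelevant translation when $n_k$ is irrational) $s_k = 0$, write $v_k := v_{n_k}^0$, and introduce
\[
u_k(x) := v_k(x/\epsilon_k), \qquad x \in P_{n_k}.
\]
Then $u_k$ solves $-\nabla \cdot a(x/\epsilon_k, \nabla u_k) = 0$ in $P_{n_k}$ with boundary data $\varphi(x/\epsilon_k)$ on $\partial P_{n_k}$, and the domains satisfy $P_{n_k} \to P_{\hat\xi}$. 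The goal is to show $u_k \to w_{\xi,\eta}$ locally uniformly in $P_{\hat\xi}$ and then read off the boundary layer limit of $v_k$ at infinity through that of $w_{\xi,\eta}$.

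First I extract a subsequential limit. The maximum principle (scalar case) or the energy estimate (linear systems) gives $\|u_k\|_\infty \le \|\varphi\|_\infty$, and the relevant interior regularity (De~Giorgi--Nash--Moser for scalar equations, Campanato-type estimates for linear systems) yields $C^\alpha_{\mathrm{loc}}$-compactness away from the boundary. A subsequence converges to some $u$ locally uniformly in $P_{\hat\xi}$, and standard periodic homogenization of divergence form operators identifies $u$ as a solution of $-\nabla \cdot a^0(\nabla u) = 0$.

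The key step is identifying the boundary data of $u$ on $\partial P_{\hat\xi}$. Near $x^* \in \partial P_{\hat\xi}$, a microscopic boundary point $y_k^* \in \partial P_{n_k}$ with $y_k^* = x^*/\epsilon_k + O(1)$ satisfies $y_k^* \cdot \hat\xi = \epsilon_k \, y_k^* \cdot \eta_k \to x^* \cdot \eta$. On a mesoscopic window of scale $L$ with $1 \ll L \ll 1/\epsilon_k$ around $y_k^*$, the boundary $\partial P_{n_k}$ deviates from $\partial P_\xi^{s_k^*}$ (with $s_k^* := y_k^* \cdot \hat\xi$) by only $O(L\epsilon_k) = o(1)$. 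Continuous dependence of the rational cell problem on $s$ and on small boundary perturbations then gives $v_k(y) \approx v_\xi^{s_k^*}(y)$ throughout this window; applying the boundary layer stabilization $v_\xi^s(y) \to \varphi_*(\xi,s)$ for distances into $P_\xi^s$ tending to infinity yields $v_k(y) \to \varphi_*(\xi, x^* \cdot \eta)$. Translating to macroscopic coordinates, the effective boundary value of $u_k$ at $x^*$ is $\varphi_*(\xi, x^* \cdot \eta)$. Thus $u$ solves \eref{cell2}, and uniqueness gives $u = w_{\xi,\eta}$ together with convergence of the full sequence.

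To finish, for any fixed $M > 0$, $v_k(M n_k / \epsilon_k) = u_k(M n_k) \to w_{\xi,\eta}(M \hat\xi)$ as $k \to \infty$, and letting $M \to \infty$ recovers $\lim_{R \to \infty} w_{\xi,\eta}(R \hat\xi)$. The main obstacle I anticipate is the interchange of these two limits: one must compare $v_k(R n_k)$ with $\varphi_*(n_k)$ for $R \to \infty$ uniformly in $k$, in particular at scales $R \gg 1/\epsilon_k$ where the near-rational boundary layer finally stabilizes. This uniform stabilization is precisely the quantitative content of the second cell problem itself; I expect it will follow in the linear systems case from quantitative decay and almost-periodicity estimates in the spirit of \cite{Gerard-Varet:2012aa, Armstrong:2017aa}, and in the nonlinear scalar case from a qualitative compactness argument applied to a diagonal subsequence $R_k \to \infty$ chosen commensurate with $\epsilon_k^{-1}$.
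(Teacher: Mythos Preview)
Your overall architecture matches the paper: rescale by $\epsilon_k$, compare the near-rational problem to the rational cell problem $v_\xi^s$ on a mesoscopic window, identify the effective boundary data as $\varphi_*(\xi,x\cdot\eta)$, and then homogenize. The paper, however, executes this \emph{quantitatively} rather than by compactness, and the difference is exactly what closes the gap you flag.

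The interchange of limits you single out is the genuine obstacle, and neither of your proposed fixes works as stated. A diagonal subsequence gives $u_k(M_k n_k)\to w_{\xi,\eta}(M\hat\xi)$ only at a single point (or on compact sets) of the hyperplane $\{y\cdot n_k=M_k\}$; to reach $\varphi_*(n_k)$ via the $L^\infty$ estimate you would need this closeness uniformly over the whole (noncompact) hyperplane, and local-uniform convergence of $u_k$ does not supply that. The estimates of \cite{Gerard-Varet:2012aa,Armstrong:2017aa} concern Diophantine irrational directions and give no uniform boundary-layer rate as $n_k\to\hat\xi$.

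The paper resolves this by inserting an explicit intermediate problem $w^\e_{\xi,\eta}$, solving $-\nabla\cdot a(x/\tan\e,\nabla w^\e_{\xi,\eta})=0$ in the \emph{fixed} half-space $P_\xi$ with the \emph{periodic} boundary data $\varphi_*(\xi,x\cdot\eta)$. First, the mesoscopic comparison is made quantitative (Lemma~\ref{lem: vn vI}): using boundary H\"older regularity in the cone $P_n\cap P_\xi$ one gets $|v_n(x+Rn)-\varphi_*(\xi,x\cdot\eta\tan\e)|\lesssim (R\e)^\alpha+e^{-cR/|\xi|}$ \emph{uniformly} in $x\in\partial P_n$, which at $R=|\xi|\log(1/\e)$ yields a uniform error $|\xi|^\alpha\e^\alpha$. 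Second, and this is the tool that replaces your interchange, Lemma~\ref{lem: exp tail nonlinear variant} shows that a half-space problem with $(1/|\xi|)$-periodic boundary data and an operator oscillating at scale $\e$ has oscillation at infinity $\lesssim \e^\alpha$; applied to $w^\e_{\xi,\eta}$ (and, after moving $P_n$ to $P_\xi$, to the rescaled $v_n$) this transports the finite-height comparison all the way to the boundary-layer limits. Combining these with the half-space homogenization Proposition~\ref{prop: half-space hom} gives $|\varphi_*(n)-L(\xi,\eta)|\lesssim|\xi|^\alpha\e^\alpha$ directly, without ever interchanging limits. If you want to salvage a compactness proof, the missing ingredient is precisely a qualitative version of Lemma~\ref{lem: exp tail nonlinear variant}: once your step~3 gives $u_k$ uniformly close to the $1/|\xi|$-periodic profile $\varphi_*(\xi,\cdot\cdot\eta)$ on a full slice, the approximate periodicity plus large-scale interior regularity iterates to force the far-field oscillation down to $o_k(1)$.
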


Once we have established \eref{cell2}, the question of qualitative continuity/discontinuity of $\varphi_*$ is reduced to a much simpler problem.  For linear equations the homogenized operator $a^0$ is linear and translation invariant and so a straightforward argument, for example by the Riesz representation theorem, shows that,
\[ \lim_{R \to \infty} w_{\xi,\eta}(R\xi) = |\xi| \int_0^{1/|\xi|} \varphi_*(\xi,s) \ ds,\]
i.e. it is the average over a period of $\varphi_*(\xi,\cdot)$. Evidently this does not depend on the approach direction $\eta$.  Thus qualitative continuity of $\varphi_*$ for linear problems follows easily once we establish \eref{cell2}.  Our arguments to derive \eref{cell2} can be quantified to obtain a modulus of continuity, which we make explicit below, however so far we cannot push the method to obtain the optimal modulus of continuity. The recent work Shen and Zhuge~\cite{Shen:2017aa} obtains an almost Lipschitz modulus of continuity by a different method, we will compare their approach with ours below. 

\begin{thm}\label{main2}
For elliptic linear systems, $d \geq 2$, for any $0 < \alpha < 1/d$ there is a constant $C\geq1$ depending on $\alpha$ as well as universal parameters associated with the system (see Section~\ref{sec: linear background}) such that, for any $n_1,n_2$ irrational,
\[  |\varphi_*(n_1) - \varphi_*(n_2) | \leq C \|\varphi\|_{C^5} |n_1 - n_2|^{\alpha}.\]
\end{thm}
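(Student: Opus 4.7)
The plan is to combine a quantitative version of Theorem~\ref{main1} with Dirichlet's simultaneous Diophantine approximation, interposing a rational direction between the two given irrational ones. Let $n_1,n_2 \in S^{d-1}$ be irrational, set $\delta := |n_1-n_2|$, and for each irreducible $\xi \in \integer^d \setminus \{0\}$ write
\[
\overline{\varphi_*}(\xi) := |\xi|\int_0^{1/|\xi|}\varphi_*(\xi,s)\,ds,
\]
the common limit of $w_{\xi,\eta}(R\xi)$ as $R\to\infty$ identified just after the statement of Theorem~\ref{main1}.

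The first step is a quantitative version of Theorem~\ref{main1}: for every irrational $n$ close enough to $\hat\xi$,
\[
\bigl|\varphi_*(n) - \overline{\varphi_*}(\xi)\bigr| \;\leq\; C\,\|\varphi\|_{C^5}\,|\xi|\,|n-\hat\xi|^{\beta},
\]
for some exponent $\beta \in (0,1)$ that can be chosen arbitrarily close to $1$. The prefactor $|\xi|$ reflects the minimal mesoscopic window near $\partial P_n$ needed to resolve the period of the rational boundary, while $|n-\hat\xi|^\beta$ measures the discrepancy between the half-spaces $P_n$ and $P_\xi$ on that window and controls the quantitative homogenization error in comparing $v_n$ with the second cell problem solution $w_{\xi,\eta}$. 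The subsequent decay of $w_{\xi,\eta}$ to $\overline{\varphi_*}(\xi)$ is superpolynomial --- essentially $e^{-cR|\xi|^{2}}$, since the boundary data on $\partial P_\xi$ has nonzero Fourier modes at frequencies $\gtrsim |\xi|$ and each decays exponentially in the half-space for the constant coefficient homogenized system --- and so contributes a harmless error. The hypothesis $\varphi \in C^5$ is used to propagate regularity of $\varphi$ through the first cell problem to $\varphi_*(\xi,\cdot)$ and thence into the Schauder-type estimates required to achieve a near-linear rate in the homogenization step.

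For the second step, Dirichlet's simultaneous approximation theorem provides, for each $M \geq 1$, an irreducible $\xi \in \integer^d$ with $|\xi| \leq M$ and $|n_1 - \hat\xi| \leq C M^{-d/(d-1)}$; the triangle inequality then yields $|n_2 - \hat\xi| \leq \delta + C M^{-d/(d-1)}$. Setting $M \sim \delta^{-(d-1)/d}$ balances these and produces $|n_i - \hat\xi| \leq C\delta$ for $i=1,2$. Plugging into Step~1 for each $i$ and applying the triangle inequality gives
\[
\bigl|\varphi_*(n_1) - \varphi_*(n_2)\bigr| \;\leq\; C\,\|\varphi\|_{C^5}\,|\xi|\,\delta^\beta \;\leq\; C\,\|\varphi\|_{C^5}\,\delta^{\beta - (d-1)/d}.
\]
For any prescribed $\alpha < 1/d$, one selects $\beta$ close enough to $1$ so that $\beta - (d-1)/d > \alpha$, which is possible precisely because the supremum $1 - (d-1)/d$ equals the advertised ceiling $1/d$.

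The main technical obstacle is Step~1: obtaining a near-linear quantitative homogenization rate $\beta$ for the second cell problem \eref{cell2}. This requires careful bookkeeping of the mesoscopic window size, which must simultaneously be large compared with $|\xi|^{-1}$ (to see the periodic boundary layer structure) and small compared with $|n-\hat\xi|^{-1}$ (so that $\partial P_n$ and $\partial P_\xi$ are effectively indistinguishable across it); and it requires high regularity of the boundary data $\varphi_*(\xi,\cdot)$, inherited from $\varphi \in C^5$ via successive estimates on the first cell problem \eref{cellmain} and its boundary layer. The restriction $\alpha < 1/d$ is intrinsic to this approach, arising as $1 - (d-1)/d$ where $(d-1)/d$ is the optimal joint exponent in Dirichlet's simultaneous approximation.
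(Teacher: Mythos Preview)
Your overall strategy---quantitative rational-direction approximation combined with a simultaneous Dirichlet approximation---is exactly the paper's. However, your Step~2 misstates Dirichlet's theorem: it is \emph{not} true that for every $M$ one can find $\xi$ with both $|\xi|\le M$ and $|n_1-\hat\xi|\le CM^{-d/(d-1)}$. (In $d=2$, take $n_1$ at angle $\theta=1/(2M)$ from $e_1$; the nearest lattice directions with $|\xi|\le M$ are $e_1$ and $(M,1)/\sqrt{M^2+1}$, each at distance $\sim 1/M$, not $1/M^2$.) What Dirichlet actually gives is a $\xi$ with $|\xi|\lesssim k$ for some $1\le k\le M$ and $|n_1-\hat\xi|\lesssim k^{-1}M^{-1/(d-1)}$, so only the \emph{product} $|\xi|\,|n_1-\hat\xi|\lesssim M^{-1/(d-1)}$ is controlled. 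The two extremes you quote cannot be achieved simultaneously in general.

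This matters because your Step~1 estimate has the unbalanced form $|\xi|\,|n-\hat\xi|^\beta$. With the correct Dirichlet one gets $|\xi|\,|n-\hat\xi|^\beta \lesssim k^{1-\beta}M^{-\beta/(d-1)}\le M^{1-\beta d/(d-1)}$, and after choosing $M=\delta^{-(d-1)/d}$ this does recover $\delta^{\beta-(d-1)/d}$---so your final inequality survives, but only after a computation you did not record. The paper avoids this awkwardness by proving Step~1 in the balanced form $(|\xi|\,|n-\hat\xi|)^\alpha$ (Theorem~\ref{mainineq}), which pairs directly with Dirichlet's product bound $|\xi|\,|n_i-\hat\xi|\lesssim \delta^{1/d}$ to give $\delta^{\alpha/d}$ immediately. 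Your heuristic for the prefactor $|\xi|$ (``mesoscopic window'') is reasonable, but the sharper balanced estimate comes out naturally from the actual error analysis in Section~\ref{sec: asymptotics}.
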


For nonlinear problems we expect that $\varphi_*$ is discontinuous at rational directions, at least for generic boundary data and operators.  A result of this kind was established for non-divergence form equations in \cite{Feldman:2015aa}.  We have not been able to prove such a general result for divergence form nonlinear equations, but we have constructed an explicit example showing that discontinuity is possible.

\begin{thm}\label{main3}
For $d \geq 3$ there exist smooth boundary data $\varphi$ and uniformly elliptic, positively $1$-homogeneous, nonlinear operators $a(x,p)$ such that $\varphi_*$ is discontinuous at some rational direction.
\end{thm}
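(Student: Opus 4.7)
The plan is to construct an explicit example in dimension $d=3$ (the extension to $d\geq 4$ is immediate by the same formulas). I take an operator with no $y$-dependence, $a(y,p)=a(p)=\nabla F(p)$, where
\begin{equation*}
F(p) = \tfrac{1}{2}|p|^2 + \epsilon H(p), \qquad H(p) = \frac{p_1^2 p_3}{|p|},
\end{equation*}
and $\epsilon>0$ will be chosen small. Since $H$ is smooth on $\real^3\setminus\{0\}$, positively $2$-homogeneous, and $D^2 H$ is uniformly bounded on the unit sphere, for small $\epsilon$ the operator $a$ is uniformly elliptic, positively $1$-homogeneous, and genuinely nonlinear ($H$ is not a quadratic form). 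Because $a$ has no $y$-dependence, the homogenized operator is $a^0 = a$. I take the $\integer^d$-periodic boundary data $\varphi(y)=\cos(2\pi y_3)$ and the rational direction $\xi = e_3$.

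Since $\varphi$ is constant in the tangential variables on every hyperplane $\{y_3 = s\}$, the unique bounded solution of the first cell problem \eref{cellmain} is $v^s_{e_3}\equiv \cos(2\pi s)$, and hence $\varphi_*(e_3,s)=\cos(2\pi s)$. By Theorem~\ref{main1}, the directional limit of $\varphi_*$ at $e_3$ from an approach direction $\eta \perp e_3$ equals $\lim_{R\to\infty} w_{\xi,\eta}(Re_3)$, where $w_{\xi,\eta}$ solves \eref{cell2} in $\{x_3>0\}$ with boundary data $\cos(2\pi (x\cdot\eta))$ on $\{x_3 = 0\}$. By translation invariance of $a$, I use the ansatz $w_{\xi,\eta}(x) = W(x\cdot\eta, x_3)$ to reduce to a two-dimensional half-plane problem whose effective operator is the slice of $\nabla F$ in the plane through $\eta$ and $e_3$; crucially, since $F$ is not quadratic, this slice depends nontrivially on $\eta$.

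To extract that dependence I expand $w_{\xi,\eta} = w^{(0)} + \epsilon w^{(1)} + O(\epsilon^2)$. The leading term is the harmonic extension $w^{(0)}(x) = \cos(2\pi(x\cdot\eta))\,e^{-2\pi x_3}$, which has boundary layer limit $0$; the first-order correction solves $-\Delta w^{(1)} = \nabla \cdot \nabla H(\nabla w^{(0)})$ with zero boundary data. Using the positive $1$-homogeneity $\nabla H(\lambda p)=\lambda \nabla H(p)$ and the explicit form of $\nabla w^{(0)}$, the right-hand side is periodic in $s := x\cdot\eta$ and decays as $e^{-2\pi x_3}$. Only its zeroth Fourier mode in $s$ contributes to the limit along $e_3$, and solving the resulting ODE yields
\begin{equation*}
\lim_{R\to\infty} w^{(1)}(Re_3) = \bar Q_3(\eta), \qquad \bar Q_3(\eta) := \frac{1}{2\pi}\int_0^{2\pi} \partial_{p_3}H\bigl(\sin\theta\,\eta + \cos\theta\, e_3\bigr)\,d\theta.
\end{equation*}
A direct computation gives $\partial_{p_3} H(p) = p_1^2(|p|^2 - p_3^2)/|p|^3$, which on the unit circle in the $(\eta,e_3)$-plane reduces to $\eta_1^2 \sin^4\theta$, so $\bar Q_3(\eta) = \tfrac{3}{8}\eta_1^2$. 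The approach direction $\eta = e_1$ thus gives directional limit $\tfrac{3\epsilon}{8} + O(\epsilon^2)$ while $\eta = e_2$ gives $O(\epsilon^2)$, and these disagree for all sufficiently small $\epsilon>0$, which is exactly the claimed discontinuity.

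The main technical obstacle is to justify this order-$\epsilon$ expansion of the \emph{boundary layer limit} rather than just of the solution on bounded sets. I expect it to follow from standard $C^{1,\alpha}$ regularity for uniformly elliptic divergence form equations together with exponential decay estimates for the non-zero tangential Fourier modes of $w_{\xi,\eta} - w^{(0)}$, showing that $\epsilon\mapsto \lim_{R\to\infty} w_{\xi,\eta}^\epsilon(Re_3)$ is continuously differentiable at $\epsilon=0$ uniformly in the unit vector $\eta$; some care is needed near $x_3=\infty$ where $\nabla w^{(0)}$ vanishes and $\nabla H$ is merely Lipschitz.
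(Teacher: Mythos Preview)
Your approach differs substantially from the paper's. The paper chooses a specific operator $a(p)=(p_1,\,p_2,\,p_3+f(p_1,p_3))$ with $f(p_1,p_3)=\tfrac{1}{8}(\sqrt{8p_1^2+9p_3^2}+p_3)$ and boundary data $\varphi_*(\xi,s)=\tfrac13+\cos s$, engineered so that $u=(\tfrac13+\cos x)e^{-z}$ is the \emph{exact} solution of the second cell problem when $\eta=e_1$, yielding $L(\xi,e_1)=0$ with no approximation. For $\eta=e_2$ the equation reduces to $-\nabla\cdot(v_y,\tfrac98 v_z+\tfrac38|v_z|)=0$; the same $u$ is checked to be a strict subsolution, and Serrin's strong maximum principle then forces the true boundary layer limit to be strictly positive. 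No perturbation is involved.

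Your perturbative computation is formally correct (up to a sign: solving the zero-mode ODE gives $\lim_{R\to\infty}w^{(1)}(Re_3)=-\bar Q_3(\eta)$ rather than $+\bar Q_3(\eta)$, which is harmless). The real gap is the one you flag yourself: you have not shown that $\epsilon\mapsto L(\xi,\eta;\epsilon)$ is differentiable at $0$. Controlling the remainder $z_\epsilon=w_{\xi,\eta}-w^{(0)}-\epsilon w^{(1)}$ uniformly on the half-space requires comparing $\epsilon\nabla w^{(1)}+\nabla z_\epsilon$ to $|\nabla w^{(0)}|\sim e^{-2\pi x_3}$ (since $D^2H$ is only $0$-homogeneous, the Taylor remainder of $\nabla H$ carries a factor $1/|\nabla w^{(0)}|$), and the $k=\pm1$ tangential Fourier modes of $w^{(1)}$ pick up a resonant factor $x_3 e^{-2\pi x_3}$, so the naive bound $|\epsilon\nabla w^{(1)}|\lesssim|\nabla w^{(0)}|$ fails once $x_3\gg 1/\epsilon$. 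This is likely fixable, but it is genuine work and not a consequence of ``standard $C^{1,\alpha}$ regularity.''

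One simplification you missed, which brings your argument closer to the paper's: for your specific $H=p_1^2p_3/|p|$ one has $\nabla H\equiv 0$ on $\{p_1=0\}$. Since the second-cell solution for $\eta=e_2$ is independent of $x_1$, its gradient lies in $\{p_1=0\}$, so for $\eta=e_2$ the equation \emph{is} Laplace's equation and $L(\xi,e_2)=0$ exactly, for every $\epsilon$. This reduces everything to showing $L(\xi,e_1)\neq 0$ for some small $\epsilon$; a sub/supersolution argument using $w^{(0)}+\epsilon w^{(1)}$ as a barrier, in the spirit of the paper's use of the strong maximum principle, would likely close the gap more cheaply than a full justification of the expansion.
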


 We compare with the work of Shen and Zhuge~\cite{Shen:2017aa} which studies continuity properties of $\varphi_*$ for linear divergence form systems.  They show, in the linear systems case, that $\varphi_*$ is in $W^{1,p}$ for every $p<\infty$.  Their work relies on a formula for the homogenized boundary condition from Armstrong, Kuusi, Mourrat and Prange \cite{Armstrong:2017aa} which in turn relies on the result of Kenig, Lin and Shen~\cite{Kenig:2014aa} giving precise asymptotics for the Green's function and Poisson kernel associate with the equation \eref{maineqn}. For this reason their result seems to be restricted to the linear setting.  Our approach, although it does not yield an optimal quantitative estimate as of yet, does not rely on formulas for the Poisson kernel asymptotics and applies to both linear and nonlinear equations (including both divergence form, as established here, and nondivergence form, as established previously in \cite{Feldman:2015aa}).  We note that in the course of proving Theorem~\ref{main2} we actually show H\"{o}lder regularity for every $0 < \alpha < 1$ at each lattice direction $\xi \in \integer^d \setminus \{0\}$, the modulus of continuity however depends on the rational direction and degenerates as $|\xi| \to \infty$.  This is why we only end up with (almost) H\"{o}lder-$\frac{1}{d}$ continuity in the end.

 \subsection{Notation} We go over some of the notations and terminology used in the paper.  We will refer to constants which depend only on the dimension or fundamental parameters associated with the operator $a(x,p)$ (to be made specific below), e.g. ellipticity ratio or smoothness norm, as universal constants.  We will write $C$ or $c$ for universal constants which may change from line to line.  Given some quantities $A,B$ we write $A \lesssim B$ if $A \leq CB$ for a universal constant $C$.  If the constants depend on an additional non-universal parameter $\alpha$ we may write $A \lesssim_\alpha B$.  
 
 We will use various standard $L^p$ and H\"{o}lder $C^{k,\alpha}$ norms.  For H\"{o}lder semi-norms, which omit the zeroth order $\sup$ norm term, we write $[f]_{C^{k,\alpha}}$.  Given a measurable set $E \subset \real^d$ we will also use the $L^p_{avg}(E)$ norm which is defined by
 \[ \|f\|_{L^p_{avg}(E)} = \left(\frac{1}{|E|}\int_{E} |f|^p \right)^{1/p}.\]
 The oscillation is a convenient quantity for us since the solution property for the equations we consider is preserved under addition of constant functions. This is usually defined for a scalar valued function $u : E \to \real$ on a set $E \subset \real^d$ as $\osc_E u=\sup_E u - \inf_E u$.  We use a slightly different definition which also makes sense for vector valued $u : E \to \real^N$,
 \[ \osc_E u := \inf \{ r >0 : \ \hbox{ there exists }  u_0 \in \real^N  \hbox{ s.t. }   \|u - u_0\|_{L^\infty(E)} \leq r/2 \}.\]

 \section{Explanation of the limit structure at rational directions}\label{sec: outline}
   We give a high level description of the asymptotics of the boundary layer limit at rational directions.  What we would like to emphasize throughout this description is that the argument is basically geometric, and has to do with the way that $\partial P_n$ intersects the unit periodicity cell in the asymptotic limit as $n$ approaches a rational direction.  This calculation relies only on certain qualitative features of Dirichlet problems for elliptic equations which are true both for divergence and non-divergence form both linear (including systems) and non-linear.  To emphasize the level of abstraction we will write the boundary layer problem in the following form
 \begin{equation}\label{e.abstractcell}
 \left\{
 \begin{array}{ll}
 F[v_n,x] = 0 & \hbox{ in } P_n := \{ x \cdot n >0\} \vspace{1.5mm}\\
 v_n = \varphi & \hbox{ on } \partial P_n
 \end{array}\right.
 \end{equation}
 Always $F$ and $\varphi$ will share $\integer^d$ periodicity in the $x$ variable.  In order to carry out the heuristic argument we will need the following properties of the class of equations/systems.  We emphasize that the following properties are not stated very precisely, they are merely meant to be illustrative.
\begin{enumerate}[$(i)$]
\item (Homogenization) There is an elliptic operator $F^0$ in the same class such that if $u^\e$ is a sequence of solutions of $F[u^\e,\tfrac{x}{\e}] = 0$ in a domain $\Omega$ converging to some $u^0$ then $F[u^0] = 0$ in $\Omega$.
\item (Continuity with respect to boundary data in $L^\infty$)  There exists $C>0$ so that if $n \in S^{d-1}$ and $u_1$, $u_2$ are bounded solutions of \eref{abstractcell} with respective boundary data $\varphi_1$ and $\varphi_2$ then,
\[ \sup_{P_n} |u_1 - u_2| \leq C \sup_{\partial P_n} |\varphi_1 - \varphi_2|.\]
\item (Large scale interior and boundary regularity estimates) There is $\alpha \in (0,1)$ such that for any $r>0$ if $F[u,x] = 0$ in $B_r \cap P_n$, where $B_r$ is some ball of radius $r$, 
\[ [u]_{C^{\alpha}(B_{r/2} \cap P_n)} \lesssim  r^{-\alpha}\osc_{B_{r} \cap P_n} u + [g]_{C^\alpha( B_{r} \cap \partial P_n)}. \]
 \end{enumerate}

  The heuristic outline below applies to a wide class of elliptic equations, already the arguments were carried out rigorously for non-divergence nonlinear equations by Choi and Kim~\cite{Choi:2014aa} and the first author and Kim~\cite{Feldman:2015aa} and similar ideas were used for parabolic equations in moving domains by the second author in~\cite{Zhang:2017aa}.  Here we will be studying divergence form equations, linear systems and nonlinear scalar equations. 
 
 To begin we need to understand the boundary layer limit at a rational direction.  Let $\xi \in \integer^d \setminus \{0\}$ and consider the solution $v^s_\xi(x)$ of,
  \begin{equation}\label{e.abstractcellrat}
 \left\{
 \begin{array}{ll}
 F[v^s_\xi,x] = 0 & \hbox{ in } P_\xi^s = \{ x \cdot n > s\} \vspace{1.5mm}\\
 v^s_\xi = \varphi & \hbox{ on } \partial P_\xi^s.
 \end{array}\right.
 \end{equation}
 Translating the half-space, by changing $s$, changes the part of the data $\varphi$ seen by the boundary condition.  Thus the boundary layer limit of $v^s_\xi$ can depend on the parameter $s$, we define
 \[ \varphi_*(\xi,s) = \lim_{R \to \infty} v^s_\xi(R\xi).\]
 As will become clear, this particular parametrization of the boundary layer limits is naturally associated with the asymptotic structure of the boundary layer limits for directions $n$ near $\xi$.
 
 The next step is to understand the geometry near $\xi$.  Let $n \in S^{d-1}$ be a direction near $\xi$ and $v_n$ be the corresponding half-space solution.  We can write,
 \[ n = (\cos \e)\hat\xi -(\sin \e) \eta \ \hbox{ for some small angle $\e$ and a unit vector } \ \eta \perp \xi.\]
We obtain an asymptotic for $v_n$ at an intermediate length scale. 
 
 Let $x \in \partial P_n$, since $n$ is close to $\xi$, $\partial P_n$ is close to $\partial P_\xi + x \cdot \hat \xi$ in a large neighborhood, any scale $o(1/\e)$, of $x$.  By using the local up to the boundary regularity we see that $v_n$ and $v^{s}_\xi$, with $s = x \cdot \hat \xi$, are close on the boundary of their common domain, at least in this $o(1/\e)$ neighborhood of $x$. Now $v_\xi^s$ has a boundary layer limit $\varphi_*(\xi,s)$, and the length scale $|\xi|$ associated with the boundary layer depends on $\xi$, but not on $\e$.  Thus for $\e$ small and $ |\xi| \ll R \ll 1/\e$
 \[ v_n(x+Rn) = \varphi_*(\xi,x \cdot \hat \xi) + o_\e(1) = \varphi_*(\xi,\tan \e (x \cdot \eta)) + o_\e(1).\]
This is one of the main places where we use the large scale boundary regularity estimates, property $(iii)$ above. Thus, moving into the domain by $Rn$ and rescaling to the scale $1/\tan \e$, i.e. letting $w^\epsilon(x)\sim v_n(\frac{x+Rn}{\tan \e})$, we find that the boundary layer limit is well approximated by the boundary layer limit of
   \begin{equation}\label{e.cell2ep}
 \left\{
 \begin{array}{ll}
 F[w^\e,\tfrac{x}{\tan\e}] = 0 & \hbox{ in } P_\xi  \vspace{1.5mm}\\
 w^\e = \varphi_*(\xi,x \cdot \eta) & \hbox{ on } \partial P_\xi
 \end{array}\right.
 \end{equation}
 in the limit as $\e \to 0$.  Now taking the limit as $\e \to 0$ of in \eref{cell2ep} we find the ``second cell problem"
   \begin{equation}\label{e.cell2F}
 \left\{
 \begin{array}{ll}
F^0[w_{\xi,\eta}] = 0 & \hbox{ in } P_\xi  \vspace{1.5mm}\\
 w_{\xi,\eta} = \varphi_*(\xi,x \cdot \eta) & \hbox{ on } \partial P_\xi.
 \end{array}\right.
 \end{equation}
   Thus we characterize the directional limits at the rational direction $\xi$ as the boundary layer limits of the associated second cell problem
 \[ \lim_{k \to \infty} \varphi_*(n_k) = \lim_{R \to \infty} w_{\xi,\eta}(R\xi) \ \hbox{ if } \ \frac{\hat\xi -n_k}{| \hat\xi-n_k|} \to \eta.\]
 With this characterization the \emph{qualitative} continuity and discontinuity of $\varphi_*$ can be investigated solely by studying the problem \eref{cell2F}.  
 
 In the following, Section~\ref{sec: linear background} and Section~\ref{sec: nonlinear background}, we will explain background regularity results for linear systems and nonlinear divergence form equations and the well-posedness of Dirichlet problems in half-spaces.  In particular we will prove that properties we used in the heuristic arguments above do hold for the type of equations/systems we consider.  In Section~\ref{sec: boundary layers} we will go into more detail about the boundary layer problem \eref{cellmain} in rational and irrational half-spaces.  In Section~\ref{sec: asymptotics} we will make rigorous the above outline obtaining intermediate scale asymptotics which lead to the second cell problem \eref{cell2F}.  In Section~\ref{sec: continuity} we show how to derive continuity of $\varphi_*$ from the second cell problem for linear problems, and in Section~\ref{sec: discontinuity} we show how nonlinearity can cause discontinuity of $\varphi_*$.

\section{Linear Systems Background Results}\label{sec: linear background}
 In this section we will recall some results about divergence form linear systems.   Let $\Omega$ be a domain of $\real^d$ and $N \geq 1$, we consider solutions of the following elliptic linear system:
\[ - \nabla  \cdot (A(x) \nabla  u) =0  \ \hbox{ in } \ \Omega\]
where $u \in H^1(\Omega;\real^N)$ is at least a weak solution. Here we use the notation $A = (A_{ij}^{\alpha\beta}(x))$ for $1 \leq \alpha,\beta \leq d$ and $1 \leq i,j \leq N$ defined for $x \in \mathbb{R}^d$, where we mean, using summation convention, 
\[\left(\nabla  \cdot (A(x) \nabla  u^\e)\right)_i = \partial_{x_\alpha}(A^{\alpha\beta}_{ij}(x)\partial_{x_\beta} u_j^\e).\] 
We assume that $A$ satisfies the following hypotheses:
\begin{enumerate}[(i)]
\item Periodicity:
\begin{equation}
A(x+z) = A(x) \ \hbox{ for all } \ x \in \mathbb{R}^d, z \in \mathbb{Z}^d.
\end{equation}
\item Ellipticity: for some $\lambda>0$ and all $\xi \in \mathbb{R}^{d \times N}$,
\begin{equation}
 \lambda \xi^i_\alpha\xi^i_\alpha \leq A^{\alpha\beta}_{ij}\xi^i_\alpha\xi^j_\beta \leq \xi^i_\alpha\xi^i_\alpha.
 \end{equation}
 \item Regularity: for some $M>0$,
 \begin{equation}
 \|A\|_{C^{5}(\mathbb{R}^d)} \leq M.
  \end{equation}
\end{enumerate}
We remark that the regularity on $A$ is far more than is necessary for most of the results below.  When we say that $C$ is a universal constant below we mean that it depends only on the parameters, $d,N,\lambda,M$.

\subsection{Integral Representation}
  Consider the following boundary layer problem, which will be the main object of our study,
\begin{equation}\label{e.linear hs}
\left\{\begin{aligned}
&  -\nabla  \cdot (A(x) \grad u) = \grad \cdot f +g &\text{ in }&  P_n,\\
&  u(x)=\varphi(x) &\text{ on }&   \partial P_n.
\end{aligned}
\right.
\end{equation}
for $f,g$ smooth vector valued functions with compact support and $\varphi$ continuous and bounded. A solution is given by the Green's function formula
\[u(x)=\int_{P_n} \grad G(x,y)\cdot f(y)dy  + \int_{P_n}G(x,y)g(y)dy +\int_{ \partial P_n}P(x,y)\varphi(y)dy.\]
Here $G,P$ are Green matrix and Poisson kernel corresponding to our operator.  For $y \in P_n$, $G$ solves
\begin{equation}
\left\{\begin{aligned}
&  -\nabla_x  \cdot (A(x) \grad_x G(x,y)) = \delta(x-y)I_N &\text{ in }&  P_n,\\
&  G(x,y)=0 &\text{ on }&   \partial P_n
\end{aligned}
\right.
\end{equation}
and the Poisson kernel is given, for $x\in P_n$ and $y \in \partial P_n$, by
\[P(x,y) =  - n \cdot (A^t(y) \grad_y G(x,y)) \ \hbox{ i.e. } \ P_{ij}(x,y) =  - n_\alpha A^{\beta \alpha}_{ki}(y) \partial_{y_\beta}G_{kj}(x,y).\]

Following from the work of Avellaneda-Lin~\cite{Avellaneda:1991aa}, and exactly stated in \cite{Gerard-Varet:2012aa} (Proposition 5), $G$ and $P$ satisfy the same bounds as for a constant coefficient operator:
\begin{thm}\label{integral}
Call $\delta(y):= \textup{dist}(y,\partial P_n)$. For all $x\ne y$ in $ P_n$, one has
\begin{align*}
 |G(x,y)|&\leq\frac{C}{|x-y|^{d-2}}  \quad \text{ for }d\geq 3,\\
|G(x,y)|&\leq {C}(|\log |x-y||+1) \quad \text{ for }d=2,\\
|G(x,y)|&\leq \frac{C\delta(x)\delta(y)}{|x-y|^{d}}  \quad \text{ for all }d,\\
 |\nabla_x G(x,y)|  &\leq \frac{C}{|x-y|^{d-1}} \quad \text{ for all }d,\\
|\nabla_x G(x,y)|  &\leq C(\frac{\delta(y)}{|x-y|^{d}}+
 \frac{\delta(x)\delta(y)}{|x-y|^{d+1}} )
 \quad \text{ for all }d.
\end{align*}
For all $x\in P_n$ and $y\in \partial P_n $, one has
\begin{eqnarray*}
 &&|P(x,y)|\leq\frac{C\delta (x)}{|x-y|^{d}},\\
 &&|\nabla P(x,y)|\leq C(\frac{1}{|x-y|^{d}}+
 \frac{\delta(x)}{|x-y|^{d+1}} ).
\end{eqnarray*}
\end{thm}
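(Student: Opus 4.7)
The plan is to follow the compactness-method approach of Avellaneda--Lin \cite{Avellaneda:1991aa}. The engine behind all the bounds is a pair of uniform Lipschitz regularity estimates for weak solutions of $-\nabla \cdot (A(\cdot)\nabla u) = 0$: an interior estimate
\[ \|\nabla u\|_{L^\infty(B_{r/2}(x_0))} \lesssim \frac{1}{r}\|u\|_{L^\infty(B_r(x_0))}, \]
and, whenever $B_r(x_0)$ is centered on $\partial P_n$ and $u$ vanishes on $\partial P_n \cap B_r(x_0)$, a boundary analog
\[ \|\nabla u\|_{L^\infty(B_{r/2}(x_0)\cap P_n)} \lesssim \frac{1}{r}\|u\|_{L^\infty(B_r(x_0)\cap P_n)}. \]
Both are proved by a blow-up/contradiction argument: if the bound fails, one extracts a sequence of normalized counterexamples and uses $H$-convergence to pass to the limit, obtaining a solution of the homogenized constant-coefficient system, for which classical Schauder theory yields the required Lipschitz bound and hence a contradiction. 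The uniformity in $n \in S^{d-1}$ is automatic, since the boundary estimate uses only that $\partial P_n$ is a hyperplane; after rotation one works in a fixed half-space, with $\mathbb{Z}^d$-periodicity replaced by an approximate stationarity at scales larger than unity.

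With these Lipschitz estimates in hand, the bounds of Theorem~\ref{integral} reduce to arguments that model the constant-coefficient case. I would first construct $G(\cdot,y)$ by replacing the Dirac source with a smooth approximation, solving the half-space Dirichlet problem via Lax--Milgram, and passing to the limit using energy estimates (for $d=2$ one subtracts a logarithmic normalization). The diagonal size bound
\[ |G(x,y)| \lesssim \frac{1}{|x-y|^{d-2}} \qquad (d\geq 3) \]
then follows in the style of Littman--Stampacchia--Weinberger: on $B_r(x)$ with $r = |x-y|/2$ the function $G(\cdot,y)$ solves the homogeneous system, so the interior Lipschitz estimate together with energy bounds on annular shells (iterated outward from $y$) produces the decay; the $d=2$ case yields the logarithm by the same argument.

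The off-diagonal estimate $|G(x,y)| \lesssim \delta(x)\delta(y)/|x-y|^d$ is where the boundary estimate enters. Since $G(\cdot,y)$ vanishes on $\partial P_n \cap B_{|x-y|/2}(x_b)$, where $x_b$ is the nearest boundary point to $x$, the boundary Lipschitz estimate applied to the previous bound gives $|G(x,y)| \lesssim \delta(x)/|x-y|^{d-1}$. Iterating in the second variable through the symmetry $G(x,y) = (G^*(y,x))^t$, where $G^*$ is the Green matrix of the adjoint system (which satisfies the same hypotheses), and applying the boundary Lipschitz estimate once more at $y$, produces the product $\delta(x)\delta(y)/|x-y|^d$. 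The two gradient bounds on $G$ then follow by applying the appropriate (interior or boundary) Lipschitz estimate to these size bounds and tracking the powers of $|x-y|$ and $\delta$ generated by the rescaling. The Poisson kernel bounds are immediate from the definition $P(x,y) = -n\cdot(A^t(y)\nabla_y G(x,y))$.

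The hard part is establishing the boundary Lipschitz estimate with constants uniform in $n \in S^{d-1}$. The compactness method works naturally in a fixed half-space, but after rotating $P_n$ to an axis-aligned half-space the coefficient matrix is no longer $\mathbb{Z}^d$-periodic, only periodic with respect to a rotated lattice; one must verify that this rotated ensemble still $H$-converges to a homogenized operator satisfying Schauder estimates with constants depending only on the universal parameters $d, N, \lambda, M$. Once that uniformity is secured, the remainder is a careful exercise in scaling and interpolation.
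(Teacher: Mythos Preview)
Your proposal is correct and follows the Avellaneda--Lin compactness method, which is exactly what the paper invokes: the paper does not prove Theorem~\ref{integral} at all but simply cites it as following from \cite{Avellaneda:1991aa} and as stated precisely in \cite{Gerard-Varet:2012aa} (Proposition~5). Your sketch is a faithful outline of how those references establish the bounds.
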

Although it is not precisely stated there, the methods of Avellaneda-Lin~\cite{Avellaneda:1991aa} also can achieve the same bounds for the Green's function and Poisson kernel associated with the operator $- \grad \cdot( A(x) \grad )$ in the strip type domains
\[ \Pi_n(0,R) := \{ 0 < x \cdot n < R \}, \]
with constants independent of $R$.  This will be useful later.

From the Poisson kernel bounds we can derive the $L^\infty$ estimate which replaces the maximum principle for linear systems.
\begin{lem}\label{lem: system max}
Suppose that $u_1,u_2$ are bounded solutions of \eref{linear hs} with respective boundary data $\varphi_1,\varphi_2$ and zero right hand side.  Then,
\[  \sup_{P_n} |u_1 - u_2| \leq C\| \varphi_1 - \varphi_2 \|_{L^\infty(\partial P_n)}\]
where $C$ is a universal constant.  The same holds for solutions in $\Pi_n(0,R)$.
\end{lem}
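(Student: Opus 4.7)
My plan is to reduce by linearity to the homogeneous problem, establish the Poisson integral representation, and bound the resulting integral directly using the kernel estimates of Theorem~\ref{integral}. Setting $u := u_1 - u_2$ and $\varphi := \varphi_1 - \varphi_2$, it suffices to show that any bounded solution of the homogeneous system with boundary data $\varphi$ satisfies $\sup_{P_n}|u| \leq C\|\varphi\|_{L^\infty(\partial P_n)}$. Assuming the representation
\[
u(x) = \int_{\partial P_n} P(x,y)\,\varphi(y) \, dS(y),
\]
the bound follows from the kernel estimate $|P(x,y)| \leq C\delta(x)/|x-y|^d$: parameterizing $y \in \partial P_n$ by $z = y - y_0 \in n^\perp$, where $y_0$ is the orthogonal projection of $x$ onto $\partial P_n$, we have $|x-y|^2 = \delta(x)^2 + |z|^2$, and the substitution $z = \delta(x)w$ yields
\[
\int_{\partial P_n} \frac{\delta(x)\, dS(y)}{|x-y|^d} = \int_{\real^{d-1}} \frac{dw}{(1+|w|^2)^{d/2}} = C_d.
\]

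The main technical step will be to justify the Poisson representation for a given bounded solution. I would apply Green's identity in the truncated domain $\Omega_R := P_n \cap B_R$, using the half-space Green matrix $G(x,\cdot)$ from Theorem~\ref{integral}, which vanishes on $\partial P_n$. This gives
\[
u(x) = \int_{\partial P_n \cap B_R} P(x,y)\,\varphi(y) \, dS(y) + \mathcal{I}_R(x),
\]
where $\mathcal{I}_R(x)$ collects the Green-identity contributions from the outer spherical cap $P_n \cap \partial B_R$. The first term converges to the full Poisson integral as $R \to \infty$ by dominated convergence, with majorant supplied by the calculation above. For $\mathcal{I}_R$, I would combine the Green matrix bound $|G(x,y)| \leq C\delta(x)\delta(y)/|x-y|^d$ and the analogous bound on $\grad_y G$ from Theorem~\ref{integral} with the interior gradient estimate $|\grad u(y)| \lesssim \|u\|_\infty/\delta(y)$, which follows from the Avellaneda--Lin $C^{1,\alpha}$ interior theory for smooth periodic systems~\cite{Avellaneda:1991aa}. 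For $y$ on the outer cap with $R \gg |x|$ one has $|x-y| \gtrsim R$ and $\delta(y) \leq R$, and the $\delta(y)$ factor in the Green matrix bound cancels the $1/\delta(y)$ in the gradient estimate, producing $|\mathcal{I}_R(x)| \lesssim \|u\|_\infty \,\delta(x)/R \to 0$.

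The hard part will be this vanishing of the outer boundary term: it is a delicate balance between surface area $\sim R^{d-1}$ and the $R^{-d}$ decay of the Green matrix, and it relies crucially on the Avellaneda--Lin bounds being $R$-independent. Once that is established the strip case follows from the same argument: the Green matrix and Poisson kernel for $\Pi_n(0,R)$ satisfy the same bounds uniformly in $R$ (as remarked after Theorem~\ref{integral}), the representation now contains a Poisson integral over each of the two flat faces, and each is estimated by the planar integral computation above.
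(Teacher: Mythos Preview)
Your proposal is correct and follows the same approach as the paper: the Poisson integral bound via Theorem~\ref{integral} is exactly what the paper calls the ``standard calculation,'' and your Green's identity argument in truncated domains, with the $\delta(y)$ cancellation between the Green matrix bound and the interior gradient estimate, is the natural way to carry out the uniqueness/representation step that the paper defers to \cite{Gerard-Varet:2012aa}, Section~2.2.
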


For the solutions given by the Poisson kernel representation formula the result of Lemma~\ref{lem: system max} follows from a standard calculation using Theorem~\ref{integral}.  There is some subtlety in showing uniqueness, see \cite{Gerard-Varet:2012aa} (Section 2.2) for a proof.

\subsection{Large scale boundary regularity}  In this section we consider the large scale boundary regularity used in the heuristic argument of Section~\ref{sec: outline} for linear elliptic systems.  We will need a boundary regularity result from Avellaneda-Lin~\cite{Avellaneda:1987aa} (Theorem 1).  For the below we assume $\Omega$ is some domain with $0 \in \partial \Omega$ and that $u^\e$ solves
\[ - \grad \cdot (A(\tfrac{x}{\e}) \grad u^\e ) = 0 \ \hbox{ in } \ \Omega \cap B_1 \ \hbox{ and } \ u^\e = g \ \hbox{ on } \  \partial \Omega \cap B_1. \]

\begin{lem}\label{lem: flat reg}
For every $0<\alpha <1$ there is a constant $C$ depending on $\alpha$ and universal quantities such that, if $\Omega = \{x_d >0 \} \cap B_1 =: B_1^+$,
\[ [u^\e]_{C^\alpha(B_{1/2}^+)} \leq C( \| \grad g\|_{L^\infty(\{x_d = 0\} \cap B_1)} + \|u^\e - g(0)\|_{L^2(B_1^+)}) ,\]
and for every $\nu >0$
\[ \|\grad u^\e\|_{L^\infty(B_{1/2}^+)} \leq C( \| \grad g\|_{C^{0,\nu}(\{x_d = 0\} \cap B_1)} + \|u^\e - g(0)\|_{L^2(B_1^+)}). \]
\end{lem}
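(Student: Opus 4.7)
My plan is to apply the Avellaneda--Lin compactness method, which yields large-scale boundary regularity for $L^\varepsilon := -\nabla\cdot(A(\cdot/\varepsilon)\nabla)$ by comparison with the constant coefficient homogenized operator $L^0 := -\nabla\cdot(A^0\nabla)$, whose Schauder theory up to a flat boundary is classical (since $A^0$ is constant and $g$ is smooth). After replacing $u^\varepsilon$ by $u^\varepsilon - g(0)$ and rescaling, it suffices to prove the two bounds with the right-hand sides normalized to $1$.

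The heart of the argument is a one-step improvement-of-flatness lemma: for each $\alpha\in(0,1)$ I would show there exist $\theta\in(0,1/4)$ and $\varepsilon_0>0$ so that whenever $\varepsilon\le\varepsilon_0$, $g(0)=0$, $\|\nabla g\|_{L^\infty(\{x_d=0\}\cap B_1)}\le 1$ and $\|u^\varepsilon\|_{L^2(B_1^+)}\le 1$, one has $\|u^\varepsilon\|_{L^\infty(B_\theta^+)}\le\theta^\alpha$. This is proved by contradiction: a violating sequence $(u^{\varepsilon_k},g_k)$ with $\varepsilon_k\to 0$ is, after passing to a subsequence, precompact in $L^2(B_1^+)$ by the Caccioppoli energy estimate combined with $H^1$ Rellich; the boundary data $g_k$ are equi-Lipschitz so Arzel\`a--Ascoli gives $g_k\to g^0$ uniformly on $\{x_d=0\}\cap B_1$; and a standard corrector (or div--curl) argument identifies the limit $u^0$ as a solution of $L^0 u^0 = 0$ with boundary data $g^0$ satisfying $g^0(0)=0$ and $\|\nabla g^0\|_{L^\infty}\le 1$. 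The constant coefficient Schauder estimate gives $u^0\in C^{1,\nu}$ up to $\{x_d=0\}\cap B_{1/2}$, so $|u^0(x)|\le C|x|$ in $B_\theta^+$, yielding $\|u^0\|_{L^\infty(B_\theta^+)}\le C\theta \le \tfrac12\theta^\alpha$ once $\theta$ is small enough, contradicting the hypothesis for large $k$.

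With the one-step lemma in hand, one iterates on dyadic scales: setting $u_k(x):=\theta^{-k\alpha}u^\varepsilon(\theta^k x)$ with boundary data $\theta^{-k\alpha}g(\theta^k\,\cdot)$ and oscillation scale $\varepsilon/\theta^k$, the same lemma applies so long as $\varepsilon/\theta^k\le\varepsilon_0$, which gives $\|u^\varepsilon\|_{L^\infty(B_{\theta^k}^+)}\le C\theta^{k\alpha}$ for all $\theta^k\ge \varepsilon/\varepsilon_0$. For scales below $\varepsilon$, one rescales via $v(y):=u^\varepsilon(\varepsilon y)$, which solves $-\nabla\cdot(A(y)\nabla v)=0$ with \emph{smooth} coefficients and no further oscillation; the classical Schauder boundary estimate for smooth coefficient systems handles this scale range with scale-invariant constants. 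Combining the two regimes yields the claimed $C^\alpha$ bound. The Lipschitz bound is obtained by an analogous iteration of an \emph{improvement-of-linearity} step: at each scale one subtracts a vector $p_k\cdot x$ and shows the remainder decays like $\theta^{k(1+\alpha)}$; the compactness argument now produces a limit $u^0$ whose boundary data $g^0$ has $\nabla g^0\in C^{0,\nu}$, so $L^0$ Schauder yields $u^0\in C^{1,\alpha}$ up to the flat boundary, providing the affine approximation.

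The main obstacle is the compactness step: one must control the boundary traces quantitatively enough to get uniform convergence of $g_k$ \emph{and} $H^1_{\mathrm{loc}}$ precompactness of $u^{\varepsilon_k}$ near the boundary, and then identify the weak limit as a solution of the homogenized system in a neighborhood of the flat piece of $\partial B_1^+$. A secondary subtlety is that the normalization $\|u^\varepsilon\|_{L^2(B_1^+)}\le 1$ must be preserved under the dyadic rescaling; as in Avellaneda--Lin this is arranged by subtracting a suitable interior average (rather than the point value $u^\varepsilon(0)$) at each step and absorbing the small error from oscillation of $g$ through $\|\nabla g\|_{L^\infty}$.
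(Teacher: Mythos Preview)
Your proposal is correct and is essentially the Avellaneda--Lin compactness argument. The paper does not give its own proof of this lemma; it simply cites it as Theorem~1 of Avellaneda--Lin~\cite{Avellaneda:1987aa}, and what you have sketched is precisely that argument (one-step improvement by compactness and homogenization, dyadic iteration down to scale $\varepsilon$, then classical Schauder for the smooth-coefficient regime below scale $\varepsilon$; the Lipschitz case via subtraction of affine functions).
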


We need the H\"{o}lder regularity result in cone type domains which are the intersection of two half-spaces with normal directions $n_1,n_2$ very close to each other.  We will consider the more general class of domains $\Omega$ which are a Lipschitz graph over $\real^{d-1}$ with small Lipschitz constant. In particular we assume that there is an $f : \real^{d-1} \to \real$ Lipschitz with $f(0) = 0$ such that,
\[ \Omega \cap B_1 = \{(x',x_d): x_N > f(x') \} \cap B_1. \]

\begin{lem}\label{lem: bdry cont linear}
For every $0<\alpha <1$ there is a $\delta(\alpha)>0$ universal such that, if $\Omega$ as above with $\|\grad f\|_{\infty} \leq \delta$, then
\[ [u^\e]_{C^\alpha(\Omega \cap B_{1/2})} \leq C( \| \grad g\|_{L^\infty(\partial \Omega \cap B_1)} + \|u^\e - g(0)\|_{L^2(\Omega \cap B_1)}) .\]
\end{lem}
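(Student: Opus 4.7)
The plan is to iterate the flat-boundary estimate of Lemma~\ref{lem: flat reg} via a Campanato-type scheme, using the smallness of $\delta$ to treat $\Omega$ as a small perturbation of a flat half-space at each dyadic scale. After translation take $x_0 = 0 \in \partial\Omega$ with $f(0)=0$ and normalize $g(0) = 0$. The hypothesis $\|\grad f\|_{L^\infty} \leq \delta$ implies $\Omega \cap B_r \subset H_r := \{x_d > -2\delta r\}$ for every $r \leq 1$. Setting $E(r) := \osc_{\Omega \cap B_r} u^\e$, I aim to show a one-step decay
\[ E(\theta r) \leq \theta^\alpha E(r) + C r \|\grad g\|_{L^\infty(\partial\Omega \cap B_1)} \]
for some $\theta = \theta(\alpha) \in (0,1/2)$, from which the H\"older estimate of the lemma follows by standard Campanato iteration.

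For the one-step decay at scale $r$, I would construct the comparison $v^\e_r$ solving the same system on $H_r \cap B_r$ with boundary data $\tilde g(x') := g(x', f(x'))$ on $\{x_d = -2\delta r\} \cap B_r$ (so that $\|\grad \tilde g\|_{L^\infty} \lesssim \|\grad g\|_{L^\infty}$) and a Lipschitz extension of $u^\e$ on $\partial B_r \cap H_r$. Translating $H_r$ to the standard upper half-space and applying Lemma~\ref{lem: flat reg} at a H\"older exponent $\beta > \alpha$ slightly above $\alpha$, together with Lemma~\ref{lem: system max}, yields $\osc_{H_r \cap B_{\theta r}} v^\e_r \lesssim_\beta \theta^\beta \bigl(E(r) + r\|\grad g\|_{L^\infty}\bigr)$. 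To control $w := u^\e - v^\e_r$ on $\Omega \cap B_r$, I would apply Lemma~\ref{lem: system max}: on $\partial\Omega \cap B_r$, using $v^\e_r(x', -2\delta r) = g(x', f(x')) = u^\e(x', f(x'))$, one has $|w(x', f(x'))| = |v^\e_r(x', -2\delta r) - v^\e_r(x', f(x'))| \leq (3\delta r)^\beta [v^\e_r]_{C^\beta}$; on $\partial B_r \cap \Omega$, the choice of extension makes $w=0$. Hence $\|w\|_\infty \lesssim_\beta \delta^\beta (E(r) + r\|\grad g\|_\infty)$, and combining with the oscillation of $v^\e_r$ gives $E(\theta r) \lesssim_\beta (\theta^\beta + \delta^\beta)(E(r) + r\|\grad g\|_\infty)$.

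The hard part will be both the parameter tuning and, more importantly, the choice of comparison domain: using the shifted half-space $H_r = \{x_d > -2\delta r\}$ rather than the tangent half-space $\{x_d > 0\}$ is essential. With the tangent choice the common domain would be $\Omega \cap \{x_d > 0\} \cap B_r$, whose boundary includes $\{x_d = 0\} \cap \Omega \cap B_r$; the $w$ estimate there would involve $|u^\e(x', 0) - u^\e(x', f(x'))|$, i.e.\ the oscillation of $u^\e$ itself at scale $\delta r$, producing a circular bootstrap. With the shifted choice $H_r \supset \Omega \cap B_r$, the common domain is all of $\Omega \cap B_r$ and the only nontrivial boundary piece is $\partial\Omega$, where the discrepancy can be expressed purely through the $C^\beta$ seminorm of $v^\e_r$ (which is universally controlled). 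Given this, choosing first $\theta$ small (in terms of $\beta-\alpha$) so that $C_\beta \theta^\beta \leq \tfrac12 \theta^\alpha$ and then $\delta = \delta(\alpha)$ small so that $C_\beta \delta^\beta \leq \tfrac12 \theta^\alpha$ closes the iteration at exponent $\alpha$. The threshold $\delta(\alpha)$ degenerates as $\alpha \to 1$, consistent with the lemma.
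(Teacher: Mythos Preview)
Your strategy---a direct Campanato iteration via comparison with a flat half-space solution at each scale---is different from the paper's proof, which goes by compactness: the paper fixes $\theta$ using Lemma~\ref{lem: flat reg} at exponent $\alpha' > \alpha$, then argues by contradiction that if the $L^2$ decay $\|u^\e - g(0)\|_{L^2_{avg}(B_\theta \cap \Omega)} \le \theta^\alpha$ fails along a sequence with $\delta_k \to 0$, a subsequence converges (after flattening $\partial\Omega_k$ via $\Phi_k(x) = (x', x_d + f_k(x'))$ to pass the trace) to a solution in the genuine half-space, contradicting Lemma~\ref{lem: flat reg}.  This gives the one-step decay without ever constructing a comparison function.

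Your comparison scheme is a legitimate alternative in spirit, and the observation that one must shift the half-space to $H_r \supset \Omega \cap B_r$ is exactly right.  However, there is a genuine gap in the execution.  The bound $|w(x',f(x'))| \le (3\delta r)^\beta [v^\e_r]_{C^\beta}$ requires the $C^\beta$ seminorm of $v^\e_r$ on the full strip $\{-2\delta r \le x_d \le \delta r\} \cap B_r$, but Lemma~\ref{lem: flat reg} only delivers $[v^\e_r]_{C^\beta}$ on $H_r \cap B_{r/2}$: the data you impose on $\partial B_r \cap H_r$ is merely bounded (it is $u^\e$, for which no Lipschitz regularity is available a priori, so the phrase ``Lipschitz extension'' is already problematic), and no boundary regularity is available near $\partial B_r$.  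Consequently you have no control of $|w|$ on $\partial\Omega \cap (B_r \setminus B_{r/2})$ beyond the crude $O(E(r))$, and the maximum-principle step on $\Omega \cap B_r$ does not yield a small bound.  Shrinking to $\Omega \cap B_{r/2}$ does not help either, since then $w \ne 0$ on $\partial B_{r/2} \cap \Omega$.  Fixing this would require an additional localization ingredient (e.g.\ Poisson-kernel bounds on $\Omega \cap B_r$ showing that, for $x \in B_{\theta r}$, the contribution from the far boundary is $O(\theta)\cdot E(r)$), which is not provided and is itself nontrivial for systems in Lipschitz domains.  A secondary point: Lemma~\ref{lem: system max} is stated only for half-spaces and strips, so its invocation on $\Omega \cap B_r$ and $H_r \cap B_r$ needs separate justification.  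The paper's compactness argument sidesteps both issues entirely.
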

 The proof is by compactness, we postpone it to Appendix~\ref{sec: A}.
 
 \subsection{Poisson kernel in half-space intersection} From the regularity estimates of the previous subsection we can derive estimates on the Poisson Kernel in the intersection of nearby half-space domains.  Consider two unit vectors $n_1,n_2$ with $|n_1 - n_2| \sim \e$ small.  For simplicity we suppose that,
\[ n_j = (\cos \e) e_d +(-1)^{j}(\sin \e) e_1.\]  
Call
\[ K= P_{n_1} \cap P_{n_2}.\]
Call $G_K(x,y)$ to be the Green's matrix.  Although the domain is Lipschitz $G_K$ still satisfies the bound (via Avellaneda-Lin~\cite{Avellaneda:1987aa}), in $ d \geq 3$,
\[ |G_K(x,y)| \lesssim \frac{1}{|x-y|^{d-2}}.\]
We call $P_K(x,y)$, for $x \in K$ and $y \in \partial K$, to be the Poisson kernel for $K$, which is well-defined as long as $y_1 \neq 0$.  Call $\delta(x)= \textup{dist}(x,\partial K)$. 
\begin{lem}\label{lem: PK bounds 1}
For any $\alpha \in (0,1)$ and $\e$ sufficiently small depending on $\alpha$ and universal quantities,
\[ 
|P_K(x,y)| \lesssim_\alpha
\left\{
\begin{array}{lll}
\frac{\delta(x)^\alpha}{|x-y|^{d-1+\alpha}} & \hbox{ for } & |y_1| \geq \tfrac{1}{2}|x-y| \vspace{1.5mm}\\
\frac{1}{|y_1|}\frac{\delta(x)^\alpha}{|x-y|^{d-2+\alpha}} & \hbox{ for } & |y_1| \leq \tfrac{1}{2}|x-y|.
\end{array}\right.
\]
\end{lem}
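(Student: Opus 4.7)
The plan is to bound $P_K$ through the representation
\[P_K(x,y) = -n(y)\cdot A^t(y)\,\nabla_y G_K(x,y),\]
so the problem reduces to estimating $|\nabla_y G_K(x,y)|$. The key geometric observation is that if $y$ lies on the open face $\partial P_{n_j}\cap\partial K$, then the nearest point of $\partial K\setminus\partial P_{n_j}$ to $y$ lies on the singular edge $\Gamma=\partial P_{n_1}\cap\partial P_{n_2}$, at distance $|y_1|/\cos\e\sim|y_1|$ by a direct computation using the explicit form of $n_1,n_2$. Hence $\partial K$ coincides with the flat hyperplane $\partial P_{n_j}$ inside $B_{|y_1|/2}(y)$. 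Since $K$ is Lipschitz with Lipschitz constant $\sim\tan\e$, choosing $\e$ small depending on $\alpha$ ensures that Lemma~\ref{lem: bdry cont linear} applies on $\partial K$ with exponent $\alpha$.

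Set $r=|x-y|$ and $\rho=\min(r/8,\,|y_1|/4)$, so that $B_\rho(y)\cap\partial K\subset\partial P_{n_j}$ is flat and $B_\rho(y)$ is well separated from $x$. Since $y'\mapsto G_K(x,y')$ solves the adjoint system in $K\setminus\{x\}$ and vanishes on $\partial K$, rescaling the flat-boundary gradient estimate of Lemma~\ref{lem: flat reg} (with $g\equiv 0$) to scale $\rho$ gives
\[|\nabla_y G_K(x,y)|\;\lesssim\;\rho^{-1}\sup_{y'\in B_\rho(y)\cap K}|G_K(x,y')|.\]

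For the supremum I would exploit that $x\mapsto G_K(x,y')$ also vanishes on $\partial K$ and is a solution away from $y'$. Combining this with the global Lipschitz-domain bound $|G_K|\lesssim|x-y|^{-(d-2)}$ recalled just before the statement, and applying the $C^\alpha$ boundary estimate of Lemma~\ref{lem: bdry cont linear} at the point $x_0\in\partial K$ closest to $x$, on a ball of radius $r/2$ which avoids $y'$ (since $|x-y'|\geq r-\rho\geq r/2$), yields
\[|G_K(x,y')|\;\lesssim_\alpha\; \Big(\frac{\delta(x)}{r}\Big)^{\!\alpha}\frac{1}{r^{d-2}}\;=\;\frac{\delta(x)^\alpha}{r^{d-2+\alpha}}.\]
Inserting this into the previous display and distinguishing the regimes $\rho=r/8$ (when $|y_1|\geq r/2$) and $\rho=|y_1|/4$ (otherwise) produces exactly the two bounds in the statement.

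The main technical point is to justify the rescaling of Lemma~\ref{lem: flat reg} to an arbitrary scale $\rho$ while keeping the constant universal, and to upgrade its $L^2$-based hypothesis to the $L^\infty$ one I use; both reductions are standard via a Caccioppoli/Meyers argument and the doubling of $G_K$ away from $y'$. A secondary point is that the adjoint system $-\nabla\cdot(A^t\nabla\cdot)$ satisfies the same ellipticity and smoothness hypotheses as the original, so the boundary regularity lemmas apply verbatim to the $y$-variable.
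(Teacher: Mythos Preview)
Your proof is correct and takes essentially the same approach as the paper: first the boundary H\"older estimate (Lemma~\ref{lem: bdry cont linear}) in the $x$-variable to gain the factor $\delta(x)^\alpha/r^\alpha$ from the basic bound $|G_K|\lesssim r^{2-d}$, then the flat-boundary Lipschitz/gradient estimate (Lemma~\ref{lem: flat reg}) in the $y$-variable on a ball of radius $\sim\min(r,|y_1|)$ where $\partial K$ is a single hyperplane. The paper phrases the second step as a pointwise decay $|G_K(x,z)|\lesssim\delta(z)\,\delta(x)^\alpha/(\rho\,r^{d-2+\alpha})$ and then passes to $P_K$ by a difference quotient, whereas you apply the gradient bound to $\nabla_y G_K$ directly; these are equivalent formulations of the same estimate.
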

The proof is postponed to Appendix~\ref{sec: A}, we show how the estimates are used. Suppose $\psi : \partial K \to \real^N$ satisfies,
\[ |\psi(x)| \leq \min\{|x_1|,1\}.\]
We consider the Poisson kernel solution of the Dirichlet problem,
\[ u(x) = \int_{\partial K} P_K(x,y) \psi(y) \ dy.\]
In particular we are interested in the continuity at $0$, we only consider really $x = te_d$ for some $t>0$ (or $x = tn_1$ or $tn_2$ but this is basically the same) so we restrict to that case.  Now for $y \in \partial K$, $|x-y| \sim t + |y|$ and so $|x-y| \gtrsim |y_1|$ and the first bound in Lemma~\ref{lem: PK bounds 1} implies the second. Thus we can compute
\begin{align*} 
|u(te_d)| &\lesssim \int_{\partial K} \frac{1}{|y_1|}\frac{t^\alpha}{(t+|y|)^{d-2+\alpha}}\min\{|y_1|,1\} \ dy \\
&\lesssim \int_{ \partial K }\frac{t^\alpha}{(t+|y|)^{d-2+\alpha}}\min\{1,\frac{1}{|y_1|}\} \ dy \\
&\lesssim  \int_{\real}\int_{ \real^{d-2} }\min\{1,\frac{1}{|y_1|}\}\frac{t^\alpha}{(t+|y_1| +|z|)^{d-2+\alpha}} \ dzdy_1 
\end{align*}
Computing the inner integrals
\[\int_{ \real^{d-2} }\frac{1}{(t+|y_1| +|z|)^{d-2+\alpha}} \ dz = \frac{1}{(t+|y_1|)^{\alpha}} \int_{\real^{d-2} }\frac{1}{(1+|w|)^{d-2+\alpha}} \ dw \lesssim  \frac{1}{(t+|y_1|)^{\alpha}}. \]
Then
\[ u(te_d) \lesssim \int_{\real} \min\{1,\frac{1}{|y_1|}\}\frac{t^{\alpha}}{(t+|y_1|)^{\alpha}} dy_1 \lesssim t^\alpha \ \hbox{ for } \ t \leq 1. \]
We state the result of a slight generalization of this calculation as a Lemma.
\begin{lem}\label{lem: PK bounds 2}
Suppose that $K = P_{n_1} \cap P_{n_2}$, $\alpha \in (0,1)$ and $\e = |n_1 - n_2|$ is sufficiently small so that the estimates of Lemma~\ref{lem: PK bounds 1} hold, $\psi : \partial K \to \real$ smooth and satisfies the bound $|\psi(x)| \leq \min\{ \delta^\beta |x \cdot (n_1 - n_2)|^\beta,1\}$ for some $\delta >0$ and $1 \geq \beta >\alpha$,  Then for any bounded solution $u$ of
\[ - \grad \cdot( A(x) \grad u) = 0 \ \hbox{ in } \ K \ \ \hbox{ with } \ u = \psi \ \hbox{ on } \ \partial K. \]
it holds
\[ |u(te_d)| \lesssim \delta^\alpha t^\alpha \ \hbox{ for } \ t \leq 1/\delta.\]
\end{lem}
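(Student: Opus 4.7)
The plan is to follow the model computation displayed in the paragraphs immediately preceding the lemma, replacing the boundary bound $|\psi(y)| \leq \min\{|y_1|, 1\}$ by the more general $|\psi(y)| \leq \min\{A^\beta |y_1|^\beta, 1\}$, where $A := \delta |n_1 - n_2| \sim \delta \e$ after rotating coordinates so that $n_1 - n_2$ is parallel to $\pm e_1$. By uniqueness for bounded solutions in the wedge $K$ (which follows from Lemma~\ref{lem: system max} applied on the truncated strip $\Pi_{n_1}(0,R) \cap K$ together with a passage to the limit $R \to \infty$), the solution $u$ admits the Poisson-kernel representation $u(x) = \int_{\partial K} P_K(x,y)\psi(y)\,dy$. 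Specializing to $x = t e_d$ and observing that $|x-y| \gtrsim |y_1|$ for every $y \in \partial K$, the first alternative of Lemma~\ref{lem: PK bounds 1} dominates the second, exactly as in the motivating computation.

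Integrating out the $d-2$ transverse coordinates $z$ as in the passage preceding the lemma reduces the problem to bounding
\[
I := \int_{\real} \min\bigl\{A^\beta |y_1|^{\beta - 1},\, |y_1|^{-1}\bigr\}\,\frac{t^\alpha}{(t+|y_1|)^\alpha}\,dy_1.
\]
I would split this integral at the two natural scales $|y_1| = t$ and $|y_1| = 1/A$. On $[0,t]$ the factor $t^\alpha/(t+|y_1|)^\alpha$ is $O(1)$ so this piece contributes $\lesssim A^\beta t^\beta$. On $[t,1/A]$ the min is the first entry and the ratio becomes $(t/|y_1|)^\alpha$, giving
\[
A^\beta t^\alpha \int_t^{1/A} |y_1|^{\beta - 1 - \alpha}\,dy_1 \lesssim_\alpha A^\beta t^\alpha \cdot (1/A)^{\beta - \alpha} = A^\alpha t^\alpha,
\]
where the use of $\beta > \alpha$ controls the antiderivative by its value at the upper endpoint. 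The tail on $[1/A, \infty)$ is $\lesssim_\alpha t^\alpha \cdot A^\alpha$. Finally, $t \leq 1/\delta$ forces $At \leq \e \leq 1$, so $A^\beta t^\beta \leq A^\alpha t^\alpha$ and the three pieces combine to $I \lesssim_\alpha A^\alpha t^\alpha \leq \delta^\alpha t^\alpha$, using $|n_1 - n_2| \leq 1$ in the last inequality.

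The main obstacle is the tension between the vanishing rate of $\psi$ at the corner edge $\{y_1 = 0\}$ and the inverse singularity $1/|y_1|$ of the Poisson kernel there coming from Lemma~\ref{lem: PK bounds 1}. The hypothesis $\beta > \alpha$ is precisely what makes the weighted integral $\int |y_1|^{\beta - 1 - \alpha}\,dy_1$ converge on the scale range $[t, 1/A]$ without a logarithmic loss; were one to allow $\beta \leq \alpha$, the middle region would dominate and produce either an additional $\log(1/A)$ factor or a strictly worse power of $A$. A minor bookkeeping point is that $\partial K$ consists of two flat faces rather than one, and that the choice $x = te_d$ is slightly special, but the computation on the second face and at the points $x = tn_1$, $x = tn_2$ is identical after a harmless rotation, so no new ingredient is required.
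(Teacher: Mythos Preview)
Your argument is correct and follows the same strategy as the paper: represent $u$ by the Poisson kernel in $K$, invoke the bounds of Lemma~\ref{lem: PK bounds 1} (using $|x-y|\gtrsim|y_1|$ for $x=te_d$ so that the second alternative holds throughout), integrate out the transverse variables, and estimate the remaining one-dimensional integral using $\beta>\alpha$. The only organizational difference is that the paper first rescales $u(\cdot/\delta)$, exploiting that $K$ is a cone and that the Poisson-kernel bounds are uniform in the coefficient period, to reduce to the already-displayed computation with $\delta=1$; you instead carry the parameter $A=\delta|n_1-n_2|$ through the integral directly. The two routes amount to the same computation, and your explicit three-region split in fact yields the slightly sharper bound $A^\alpha t^\alpha=(\delta\e)^\alpha t^\alpha$, which the paper discards since only $\delta^\alpha t^\alpha$ is needed.
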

There is an additional subtlety which is the uniqueness of the bounded solution of the Dirichlet problem in $K$, the argument is the same as in the half-space case, see \cite{Gerard-Varet:2012aa}.  To derive Lemma~\ref{lem: PK bounds 2} from the previous calculation just do a rescaling to $u(\cdot/\delta)$, the domain $K$ is scaling invariant and the Poisson kernel associated with $A(\cdot/\delta)$ satisfies the same bounds as for $A$.

\section{Nonlinear Equations Background Results}\label{sec: nonlinear background}
In this section we consider the boundary layer problem for nonlinear operators.  To explain the assumptions we write out the problem in a general domain
\begin{equation}\label{e.nonlinearhomgen}
\left\{\begin{array}{ll}
- \nabla  \cdot a(\tfrac{x}{\e},\grad u^\e)  =0 
  &\text{ in } \Omega ,\vspace{1.5mm}\\
 u^\e(x)=g(x,\frac{x}{\e})  &\text{ on } \partial\Omega.
\end{array}
\right.
\end{equation}
This type of equation would arise as the Euler-Lagrange equation of a variational problem,
\[ \hbox{ minimize } \ E(u) = \int_{\Omega} F(\tfrac{x}{\e},Du) \ dx \ \hbox{ over } \ u \in H^1_0(\Omega) + g(\cdot,\tfrac{\cdot}{\e}).\]
 A natural uniform ellipticity assumption on the functional $F$ is
\[ \hbox{$F$ is convex with $1 \geq D^2F \geq \lambda>0$}.\]
Then $a = DF$ is $1$-Lipschitz continuous in $p$ and has the monotonicity property
\[   (a(x,p) - a(x,q))\cdot(p-q) \geq \lambda|p-q|^2 \ \hbox{ for all } \ p,q \in \real^d. \]

Now we consider how to determine the effective boundary conditions for the homogenization problem \eref{nonlinearhomgen}.  We zoom in at a boundary point $x_0 \in \partial \Omega$ defining,
\[ v^\e(y) = u^\e(x_0+\e y) \ \hbox{ which solves } \ 
\left\{
\begin{array}{ll}
- \grad \cdot a(y+\tfrac{x_0}{\e},\tfrac{1}{\e}Dv^\e) = 0 & \hbox{ in } \tfrac{1}{\e}(\Omega-x_0) \vspace{1.5mm}\\
v^\e(y) = g(x_0 + \e y,y+\tfrac{x_0}{\e})  & \hbox{ on } \tfrac{1}{\e}\partial(\Omega-x_0)
\end{array}\right.
\]
Now in order to have a unique equation in the limit $\e \to 0$ the following limit needs to exist,
\[ a_* (y,p) = \lim_{t \to 0} ta(y,t^{-1}p) \ \hbox{ for some } \ 1 \leq k < \infty.\]
 Note that, if said limit exists, it is always $1$-homogeneous in $p$,
\[ a_*(y,\lambda p) = \lim_{t \to 0} ta(y,(\lambda^{-1}t)^{-1} p) = \lambda a_*(y,p)\]
 In other words we need $a$ to be $1$-homogeneous in $p$ at $\infty$, then the operator $a_*$ is this limiting homogeneous profile of $a$ at $x_0$.   
 
 The above discussion motivates our assumption on the operators we study in the half-space problem.

\begin{enumerate}[(i)]
\item Periodicity:
\begin{equation}
a(x+z,p) = a(x,p) \ \hbox{ for all } \ x \in \mathbb{R}^d, z \in \mathbb{Z}^d, p \in \real^d.
\end{equation}
\item Ellipticity: for some $\lambda>0$ and all $p,q \in \mathbb{R}^{d}$
\begin{equation}
 (a(x,p) - a(x,q))\cdot(p-q) \geq \lambda|p-q|^2 \ \hbox{ and } \ |a(x,p - a(x,q)| \leq |p-q|.
 \end{equation}
 \item Positive Homogeneity: for all $x,p$ and $t>0$,
 \begin{equation}
a(x,tp) = ta(x,p)
  \end{equation}
\end{enumerate}
For convenience will also assume $a(x,p)$ is $C^1$ in $x$ so that, by the De Giorgi regularity theorem, solutions are locally $C^{1,\alpha}$ for some universal $\alpha >0$.
 
 \subsection{Regularity estimates for nonlinear equations}
 In this section we explain the regularity estimates which we use to obtain $(1)$ existence of boundary layer limits and $(2)$ the characterization of limits at rational directions.  For both results we need the De~Giorgi estimates respectively for the interior and boundary.  As is the usual approach for regularity of nonlinear equations, we can reduce to considering actually the regularity of linear equations but with only bounded measurable coefficients.  
 
 For what follows we will take $A : \real^d \to M_{d \times d}$ to be measurable and elliptic,
 \[ \lambda \leq A(x) \leq 1.\]
 Recall that results for bounded measurable coefficients imply results for solutions of nonlinear uniformly elliptic equations and for the difference of two solutions. If $u_1,u_2 \in H^{1}_{\textup{loc}}(\Omega)$ solve
 \[ - \grad \cdot a(x,\grad u_j) = 0 \ \hbox{ in } \Omega\]
 then $w = u_1 - u_2$ solves
 \begin{equation}\label{e.diffeqn}
  -\grad \cdot (A(x) \grad w) = 0 \ \hbox{ in } \Omega \ \hbox{ with } \ A(x) = \int_0^1 D_p a(x,s\grad u_1 + (1-s)\grad u_2) \ ds,
   \end{equation}
 and one can easily check that $\lambda\leq A(x) \leq 1$.  
 
 We remind that, despite the overlap of notation, the results in this section apply to solutions of scalar equations not systems.
 \begin{thm}[De Giorgi]\label{thm: degiorgi}
There is an $\alpha \in (0,1)$ and $C>0$ depending on $d,\lambda$ so that if $u$ solves,
\[ - \grad \cdot (A(x) \grad u) =0 \ \hbox{ in } \ B_1\]
then,
\[ [u]_{C^\alpha(B_{1/2})} \leq C\osc_{B_1} u\]
 \end{thm}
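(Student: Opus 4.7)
The plan is to follow De Giorgi's classical truncation approach: establish a Caccioppoli energy inequality, iterate it to get a quantitative local boundedness statement, then bootstrap these into an oscillation decay lemma which iterates to H\"{o}lder continuity. As a first step I would test the equation against $\eta^2 (u - k)_{\pm}$ for a smooth cutoff $\eta$ supported in $B_r$ and equal to $1$ on $B_{r'}$, and use the ellipticity $A \geq \lambda I$ to obtain the standard Caccioppoli bound
\[ \int_{B_{r'}} |\grad (u-k)_\pm|^2 \leq \frac{C}{(r-r')^2} \int_{B_r} (u - k)_\pm^2 \]
for every level $k \in \real$. Because the equation has no lower order terms and the right hand side is zero this is routine.

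Next I would run the De Giorgi iteration on a decreasing sequence of balls $B_{1/2 + 2^{-j}}$ and an increasing sequence of levels $k_j = k_\infty(1 - 2^{-j})$. Combining the Caccioppoli bound with the Sobolev embedding $H^1 \hookrightarrow L^{2d/(d-2)}$ (and its $d = 2$ analogue) shows that the quantity $A_j := \int_{B_{1/2 + 2^{-j}}}(u - k_j)_+^2$ satisfies a super-geometric recursion, hence $A_j \to 0$ provided $A_0$ is below a universal threshold $\delta_0(d,\lambda)$. This gives the quantitative local boundedness lemma: if $(u - M)_+$ has $L^2(B_1)$ norm less than $\delta_0$, then $u \leq M$ on $B_{1/2}$, with a corresponding lower statement.

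The main obstacle is the oscillation decay
\[ \osc_{B_{1/2}} u \leq \theta \, \osc_{B_1} u \ \hbox{ for some universal } \ \theta < 1. \]
After normalizing $\osc_{B_1} u \leq 1$ and possibly replacing $u$ by $-u$, I may assume $|\{u \leq 0\} \cap B_1| \geq \tfrac12 |B_1|$, and the goal is to upgrade this to $u \leq 1 - 2\eta$ on $B_{1/2}$ for some universal $\eta > 0$. The key tool is De Giorgi's intermediate-level-set lemma, an isoperimetric-type inequality for $H^1$ functions: if $u$ is $\leq 0$ on a set of definite measure and $\geq 1$ on another, then the intermediate set $\{0 < u < 1\}$ must have measure quantitatively bounded below in terms of $\|\grad u\|_{L^2}$. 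Applying this to the truncations $u_j := \min\{(u - (1 - 2^{-j}))_+, 2^{-j-1}\}$ and using the Caccioppoli bound to keep $\|\grad u_j\|_{L^2(B_{3/4})}$ under control, one concludes that $|\{u > 1 - 2^{-N}\} \cap B_{3/4}|$ can be made smaller than $\delta_0$ for some universal $N$. The local boundedness step then forces $u \leq 1 - 2^{-N-1}$ on $B_{1/2}$, giving the decay with $\theta = 1 - 2^{-N-1}$.

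Finally, iterating the oscillation decay on the geometric sequence $B_{2^{-k}}(x_0)$ about any $x_0 \in B_{1/2}$ yields $\osc_{B_{2^{-k}}(x_0)} u \leq \theta^k \osc_{B_1} u$, which by the standard dyadic-to-continuous comparison is equivalent to the stated H\"{o}lder estimate with exponent $\alpha = -\log_2 \theta \in (0,1)$ and a constant $C$ depending only on $d$ and $\lambda$.
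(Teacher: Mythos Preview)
Your outline is a correct sketch of the classical De Giorgi argument and would yield the stated estimate. Note however that the paper does not actually prove Theorem~\ref{thm: degiorgi}: it is stated as a background result attributed to De Giorgi, with no proof given, so there is no ``paper's own proof'' to compare against beyond the implicit reference to the standard approach you describe.
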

 A similar result holds up to the boundary for regular domains.  We say that $\Omega$ is a regular domain of $\real^d$ if there are $r_0,\mu>0$ so that for every $x \in \partial \Omega$ and every $0<r < r_0$,
 \[ |\Omega^C \cap B_{r}(x)| \geq \mu |B_r|.\]
  \begin{lem}\label{lem: bdry cont nonlinear}
Suppose that $\Omega$ is a regular domain, $r_0 \geq 1$ and $0 \in \partial \Omega$, and $\varphi \in C^{\beta}$.  There is an $\alpha_0(d,\lambda,\mu) \in (0,1)$ such that for $0<\alpha < \min\{\alpha_0,\beta\}$ there is $C(d,\lambda,\mu,\alpha)>0$ so that if $u$ solves,
\[ - \grad \cdot (A(x) \grad u) =0 \ \hbox{ in } \ B_1 \cap \Omega , \ \hbox{ with } \ u = \varphi \ \hbox{ on } \ \partial \Omega\]
then for every $r \leq 1$,
\[ \osc_{B_r} u \leq C([ \varphi]_{C^{\beta}(B_1)}+\osc_{B_1} u)r^\alpha\]
 \end{lem}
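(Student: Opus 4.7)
The plan is to reduce the Hölder estimate to a geometric oscillation decay on dyadic balls, in the spirit of a boundary De~Giorgi iteration. Set $a_k := \osc_{B_{2^{-k}} \cap \Omega} u$ and $b_k := \osc_{\partial\Omega \cap B_{2^{-k}}} \varphi$, so that $b_k \leq [\varphi]_{C^\beta} 2^{-k\beta}$. The key step is to produce a universal $\theta = \theta(d,\lambda,\mu) \in (0,1)$ such that
\[
a_{k+1} \leq \theta\, a_k + (1-\theta)\, b_k.
\]
Given this recursion, a simple induction shows that for any $\alpha < \min(\log_2(1/\theta), \beta)$,
\[
a_k \lesssim_\alpha ([\varphi]_{C^\beta} + \osc_{B_1} u)\, 2^{-k\alpha},
\]
which yields the Hölder estimate at all dyadic scales (and hence at all scales) with $\alpha_0 := \log_2(1/\theta)$.

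The one-step decay follows, by applying the same bound to $u$ and $-u$ and subtracting, from a one-sided estimate: if $u \leq M$ on $\partial\Omega \cap B_1$ and $u \leq M'$ in $\Omega \cap B_1$, then
\[
\sup_{\Omega \cap B_{1/2}} u \leq (1-\theta) M + \theta M'.
\]
To prove this I would work with the truncation $w := (u-M)_+$. Because the equation is linear, scalar, with bounded measurable coefficients, $w$ is a nonnegative subsolution on $B_1 \cap \Omega$ vanishing in the trace sense on $\partial\Omega \cap B_1$; extending $w$ by zero to $B_1 \setminus \Omega$ produces $\widetilde w \in H^1(B_1)$ which remains a nonnegative subsolution of $-\nabla\cdot(A\nabla \widetilde w) \leq 0$ on the full ball $B_1$ (a standard consequence of the zero trace). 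The exterior density assumption then gives
\[
|\{\widetilde w = 0\} \cap B_r| \;\geq\; |\Omega^c \cap B_r| \;\geq\; \mu |B_r| \quad \text{for every } r \leq r_0,
\]
so the classical De~Giorgi lemma for nonnegative subsolutions vanishing on a set of positive relative measure yields $\sup_{B_{1/2}} \widetilde w \leq (1-\theta) \sup_{B_1} \widetilde w \leq (1-\theta)(M'-M)$ for a universal $\theta \in (0,1)$, which rearranges to the claimed one-sided bound.

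The passage to arbitrary scale is routine: rescaling $v(x) := u(2^{-k}x)$ transforms the problem on $B_{2^{-k}} \cap \Omega$ into one on $B_1 \cap 2^k\Omega$, with coefficients of the same structural form and with the exterior density condition preserved at $0$ up to scale $1$ (this is where the hypothesis $r_0 \geq 1$ enters). Applying the lemma of the previous paragraph to $v$ and the rescaled boundary datum, then undoing the scaling, gives $a_{k+1} \leq \theta a_k + (1-\theta) b_k$, closing the argument. The real substance here is the classical boundary De~Giorgi lemma, which is standard and would simply be invoked; everything specific to our setting is the use of the exterior density condition to furnish the required zero set for $\widetilde w$. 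The expected main obstacle, and the reason for the restriction to scalar equations, is that the truncation $(u-M)_+$ is only a subsolution in the scalar case, so this strategy does not extend to the systems setting of Section~\ref{sec: linear background}.
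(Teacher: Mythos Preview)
Your proposal is correct and follows essentially the same approach as the paper: truncate at the boundary maximum to produce a nonnegative subsolution, use the exterior density to guarantee a set of positive measure where the truncation vanishes, apply the De~Giorgi weak Harnack inequality to get a one-step oscillation decay of the form $\osc_{B_{1/2}} u \leq (1-\delta)\osc_{B_1} u + \delta\,\osc_{\partial\Omega\cap B_1}\varphi$, and then iterate dyadically. The only cosmetic differences are that the paper writes out the geometric sum explicitly rather than citing an induction, and there is a harmless swap of $\theta$ and $1-\theta$ in your one-sided bound (your De~Giorgi step actually yields $\sup_{\Omega\cap B_{1/2}} u \leq \theta M + (1-\theta)M'$), which does not affect the argument.
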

 The proof is postponed to Appendix~\ref{sec: A}.  We make a remark on the optimality of this estimate.  Using these results one can show local $C^{1,\alpha}$ estimates for solutions of non-linear uniformly elliptic equations.  Large scale $C^{1,\alpha}$ estimates are not possible due to the $x$-dependence, but in the spirit of Avellaneda-Lin~\cite{Avellaneda:1991aa} one can likely prove large scale Lipschitz estimates.  See Armstrong-Smart~\cite{Armstrong:2016aa} for the (more difficult) stochastic case, we are not aware of a citation for the periodic case.  These estimates however are for \emph{solutions}, we seem to require the result of Lemma~\ref{lem: bdry cont nonlinear} for \emph{differences} of solutions (i.e. basically it is a $C^\alpha$ estimate of a derivative).  It is not clear, therefore, whether we can do better than Lemma~\ref{lem: bdry cont nonlinear}.

  \subsection{Half-space problem}  We consider the basic well-posedness results for nonlinear problems set in half-spaces.  Consider
    \begin{equation}\label{hsnonlinear}
  \left\{
 \begin{array}{ll}
 - \grad \cdot a(x,Du) = 0 & \hbox{ in } P_n \vspace{1.5mm}\\
u = \varphi(x) & \hbox{ on } \partial P_n.
 \end{array}
 \right.
 \end{equation}
 Then the maximum principle holds. 
 \begin{lem}\label{lem: hs comparison nonlinear}
Suppose $u_1$ and $u_2$ are respectively bounded subsolutions and supersolutions of \eqref{hsnonlinear} with boundary data $\varphi_1 \leq \varphi_2$ on $\partial P_n$, then,
\[ u_1 \leq u_2 \ \hbox{ in } \ P_n.\]
 \end{lem}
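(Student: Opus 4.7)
Set $w := u_1 - u_2 \in H^1_{\mathrm{loc}}(P_n) \cap L^\infty(P_n)$. Since $p \mapsto a(x,p)$ is Lipschitz, Rademacher's theorem gives the a.e.\ well-defined matrix field
\[ A(x) := \int_0^1 D_p a\bigl(x,\, s\nabla u_1(x) + (1-s)\nabla u_2(x)\bigr)\, ds, \]
and the identity $a(x,\nabla u_1) - a(x,\nabla u_2) = A(x)(\nabla u_1 - \nabla u_2)$ holds a.e. The monotonicity hypothesis yields $A(x)\xi\cdot\xi \geq \lambda|\xi|^2$ and the Lipschitz bound yields $|A(x)| \leq 1$. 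Subtracting the weak sub/super solution inequalities shows that $w$ is a weak subsolution of the \emph{linear} equation $-\nabla \cdot(A \nabla w) \leq 0$ in $P_n$ with $w \leq 0$ on $\partial P_n$, so the claim reduces to a Phragm\'{e}n--Lindel\"{o}f principle for bounded subsolutions of a linear divergence operator with bounded measurable elliptic coefficients in a half-space.

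\textbf{Energy estimate.} Testing against $\eta_R^2 w_+$ for a standard cutoff $\eta_R$ for $B_{2R}$, using Stampacchia's rule $\nabla w_+ = \mathbf 1_{\{w>0\}}\nabla w$, ellipticity, and Young's inequality, yields the Caccioppoli bound
\[ \int_{P_n \cap B_R} |\nabla w_+|^2 \lesssim R^{d-2} \|w\|_\infty^2. \]
In $d = 2$ this shows $w_+ \in H^1_0(P_n)$; testing the subsolution inequality with $w_+$ directly gives $\int|\nabla w_+|^2 = 0$, hence $w_+ \equiv 0$, finishing this case.

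\textbf{Blow-up at infinity for $d \geq 3$.} Assume toward contradiction that $M := \sup_{P_n} w_+ > 0$ and pick $x_k \in P_n$ with $w(x_k) \to M$. Interior and boundary De~Giorgi--Nash continuity together with the strong maximum principle for linear divergence subsolutions (Moser's Harnack applied to the nonnegative supersolution $M - w$) rule out any compact accumulation point of $\{x_k\}$ in $\overline{P_n}$, so $|x_k| \to \infty$. Rescale by $R_k := |x_k|$: the functions $\tilde w_k(y) := w(R_k y)$ are bounded subsolutions of $-\nabla \cdot(A_k \nabla \tilde w_k) \leq 0$ with $A_k(y) = A(R_k y)$ of the same ellipticity, $\tilde w_k \leq 0$ on $\partial P_n$, and $\tilde w_k(y_k) \to M$ for $y_k := x_k/R_k \in S^{d-1} \cap \overline{P_n}$. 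Interior De~Giorgi and boundary De~Giorgi (the latter exploiting $\tilde w_k \leq 0$ on $\partial P_n$ as a zero-sided barrier) give equi-H\"{o}lder regularity uniform in $k$ up to $\partial P_n$; along a subsequence, $\tilde w_k \to w_\infty$ locally uniformly on $\overline{P_n}$ and $y_k \to y^* \in S^{d-1} \cap \overline{P_n}$ with $w_\infty(y^*) = M$ and $w_\infty \leq 0$ on $\partial P_n$. If $y^* \in \partial P_n$, boundary H\"{o}lder continuity of the limit forces $w_\infty(y^*) = 0 < M$, contradiction. If $y^* \in P_n$, H-convergence of $\{A_k\}$ along a further subsequence makes $w_\infty$ a subsolution of a limit linear divergence equation attaining the positive interior maximum $M$ while vanishing on $\partial P_n$, violating the strong maximum principle.

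\textbf{Main obstacle.} The delicate step is the blow-up: one needs both equi-H\"{o}lder regularity of the family $\{\tilde w_k\}$ up to $\partial P_n$ (boundary De~Giorgi for subsolutions with a zero-sided Dirichlet condition) and H-compactness of the rescaled coefficient family $\{A_k\}$ to pass to a subsolution of a limit PDE; these are classical tools for linear divergence-form elliptic equations with bounded measurable coefficients but must be invoked carefully in this unbounded half-space setting.
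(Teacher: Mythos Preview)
Your reduction to a linear subsolution is correct, and the $d=2$ energy argument works (modulo the imprecise claim that $w_+\in H^1_0(P_n)$---you only obtain $\nabla w_+\in L^2$, and must then re-run the cutoff estimate to see the cross term vanishes as $R\to\infty$). The blow-up route for $d\ge 3$, however, has a genuine soft spot: H-convergence is a statement about \emph{solutions} with fixed or $H^{-1}$-strongly-convergent right-hand sides, and it is not standard that a locally uniform limit of \emph{subsolutions} of $-\nabla\cdot(A_k\nabla\,\cdot\,)$ is a subsolution of the H-limit operator. This can be repaired by bypassing the limit equation entirely: the weak Harnack inequality for the nonnegative supersolutions $M-\tilde w_k$ holds with constants depending only on the ellipticity ratio (hence uniform in $k$), so a chain-of-balls argument gives $w_\infty\equiv M$ on $P_n$ directly, after which the uniform one-sided boundary decay $(\tilde w_k)_+(x)\lesssim d(x,\partial P_n)^\alpha$ produces the contradiction.

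The paper's argument is a one-line barrier estimate and avoids all of this. Fix $x_0\in P_n$ with $x_0\cdot n=r$, project to $\bar x_0\in\partial P_n$, and for $R\gg r$ compare $w$ in the bounded domain $B_R(\bar x_0)\cap P_n$ with the \emph{solution} $\Phi_R$ of the same linear equation having boundary values $0$ on $\partial P_n\cap B_R$ and $\sup|w|$ on the spherical cap; the maximum principle in bounded domains gives $w\le\Phi_R$. Since $\Phi_R$ is a genuine solution, Lemma~\ref{lem: bdry cont nonlinear} at scale $R$ yields $\Phi_R(x_0)\lesssim(\sup|w|)\,(r/R)^\alpha$, and sending $R\to\infty$ finishes. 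Your compactness machinery is a legitimate alternative, but here the direct barrier is both shorter and free of the subsolution-limit issue.
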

\noindent The result follows from the maximum principle in bounded domains and Lemma~\ref{lem: bdry cont nonlinear}.

\section{Boundary layers limits}\label{sec: boundary layers}
In this section we will discuss the boundary layer problem for divergence form elliptic problems in rational and irrational half-spaces.  The results that we need for this paper are valid for both nonlinear scalar equations and linear systems and the proofs have only minor differences.  For that reason, in this section and the next, we will discuss both types of equations in a unified way.  We use the nonlinear notation for the PDE.  We consider the cell problem,
 \begin{equation}\label{e.cellbll}
  \left\{
 \begin{array}{ll}
 - \grad \cdot a(y,\grad v^s_n) = 0 & \hbox{ in } P_n^s \vspace{1.5mm}\\
 v^s_n = \varphi(y) & \hbox{ on } \partial P_n^s.
 \end{array}
 \right.
 \end{equation}
  We will first consider the case when $n \in S^{d-1} \setminus \real \integer^d$ is irrational.
  \subsection{Irrational half-spaces}  For linear systems the problem \eref{cellbll} in irrational half-spaces has been much studied \cite{Gerard-Varet:2012aa,Gerard-Varet:2011aa,Aleksanyan:2015aa,Aleksanyan:2017aa,Armstrong:2017aa,Prange:2013aa,Shen:2017aa}.  Typically the focus has been on the Diophantine irrational directions.  We do not give the definition, since it is not needed for our work, but basically the Diophantine condition is a quantification of the irrationality.  Under this assumption strong quantitative results can be derived for the convergence to the boundary layer limit.  
  
  For the purposes of this paper we are only interested in the qualitative result, the existence of a boundary layer limit for \eref{cellbll} in a generic irrational half-space (no Diophantine assumption).  The existence of a boundary layer tail in general irrational half-spaces was originally proven by Prange \cite{Prange:2013aa} for divergence form linear systems, and for nonlinear non-divergence form equations by the first author in \cite{Feldman:2015aa} (following the work of Choi-Kim \cite{Choi:2014aa} on the Neumann problem).  To our knowledge the case of nonlinear divergence form equations has not been studied yet.  
  
  What we would like to explain here is that the proof of \cite{Feldman:2015aa} applies also to the problems we consider in this paper, careful inspection shows that the proof of \cite{Feldman:2015aa} only required the interior regularity, continuity up to the boundary (small scale), and the $L^\infty$ estimate (or maximum principle) w.r.t. the boundary data.
  
  \begin{thm}
  Suppose that $n \in S^{d-1} \setminus \real \integer^d$.  Then there exists $\varphi_*(n)$ such that,
  \[ \sup_{s} \sup_{y \in \partial P_n} |v^s_n(y+Rn) - \varphi_*(n)| \to 0 \ \hbox{ as } \ R \to \infty.\]
  \end{thm}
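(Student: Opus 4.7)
The existence of the boundary layer limit for a generic irrational half-space follows the argument of the first author and Kim~\cite{Feldman:2015aa}, which was written for nonlinear non-divergence scalar equations but uses only qualitative PDE ingredients. A careful inspection reveals that the ingredients needed are exactly: (a) interior H\"{o}lder regularity of solutions, (b) continuity up to the boundary at small scales, and (c) an $L^\infty$ stability estimate of the Dirichlet problem with respect to boundary data. All three have now been established for both divergence form linear systems and nonlinear scalar equations in Sections~\ref{sec: linear background} and~\ref{sec: nonlinear background}, so the plan is to transplant the scheme of~\cite{Feldman:2015aa} with only the regularity and stability inputs swapped out.

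The strategy proceeds as follows. By the $L^\infty$ estimate (Lemma~\ref{lem: system max} for systems, Lemma~\ref{lem: hs comparison nonlinear} in the nonlinear case) the family $\{v_n^s\}_{s\in\real}$ is uniformly bounded by $\|\varphi\|_{L^\infty}$. Interior H\"{o}lder regularity (Theorem~\ref{thm: degiorgi} or Avellaneda--Lin) then gives equicontinuity of the translated tails $u_{R,s,y}(\cdot) := v_n^s(y + R n + \cdot)$ on any fixed compact set, for all $R$ sufficiently large. Hence along any sequence $R_k \to \infty$ and any parameters $(s_k, y_k)$ one can extract a locally uniformly convergent subsequence whose limit $u_\infty$ is an entire solution of the PDE on $\real^d$. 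The goal is to show that every such limit is a single constant $\varphi_*(n)$, with convergence uniform in $(s,y)$.

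The mechanism that forces the limit to be constant is equidistribution. For $z \in \integer^d$, the lattice periodicity of $a$ and $\varphi$ yields the identity $v_n^{s+z\cdot n}(y) = v_n^{s}(y - z)$, so shifting the base point by a lattice vector is equivalent to shifting the translation parameter $s$ by $z \cdot n$. Since $n$ is irrational, Weyl's theorem gives density of $\{z\cdot n \mod 1 : z \in \integer^d\}$ in $[0,1)$. Suppose for contradiction that two subsequential tail limits $u^\pm$ arise from parameters $(s_k^\pm, y_k^\pm)$ with $u^+(0) - u^-(0) \geq \delta > 0$. Using Weyl equidistribution one selects $z_k \in \integer^d$ with $z_k \cdot n \to s^- - s^+ \pmod 1$; the small-scale boundary continuity (Lemma~\ref{lem: bdry cont linear} or Lemma~\ref{lem: bdry cont nonlinear}) controls the perturbation of $v_n^{s_k^+ + z_k\cdot n}$ versus $v_n^{s_k^-}$ on a compact boundary window; the $L^\infty$ stability then propagates this control into the interior and down to the evaluation points. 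Taking limits contradicts $\delta > 0$, and a similar argument with $z_k \cdot n \equiv 0 \pmod 1$ but $z_k$ ranging over the hyperplane direction rules out dependence on $y$.

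The main obstacle, and the technical heart of~\cite{Feldman:2015aa} which one needs to reproduce here, is the comparison step: the correcting lattice translation $z_k$ may have norm much larger than $R_k$, so one cannot naively apply $L^\infty$ stability on the full half-space. The resolution is a two-scale argument that first freezes the data on a large but finite window centered near the evaluation point, uses small-scale boundary continuity inside that window to bound the difference of boundary values (arising from the equidistribution approximation error, which can be made arbitrarily small), and then invokes the $L^\infty$ stability estimate for the half-space problem localized to that window to transfer the boundary bound to the evaluation point. No essentially new ideas are needed beyond~\cite{Feldman:2015aa}; the adaptations amount to inserting the divergence form/systems regularity and stability estimates of Sections~\ref{sec: linear background}--\ref{sec: nonlinear background} in place of their non-divergence counterparts.
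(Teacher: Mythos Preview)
Your high-level claim---that the argument of \cite{Feldman:2015aa} transplants once one has interior H\"older regularity, small-scale boundary continuity, and $L^\infty$ stability---is correct and is exactly the paper's position. However, the concrete mechanism you outline (compactness of tails, subsequential entire limits, Weyl equidistribution plus contradiction) is not the argument the paper sketches, and your version carries a misconception about where the difficulty lies.

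The paper's proof is a direct oscillation-decay estimate, not a compactness/contradiction scheme. The key device is an almost-periodicity lemma: for each $N\ge 1$ there is a modulus $\omega_n(N)\to 0$ (irrationality of $n$) such that every $y\in\partial P_n$ admits a lattice vector $z\in\integer^d$ with $|z-y|\le N$ and $|z\cdot n|\le \omega_n(N)$. Since $v_n^0(\cdot+z)$ solves the same equation in $P_n^{z\cdot n}$, boundary continuity plus the \emph{full half-space} $L^\infty$ estimate give $\|v_n^0-v_n^0(\cdot+z)\|_{L^\infty}\lesssim \|\nabla\varphi\|_\infty\,\omega_n(N)^\alpha$. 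Thus the oscillation of $v_n^0$ on the slice $\partial P_n+Rn$ is controlled by its oscillation over a single ball of radius $N$ plus $\omega_n(N)^\alpha$; interior H\"older regularity then bounds the ball oscillation by $(N/R)^\alpha$. Choosing $N$ large and then $R\gg N$ finishes. The reduction from general $s$ to $s=0$ uses the same almost-periodicity.

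Two points in your write-up deserve correction. First, there is no need for any ``localized window'' version of the stability estimate: translation by a lattice vector preserves the operator, so the global half-space $L^\infty$ bound applies directly to $v_n^0-v_n^0(\cdot+z)$. Second, your worry that the correcting lattice vector may have norm $\gg R$ is based on the wrong picture: in the actual argument $|z|\le N$ is fixed first and $R$ is sent to infinity afterward, so the scales are ordered the opposite way.
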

    One consequence of this theorem is that, for irrational directions, we can just study $v_n = v^0_n$.  We give a sketch of the proof following \cite{Feldman:2014aa}.
  \begin{proof}(Sketch) The boundary data, and hence the solution $v^s_n$ as well, satisfies an almost periodicity property in the directions parallel to $\partial P_n$.  Given $N \geq 1$ there is a modulus $\omega_n(N) \to 0$ as $N \to \infty$ (uses $n$ irrational) so that for any $y \in \partial P_n$ there is a lattice vector $z \in \integer^d$ with $|z-y| \leq N$ and $|z \cdot n - s| \leq \omega(N)$, see \cite{Feldman:2015aa} Lemma 2.3.  
  
  Since $v_n^0(\cdot + z)$ solves the same equation in $P_n^{z \cdot n}$ we can use the up to the boundary H\"{o}lder continuity and the $L^\infty$ estimate (or maximum principle) to see that 
  \[ \|v^s_n(\cdot) - v^0_n(\cdot + z)\|_{L^\infty(P_n^s \cap P_n^{z \cdot n})} \lesssim \| \grad \varphi\|_\infty \omega_n(N)^\alpha. \]
  Sending $N$ to $\infty$ we see that if $v^0_n$ has a boundary layer limit then so does $v^s_n$ and they have the same value.
  
  Then we just need to argue for $v^0_n$. Given $y \in \partial P_n$ the same argument as above shows there is $\bar{z} \in \partial P_n$ with $|\bar{z}-y| \leq N$ and
  \[ |v^0_n(\cdot) - v^0_n(\cdot + \bar{z})| \lesssim \| \grad \varphi\|_\infty \omega_n(N)^\alpha.\] 
  Then using the $L^\infty$ estimate Lemma~\ref{lem: system max} (or the maximum principle) and the large scale interior regularity estimates, Theorem~\ref{thm: degiorgi} above for the nonlinear case or Lemma 9 in \cite{Avellaneda:1987aa} for the linear systems case,
  \begin{align*}
    \osc_{y \cdot n \geq R} v^s_n(y) &\lesssim \osc_{y \cdot n = R} v^s_n(y) \\
    &\leq \osc_{y \in B_N(0) \cap \partial P_n} v^s_n(y+Rn) + C\| \grad \varphi\|_\infty \omega_n(N)^\alpha \\
    &\lesssim  \| \grad \varphi\|_{\infty}( (N/R)^\alpha + \omega_n(N)^\alpha).
    \end{align*}
  Choosing $N$ large first to make $\omega_n(N)$ small and then $R \gg N$ gets the existence of a boundary layer limit.  
  \end{proof}

  \subsection{Rational half-spaces} Next we consider the case of a rational half-space.  Let $\xi \in \Z^d \setminus \{0\}$ be an irreducible lattice direction, and $v^s_\xi$ be the corresponding half-space problem solution.  In this case $\varphi$ is periodic with respect to a $d-1$-dimensional lattice parallel to $\partial P_\xi$.  There exist $\ell_1,\dots,\ell_{d-1}$ with $\ell_j \perp \xi$ and $|\ell_j| \leq |\xi|$ which are periods of $\varphi$.  Then by uniqueness $\ell_j$ are also periods of $v^s_\xi$.  In this special situation it is possible to show that there is a boundary layer limit with an exponential rate of convergence.
  
  We give a general set-up.  We consider the half-space problem,
   \begin{equation}
  \left\{
 \begin{array}{ll}
 - \grad \cdot a(x, \grad v) = \grad \cdot f & \hbox{ in } \real^d_+ \vspace{1.5mm}\\
 v = \psi(x') & \hbox{ on } \partial \real^d_+.
 \end{array}
 \right.
 \end{equation}
 where $\psi : \partial \real^d_+ \to \real$ and $f$ are smooth, and $\psi$, $f$, and $a(\cdot,p)$ all share $d-1$ linearly independent periods $\ell_1,\dots, \ell_{d-1} \in \partial \real^d_+$ such that,
 \[ \max_{1 \leq j \leq d-1} |\ell_j| \leq M.\]
 The operators $a$, as always, will also satisfy the assumptions of either Section~\ref{sec: linear background} or Section~\ref{sec: nonlinear background}. For now we will take $f=0$, this covers most of the situations we will run into in this paper.  Then $v$ has a boundary layer limit with exponential rate of convergence.
  \begin{lem}\label{lem: exp tail nonlinear}
  There exists a value $c_*(\psi)$ such that, 
  \[ \sup_{y \in \partial P_n} |v(y+Re_d) - c_*| \leq C(\osc \psi) e^{-cR/M},\]
  with $C,c>0$ depending only on $\lambda ,d$.
  \end{lem}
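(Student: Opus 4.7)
The plan is to prove exponential decay by a Saint--Venant-type energy estimate, which gives a rate depending only on the cross-sectional size. First, by the rescaling $y \mapsto y/M$ (which preserves the equation class using positive $1$-homogeneity of $a$ in the nonlinear case and linearity in the linear-systems case), I reduce to $M = 1$, so the tangent-period torus $\mathbb{T}^{d-1}$ has universal diameter. The $L^\infty$ estimate in Lemma~\ref{lem: system max} (or the maximum principle for the nonlinear case) gives $\|v - c_0\|_{L^\infty} \lesssim \osc \psi$ for a suitable constant $c_0$ which I subtract from $v$.

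The heart of the argument is a Saint--Venant identity for $F(t) := \int_t^\infty \int_{\mathbb{T}^{d-1}} a(y,\nabla v) \cdot \nabla v \, dy$. Testing the weak form of the equation against $\chi(y_d)(v - \bar v(t))$, with $\chi$ an approximation of $\mathbf{1}_{\{y_d > t\}}$ and $\bar v(t)$ the tangential mean of $v$ on $\{y_d = t\}$, and using tangential periodicity to eliminate lateral boundary terms, one obtains
\[
F(t) = \int_{\{y_d = t\}} (v - \bar v(t)) \, a(y,\nabla v) \cdot e_d \, dy'.
\]
Cauchy--Schwarz, Poincar\'e--Wirtinger on the cross-section (with universal constant since $\mathrm{diam}\,\mathbb{T}^{d-1} \leq 1$), the upper bound $|a(y,\nabla v)| \leq |\nabla v|$, and the ellipticity lower bound $a(y,\nabla v) \cdot \nabla v \geq \lambda |\nabla v|^2$ combine to give $F(t) \lesssim -F'(t)$. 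Gronwall then yields $F(t) \leq F(0) \, e^{-ct}$ with $c$ depending only on $d, \lambda$. To legitimize the identity, and in particular the vanishing of the boundary term at $y_d = \infty$ and the finiteness of $F(0)$, I would first run the argument on truncated tubes $[0,T] \times \mathbb{T}^{d-1}$ with top boundary data equal to the tangential average of $v$ on $\{y_d = T\}$ so that $F_T(0) \lesssim (\osc \psi)^2$ uniformly in $T$, and pass to $T \to \infty$ using uniqueness of bounded half-space solutions (Section~\ref{sec: linear background} and Section~\ref{sec: nonlinear background}).

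To convert $L^2$-decay of $\nabla v$ into $L^\infty$-decay of $v - c_*$, I apply Poincar\'e--Wirtinger on each slab $[t, t+1] \times \mathbb{T}^{d-1}$ followed by the interior $L^2 \to L^\infty$ and $C^\alpha$ estimates (Lemma~\ref{lem: flat reg} / Avellaneda--Lin in the systems case, De~Giorgi--Nash in the scalar case). This produces $\osc_{y_d \geq t} v \lesssim (\osc \psi) \, e^{-ct}$, and a Cauchy argument in $t$ yields a constant $c_*$ with $\|v - c_*\|_{L^\infty(\{y_d \geq t\})} \lesssim (\osc \psi) \, e^{-ct}$. Undoing the rescaling $y \mapsto y/M$ replaces $c$ by $c/M$, giving the stated bound. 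The main obstacle I anticipate is controlling $F(0)$ purely in terms of $\osc \psi$ rather than the full $H^{1/2}$ norm of $\psi$: this is what forces the finite-tube truncation, together with an $L^\infty$-based Caccioppoli estimate, to bound the Dirichlet energy of $v_T$ uniformly in $T$ by a universal multiple of $(\osc \psi)^2$.
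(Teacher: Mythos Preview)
Your Saint--Venant approach is correct and will prove the lemma, but it is genuinely different from the paper's argument. The paper proceeds by an $L^\infty$ oscillation iteration: using tangential periodicity, the oscillation of $v$ on the slice $\{y_d = LM\}$ equals its oscillation on a single period cell, and the large-scale interior H\"older estimate (De~Giorgi in the scalar case, Avellaneda--Lin for systems) applied in a ball of radius $\sim LM$ gives
\[
\osc_{\{y_d = LM\}} v \leq C L^{-\alpha} \osc_{P_n} v \leq C L^{-\alpha} \osc \psi,
\]
the last step by the maximum principle or Lemma~\ref{lem: system max}. Choosing $L$ universal so that $CL^{-\alpha} \leq \tfrac12$ and iterating yields the exponential decay. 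No energy identity or Poincar\'e inequality is used.

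Your route trades this for the Saint--Venant energy-decay machinery, which is more work but has its own merits: the rate $c$ you obtain depends explicitly on $\lambda$ and the Poincar\'e constant of the cross-section rather than on the H\"older exponent coming from De~Giorgi/Avellaneda--Lin, and the argument is closer in spirit to classical cylinder-decay results. The obstacle you flag is real but tractable: rather than bounding $F(0)$, bound $F(1)$ directly on the half-space by testing against $\zeta^2(v-c_0)$ with a cutoff $\zeta$ vanishing near $y_d=0$ and near $y_d = 2T$ (so the test function is in $H^1_0$), then let $T\to\infty$; the Caccioppoli-type computation gives $F(1) \lesssim (\osc\psi)^2$ with no dependence on $\|\psi\|_{H^{1/2}}$. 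This avoids the mild circularity in your truncation scheme (where the top data is defined via the very solution $v$ whose convergence you want). Either way, the conversion from $L^2$ energy decay to the stated $L^\infty$ bound via interior regularity and periodicity is fine.
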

The proof of this result is the same as the proof of the analogous result, Lemma 3.1, in \cite{Feldman:2015aa}, so we only include a sketch.  The only tools necessary are the maximum principle (or $L^\infty$ estimate Lemma~\ref{lem: system max}) and the large scale interior H\"{o}lder estimates via De Giorgi-Nash-Moser for nonlinear equations (Theorem~\ref{thm: degiorgi} here) or Avellaneda-Lin for linear systems (Lemma 9 in \cite{Avellaneda:1987aa}).
  
  \begin{proof}(Sketch) Let $L \geq 1$ to be chosen, call $Q$ to be the unit periodicity cell of $\psi$ which has diameter at most $\sim M$. Apply the De Giorgi interior H\"{o}lder estimates or the Avellaneda-Lin large scale H\"{o}lder estimates to find,
  \[ \osc_{ \partial P_n + LMn} v= \osc_{y \in Q} v( y + LMn) \leq C L^{-\alpha}\osc_{P_n} u \leq C L^{-\alpha} \osc \psi \leq \frac{1}{2} \osc \psi .\]
  The second inequality is by the maximum principle or the $L^\infty$ estimate Lemma~\ref{lem: system max}, for the third inequality we have chosen $L \geq 1$ universal to make $CL^{-\alpha}  \leq 1/2$.  Then iterate the argument with the new boundary data on $\partial P_n + LMn$ with oscillation decayed by a factor of $1/2$.
  \end{proof}
  
%
We will also need a slight variant of the above result when the operator $a$ does not share the same periodicity as the boundary data, but instead has oscillations at a much smaller scale.  We assume that $\psi$ has periods $\ell_1,\dots,\ell_{d-1}$ as before, and now we also assume that there are $e_1,\dots, e_d$ which are periods of $a$ and,
\[ \max_{1 \leq j \leq d } |e_j| \leq \e.\]
For example this is the case with $a(\frac{x}{\e},p)$ when $a(\cdot,p)$ is $\integer^d$-periodic.  In this situation we do not quite have a boundary layer limit with exponential rate, but at least there is an exponential decay of the oscillation down to a scale $\sim \e^\alpha$.
  \begin{lem}\label{lem: exp tail nonlinear variant}
  There exists a value $c_*(\psi)$ such that, for some universal $\alpha \in (0,1)$ (nonlinear case) or for every $\alpha \in (0,1)$ (linear case),
  \[ \sup_{y \in \partial \real^d_+} |v(y+Re_d) - c_*| \leq C(\osc \psi) e^{-cR/M}+C\|\grad \psi\|_\infty \e^\alpha ,\]
  with $c,C>0$ universal and $C$ depending on $\alpha$ as well.
  \end{lem}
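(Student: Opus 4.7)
The plan is to derive this variant from Lemma~\ref{lem: exp tail nonlinear} by passing to a periodic auxiliary solution and controlling the homogenization error. Introduce $\bar v$, the solution of the Dirichlet problem in $\real^d_+$ for the homogenized operator $\bar a$ associated with the $\varepsilon$-periodic operator $a$:
\[
-\nabla \cdot \bar a(\nabla \bar v) = 0 \text{ in } \real^d_+, \qquad \bar v = \psi \text{ on } \partial \real^d_+.
\]
Because $\bar a$ is translation invariant and $\psi$ has periods $\ell_1,\dots,\ell_{d-1}$, uniqueness of the bounded Dirichlet solution forces $\bar v$ to inherit the same tangential periodicity. Lemma~\ref{lem: exp tail nonlinear} (with $f = 0$) then produces the candidate constant $c_*(\psi)$ together with the exponential convergence
\[
\sup_{y \in \partial \real^d_+}|\bar v(y+Re_d) - c_*| \leq C(\osc \psi)\,e^{-cR/M}.
\]

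The remaining task is the quantitative homogenization bound
\[
\|v - \bar v\|_{L^\infty(\real^d_+)} \leq C\|\nabla \psi\|_\infty \,\varepsilon^\alpha,
\]
after which the triangle inequality immediately produces the statement of the lemma (with the same $c_*$). To prove this estimate I would, in the linear systems case, follow the Avellaneda--Lin strategy: extract a standard two-scale expansion $\bar v + \varepsilon \chi(x/\varepsilon)\cdot \nabla \bar v$ using the correctors for $\bar a$, and estimate the resulting divergence-form source term through the Poisson/Green's function bounds of Theorem~\ref{integral}; the $\varepsilon^\alpha$ factor is picked up from the boundary layer of width $\sim \varepsilon$ where the two-scale expansion fails to match $\psi$, and the $\|\nabla\psi\|_\infty$ factor comes from the large-scale Lipschitz regularity of $\bar v$. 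For the nonlinear case, the difference $v - \bar v$ satisfies a linear equation with bounded measurable coefficients via the linearization formula \eqref{e.diffeqn}, so the same compactness/perturbation scheme applies, this time with Theorem~\ref{thm: degiorgi} and the boundary De~Giorgi estimate Lemma~\ref{lem: bdry cont nonlinear} playing the role of the Avellaneda--Lin regularity theory.

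The main obstacle is this second step: quantitative $L^\infty$ homogenization in an unbounded half-space. The difficulty is twofold. First, the bound must be independent of how deep into $\real^d_+$ we evaluate, which forces us to control the effect of the boundary layer uniformly; here the exponential tail of $\bar v - c_*$ will be used to truncate the analysis to a slab of width $O(M)$ adjacent to the boundary. Second, in the nonlinear setting the coefficients in \eqref{e.diffeqn} depend on $\nabla v$ and $\nabla \bar v$, so obtaining a Hölder exponent that is genuinely universal (independent of $\psi$) requires invoking $\osc$-based De Giorgi estimates rather than anything involving gradients, and accepting the $\alpha < 1$ depending on $d,\lambda$ that appears in the statement. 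Once this estimate is in hand, combining with the first paragraph closes the proof.
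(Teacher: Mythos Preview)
Your route is quite different from the paper's, and in the nonlinear case it has a real gap. The paper does not introduce the homogenized operator at all here. It observes instead that, although the $\ell_j$ are not periods of $v$ (since they need not be periods of the $\varepsilon$-periodic operator), each $\ell_j$ can be shifted by at most $\varepsilon$ to a vector $\tilde\ell_j$ that \emph{is} a period of $a$. Then $v(\cdot+\tilde\ell_j)$ solves the same equation with boundary data $\psi(\cdot+\tilde\ell_j)$, which differs from $\psi$ by at most $\|\grad\psi\|_\infty\varepsilon$; the boundary continuity estimate (Lemma~\ref{lem: bdry cont nonlinear} or Lemma~\ref{lem: bdry cont linear}) plus the $L^\infty$ estimate give $|v(\cdot+\tilde\ell_j)-v|\lesssim\|\grad\psi\|_\infty\varepsilon^\alpha$ on all of $\real^d_+$. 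One then reruns the oscillation--decay iteration of Lemma~\ref{lem: exp tail nonlinear}, picking up an $O(\varepsilon^\alpha)$ error at each step, and sums the geometric series. No homogenization enters.

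The problem with your argument is the nonlinear homogenization step. The linearization \eqref{e.diffeqn} does produce a linear divergence-form equation for $v-\bar v$, but its coefficients $\int_0^1 D_p a(x/\varepsilon,\, s\grad v+(1-s)\grad\bar v)\,ds$ depend on $\grad v,\grad\bar v$ and are therefore \emph{not} periodic in $x$; no compactness or corrector scheme for homogenization applies to that equation. You are thus forced back to genuine nonlinear quantitative homogenization in the half-space, and here there is a circularity: the paper's own result of this type, Proposition~\ref{prop: half-space hom}, \emph{uses} Lemma~\ref{lem: exp tail nonlinear variant} in its proof (to control $|u^\varepsilon(Rn)-\mu^\varepsilon|$ on the far side of the truncated slab). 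Your proposal to ``truncate to a slab of width $O(M)$'' via the exponential tail of $\bar v-c_*$ hits exactly this wall: on $\{x_d=R\}$ you know $\bar v$ is near $c_*$, but you have no a priori control of $v$ there, so you cannot bound $v-v_R$ for any slab approximant $v_R$. For linear systems a direct two-scale expansion combined with the half-space Green's function bounds of Theorem~\ref{integral} could plausibly be pushed through, but you have not carried it out, and it is in any case substantially heavier than the paper's one-line periodicity perturbation.
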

  \noindent Again the proof of this result mirrors the proof of Lemma~3.2 in \cite{Feldman:2015aa} and we do not include it.  Briefly, the idea is the same as Lemma~\ref{lem: exp tail nonlinear} except that the lattice vectors generated by $\ell_1,\dots,\ell_{d-1}$ are no longer periods of $v$, instead for each lattice vector there is a nearby vector (distance at most $\e$) which is a period of the operator.  This vector will almost be a period of $v$, with error of $\e^\alpha$ which comes from the boundary continuity estimate Lemma~\ref{lem: bdry cont nonlinear} (nonlinear) or Lemma~\ref{lem: bdry cont linear} (linear system).
  
  Finally we discuss the boundary layer problem \eref{cellbll} with non-zero right hand side $f$.  We will restrict to the case of linear systems.  We need to put a decay assumption on $f$ to guarantee even the existence of a solution.  We will assume that there are $K,b > 0 $ so that,
  \begin{equation}\label{e.fnorm}
  \sup_{y_d \geq R} |f(y)| \leq \frac{K}{R} e^{-bR/M}.
  \end{equation}
  Such assumption arises naturally, it is exactly the decay obtained for $\grad v$ when $v$ solves \eref{cellbll} with $f = 0$.  The $1/R$ polynomial decay is important since we will care about the dependence on $M \gg 1$, the exponential does not take effect until $R \gg M$ while the $1/R$ decay begins at the unit scale.
  \begin{lem}\label{lem: bdry layer rhs}
  Suppose that $f$ satisfies the bound \eref{fnorm} and $v$ is the solution of the half space problem \eref{cellbll} for a linear system satisfying the standard assumptions of Section~\ref{sec: linear background}.  Then there exists $c_*(\psi,f)$ such that,
  \[ \sup_{y \in \partial \real^d_+} |v(y+Re_d) - c_*| \leq C ((\osc \psi) +K \log M)e^{-b_0R/M}  \]
  the constants $C$ and $b_0$ depend on universal parameters as well as $b$ from \eref{fnorm}.
  \end{lem}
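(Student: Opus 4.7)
The strategy is to decompose $v = v_h + v_p$, where $v_h$ is the homogeneous half-space solution with boundary data $\psi$ and $v_p$ solves the equation with zero boundary data and right-hand side $\nabla \cdot f$. Applying Lemma~\ref{lem: exp tail nonlinear} directly to $v_h$ yields a constant $c_h^*$ with $\sup_{y \in \partial \mathbb{R}^d_+}|v_h(y + Re_d) - c_h^*| \leq C(\osc \psi) e^{-cR/M}$, which contributes the $\osc \psi$ part of the bound. The remaining task is to produce the analogous exponential decay statement for $v_p$ with the advertised prefactor of order $K \log M$.

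The core estimate is the $L^\infty$ bound $\|v_p\|_{L^\infty(\mathbb{R}^d_+)} \leq C K \log M$. Using the Green's matrix representation
\[
v_p(x) = -\int_{\mathbb{R}^d_+} \nabla_y G(x,y) \cdot f(y)\, dy
\]
together with the bounds $|\nabla_y G(x,y)| \lesssim \delta(x)/|x-y|^d + \delta(x)\delta(y)/|x-y|^{d+1}$ (which follow by applying Theorem~\ref{integral} to the adjoint operator), we split the $y$-integration according to height. The near-boundary piece $\{y_d \leq 1\}$ uses $|f| \leq K$ together with the integrable Poisson-type decay of $\nabla_y G$; the far piece $\{y_d \geq M\}$ contributes only $O(K)$ once the exponential factor in \eref{fnorm} is brought to bear; and the intermediate slab $\{1 \leq y_d \leq M\}$, on which $|f(y)| \leq K/y_d$, produces exactly the factor $\int_1^M y_d^{-1}\, dy_d = \log M$. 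The cancellations required to make each of these pieces convergent rely on the boundary decay factors $\delta(x), \delta(y)$ in the Green's matrix bound, in the same spirit as the Poisson kernel calculations that precede Lemma~\ref{lem: PK bounds 2}.

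With the initial $L^\infty$ bound in hand we mimic the proof of Lemma~\ref{lem: exp tail nonlinear}, iterating a De~Giorgi / Avellaneda-Lin oscillation decay estimate at heights $R_k = kLM$ for a universal $L$ chosen so that the decay constant is $\le \tfrac{1}{2}$. On the ball $B_{LM}(R_k e_d) \subset P_n$ the adapted oscillation-decay estimate for the inhomogeneous equation reads
\[
\osc_{B_{LM/2}(R_k e_d)} v_p \leq \tfrac{1}{2}\osc_{B_{LM}(R_k e_d)} v_p + C LM\, \|f\|_{L^\infty(B_{LM}(R_k e_d))},
\]
and once $LM/2$ exceeds the period diameter the left side dominates the slice oscillation $\osc_{\{y_d = R_k\}} v_p$. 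Estimating $\|f\|_{L^\infty(B_{LM}(R_k e_d))} \leq CK (kLM)^{-1} e^{-bkL}$ from \eref{fnorm}, the resulting recursion gives a geometric sum of errors bounded by $C K e^{-b_0 R/M}$ for any $b_0 < \min\{b, L^{-1}\log 2\}$; combining this with the initial $\osc_{y_d \geq 0} v_p \leq 2\|v_p\|_\infty \leq CK\log M$ yields the stated bound for $v_p$, and adding $v_h$ back completes the proof. The main obstacle is the sharp $\log M$ bound on $\|v_p\|_\infty$: a naive pointwise use of $|\nabla_y G| \lesssim |x-y|^{1-d}$ produces genuinely divergent integrals, so one must squeeze the boundary decay factors from Theorem~\ref{integral} to gain convergence and isolate the logarithmic contribution.
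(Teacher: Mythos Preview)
Your proposal is correct and follows essentially the same strategy as the paper. The paper also splits off the Poisson integral (your $v_h$) and then bounds the Green's function term; for the latter it introduces the maximal-function norms $M_p(f,R)$ and $I_p(f)$ and proves $\osc v \lesssim \osc\psi + I_p(f)$ via a Whitney cube decomposition (cubes $Q_{N,j}$ indexed by dyadic height $N$ and tangential lattice position $j$, with the key bound $\|\nabla G(x,\cdot)\|_{L^{p'}_{avg}(Q_{N,j})} \lesssim N^{1-d}(1+|j|)^{-d}$), after which the hypothesis \eref{fnorm} gives $I_p(f)\lesssim K\log M$. Your height-slab decomposition $\{y_d\le 1\}\cup\{1\le y_d\le M\}\cup\{y_d\ge M\}$ with the pointwise Avellaneda--Lin bounds is a less systematic but perfectly adequate version of the same computation, and indeed isolates the logarithm in the same way. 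For the exponential tail the paper simply cites Lemma~A.4 of \cite{Feldman:2015aa}; your explicit inhomogeneous oscillation-decay iteration is exactly what that reference does. The Whitney/$L^p$ framework in the paper is slightly more flexible (it would accommodate $f$ that is only in $L^p_{loc}$ rather than $L^\infty$), but for the stated hypothesis your pointwise argument is equivalent.
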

See the appendix and Lemma A.4 of \cite{Feldman:2015aa} for more details.

  \subsection{Interior homogenization of a boundary layer problem} In this section we will consider the \emph{interior} homogenization of half-space problems with periodic boundary data, as explained in Section~\ref{sec: outline} such a problem arises in the course of computing the directional limits of $\varphi_*$ at a rational direction.  
   \begin{equation}\label{e.half-space hom}
  \left\{
 \begin{array}{ll}
 - \grad \cdot a(\tfrac{x}{\e},\grad u^\e) = 0 & \hbox{ in } P_n \vspace{1.5mm}\\
 u^\e = \psi(x) & \hbox{ on }  \partial P_n
 \end{array}
 \right. 
 \end{equation}
 homogenizing to
 \begin{equation}\label{e.half-space hom2}
 \left\{
 \begin{array}{ll}
 - \grad \cdot a^0(\grad u^0) = 0 & \hbox{ in } P_n \vspace{1.5mm}\\
 u^0 = \psi(x) & \hbox{ on }  \partial P_n.
 \end{array}
 \right.
 \end{equation}
 Here $\psi : \partial P_n \to \real^N$, as in the previous section, will be smooth and periodic with respect to $d-1$ linearly independent translations parallel to $\partial P_n$ which we call $\ell_1,\dots,\ell_{d-1} \in \partial P_n$.  As before we call $M = \max_{j} |\ell_j|$, expecting homogenization we assume $M \gg \e$, and for convenience we can assume that $M = 1$, general results can be derived by scaling.  
 
 This problem is quite similar to the standard homogenization problem for Dirichlet boundary data, the unboundedness of the domain is compensated by the periodicity of the boundary data and by the existence of a boundary layer limit which is a kind of (free) boundary condition at infinity.  The main result of this section is the \emph{uniform} convergence of $u^\e$ to $u^0$, and hence also (importantly for us) the convergence of the boundary layer limits,

 \begin{prop}\label{prop: half-space hom}
 Homogenization holds for \eref{half-space hom} with estimates:
 \begin{enumerate}[$(i)$]
 \item \textup{(nonlinear equations)} For every $\beta \in (0,1)$, $\e \leq 1/2$, there exists $\alpha(\beta,\lambda,d)$ such that,
 \[ \sup_{P_n} |u^\e - u^0| \leq C [ \psi]_{C^\beta} \e^\alpha.\]
 \item \textup{(linear systems)} For every $\e \leq 1/2$,
 \[ \sup_{P_n} |u^\e - u^0| \leq C [ \psi]_{C^4} \e (\log \tfrac{1}{\e})^3.\]
 \end{enumerate}
 \end{prop}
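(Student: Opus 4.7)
The plan is to leverage the boundary layer decay results (Lemmas~\ref{lem: exp tail nonlinear}, \ref{lem: exp tail nonlinear variant}, \ref{lem: bdry layer rhs}) to reduce the uniform estimate on $P_n$ to a problem in a strip of width $R \sim \log(1/\e)$, after which standard periodic homogenization techniques apply. Concretely, set $R = C\log(1/\e)$ and apply Lemma~\ref{lem: exp tail nonlinear variant} to $u^\e$ and Lemma~\ref{lem: exp tail nonlinear} to $u^0$: outside the strip $\Sigma_R = \{0 < x \cdot n < R\}$ both functions are within $C\e$ of their respective boundary layer limits. The remaining task is to estimate $\sup_{\Sigma_R}|u^\e - u^0|$, with the convergence of the boundary layer limits following a posteriori.

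For the linear systems case $(ii)$ I would carry out a classical two-scale corrector expansion. Let $\chi$ denote the periodic corrector for $a$ and set $u^\e_1 = u^0 + \e\,\chi(\cdot/\e)\cdot\nabla u^0$. Then $w = u^\e - u^\e_1$ solves a linear system of the same class with a divergence-form right hand side of magnitude $\e \|\chi\|_\infty \|\nabla^2 u^0\|$ and a boundary residual of size $\e\|\chi\|_\infty\|\nabla u^0\|$. The crucial input is that $u^0$ solves a constant-coefficient system in $P_n$ with periodic data, so by Lemma~\ref{lem: exp tail nonlinear} (applied to $u^0$ and its tangential derivatives) all its derivatives decay exponentially into $P_n$; consequently the right hand side satisfies the decay hypothesis \eref{fnorm} with $K \lesssim \e\,\|\nabla^2 u^0\|_\infty$, and Lemma~\ref{lem: bdry layer rhs} delivers a rate of order $\e\,(\log(1/\e))^3\,\|\psi\|_{C^4}$. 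The three logarithmic losses can be tracked through the cutoff width $R$, the near-boundary growth of the Avellaneda-Lin regularity estimates for $u^0$ (Lemma~\ref{lem: flat reg}), and the Green's function accounting inside Lemma~\ref{lem: bdry layer rhs}. The $C^4$ norm on $\psi$ is natural since the two-scale expansion requires control of $\nabla^2 u^0$ up to the boundary together with one extra derivative for the divergence residual.

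For the nonlinear case $(i)$ no clean two-scale expansion is available, so I would argue by compactness plus oscillation decay. The difference $v^\e = u^\e - u^0$ satisfies a linear uniformly elliptic equation of the type \eref{diffeqn} with bounded measurable coefficients and zero boundary data on $\partial P_n$, so the interior De Giorgi estimate (Theorem~\ref{thm: degiorgi}) and the boundary Hölder estimate (Lemma~\ref{lem: bdry cont nonlinear}) apply uniformly in $\e$. Using the periodicity of $\psi$ parallel to $\partial P_n$, one can pass to the quotient of $\Sigma_R$ by the $(d-1)$-dimensional lattice generated by $\ell_1,\dots,\ell_{d-1}$, which is effectively a bounded domain, and apply qualitative homogenization for monotone divergence form equations on that compactification to conclude $u^\e \to u^0$ uniformly. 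Interpolating this qualitative convergence against the uniform Hölder regularity (via an Avellaneda-Lin style rescaling) then upgrades to the quantitative rate $\e^\alpha$ with $\alpha = \alpha(\beta,\lambda,d)>0$.

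The main obstacle is the log bookkeeping in $(ii)$: three independent log losses must be identified and combined without over-counting, which requires careful use of the exponential decay hypothesis \eref{fnorm} at each step of the Green's function analysis. In the nonlinear case $(i)$ the difficulty is more conceptual: the rate $\alpha$ is controlled by the De Giorgi exponent and cannot be pushed close to $1$ with these tools, and the unbounded tangential extent of $P_n$ must be handled through the periodicity-quotient argument rather than by standard bounded-domain compactness.
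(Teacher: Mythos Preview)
Your linear approach via the two-scale corrector expansion combined with Lemma~\ref{lem: bdry layer rhs} is a genuinely different route from the paper's and is plausible, though the precise accounting for $(\log\tfrac{1}{\e})^3$ is left vague. The paper does not use correctors at all: it restricts to the strip $\Pi_n(0,R)$, invokes the Avellaneda--Lin homogenization rate \eref{hom est lin} there as a black box, and then closes with a bootstrap argument (below). Your approach has the advantage of working directly in $P_n$; the paper's has the advantage of treating the linear and nonlinear cases uniformly.

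Your nonlinear argument, however, has real gaps. First, the claim that $v^\e=u^\e-u^0$ satisfies an equation of type \eref{diffeqn} is false: that identity requires both functions to solve the \emph{same} operator, whereas $u^\e$ solves $-\nabla\cdot a(\tfrac{x}{\e},\nabla u^\e)=0$ and $u^0$ solves $-\nabla\cdot a^0(\nabla u^0)=0$. Writing $v^\e$ as a linear solution forces a right-hand side $\nabla\cdot[a(\tfrac{x}{\e},\nabla u^0)-a^0(\nabla u^0)]$ which is $O(1)$, so De~Giorgi on $v^\e$ gives nothing useful. Second, $u^\e$ is not periodic with respect to the lattice of $\psi$ (the operator has period $\e$, not $1$), so passing to a quotient of $\Sigma_R$ is not legitimate; Lemma~\ref{lem: exp tail nonlinear variant} reflects exactly this mismatch. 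Third, and most importantly, your strip reduction is circular: to compare $u^\e$ and $u^0$ on $\Sigma_R$ you need boundary values at $x\cdot n=R$, which are $\mu^\e+o(1)$ and $\mu^0+o(1)$ --- but $|\mu^\e-\mu^0|$ is precisely the unknown quantity.

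The paper resolves this circularity with a bootstrap that your sketch is missing. It introduces an auxiliary $u^0_{R,\e}$ solving the homogenized equation in $\Pi_n(0,R)$ with top boundary value $\mu^\e$ (not $\mu^0$), so that the known strip rate \eref{hom est nonlin} controls $|u^\e_R-u^0_{R,\e}|$. The remaining piece $q^\e:=u^0_{R,0}-u^0_{R,\e}$ solves a homogeneous linear problem vanishing on $x\cdot n=0$ with value $\mu^0-\mu^\e$ on $x\cdot n=R$; the boundary H\"older estimate (Lemma~\ref{lem: bdry cont nonlinear} after rescaling) gives a universal $\delta_0>0$ with $|q^\e(\delta_0 R n)|\le\tfrac12|\mu^0-\mu^\e|$. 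Evaluating all terms at the point $\delta_0 R n$ yields
\[ |\mu^\e-\mu^0| \le C R(\e/R)^\gamma + \tfrac12|\mu^\e-\mu^0| + C(\e^\alpha+e^{-cR}), \]
and the middle term is absorbed. Choosing $R\sim\log\tfrac{1}{\e}$ then gives the rate. This absorption trick is the essential idea you need for the nonlinear case.
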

\noindent We will follow the idea of \cite{Feldman:2015aa} Lemma~4.5, there is a slight additional difficulty since for divergence form nonlinear problems it is not possible to add a linear function $n \cdot x$ and preserve the solution property, even for the homogenized problem.  The $C^4$ norm we require for $\psi$ in the linear systems case is more than necessary.

The proof will use known results about homogenization of Dirichlet boundary value problems in bounded domains, specifically we consider the problem in a strip type domain,
 \begin{equation}
  \left\{
 \begin{array}{ll}
 - \grad \cdot a(\tfrac{x}{\e},\grad u^\e_R) = 0 & \hbox{ in } \Pi_n(0,R) = \{ 0 < x \cdot n < R\} \vspace{1.5mm}\\
 u^\e_R = \psi(x) & \hbox{ on } \partial \Pi_n(0,R)=  \{ x \cdot n \in \{0,R\} \},
 \end{array}
 \right.
 \end{equation}
 where we make some choice to extend $\psi$ to $x \cdot n = R$.  The solution of the homogenized problem $u^0_R$ is defined analogously.

For linear systems we have the following rate for convergence, for $R \geq 1$,
\begin{equation}\label{e.hom est lin}
 \sup_{\Pi_n(0,R)} |u^\e_R - u^0_R| \leq CR^4\| \psi\|_{C^{4}}(R^{-1}\e),
 \end{equation}
which can be derived from the rate of convergence proved in Avellaneda-Lin \cite{Avellaneda:1991aa} by scaling.  The $C^4$ regularity on $\psi$ is sufficient, we did not state the precise regularity requirement on $\psi$ which can be found in \cite{Avellaneda:1991aa}. With less regularity on $\psi$ one can also obtain an algebraic rate of convergence $O(\e^\alpha)$.

For nonlinear equations there is an algebraic rate of convergence, for any $ \beta \in (0,1)$
\begin{equation}\label{e.hom est nonlin}
 \sup_{\Pi_n(0,R)}| u^\e_R - u^0_R| \leq CR^\beta\|\psi\|_{C^{0,\beta}} (R^{-1} \e)^\alpha 
 \end{equation}
with some $\alpha = \alpha(\beta) \in (0,1)$ universal.

\begin{proof}[Proof of Proposition~\ref{prop: half-space hom}]
We define the boundary layer limits of, respectively, the $\e$-problem and the homogenized problem in \eref{half-space hom}.   We have not proven that the $\e$-problem has a boundary layer limit, however Lemma~\ref{lem: exp tail nonlinear variant} gives that the limit values are concentrated in a set of diameter $o_\e(1)$.  So we define,
 \[ \mu^\e  \in \lim_{R \to \infty} u^\e(Rn) \ \hbox{ and } \ \mu^0 = \lim_{R \to \infty} u^0(Rn),\]
where $\mu^\e$ can be any subsequential limit and satisfies, again via Lemma~\ref{lem: exp tail nonlinear variant},
\begin{equation}\label{e.nonlinear mu est}
|\mu^\e - u^\e(Rn)| \leq C\|\grad \psi\|_\infty (\e^\alpha +  e^{-cR}) \ \hbox{ (nonlinear case) } 
\end{equation}
and
\begin{equation}\label{e.linear mu est}
 |\mu^\e - u^\e(Rn)| \leq C\|\grad \psi\|_{C^{0,\nu}}( \e + C e^{-cR}) \ \hbox{ (linear system case).} 
 \end{equation}
  Instead of arguing directly with $u^\e$ and $u^0$ we consider,
 \begin{equation}\label{e.R probs}
 \left\{
\begin{array}{ll}
 -\nabla  \cdot a(\tfrac{x}{\e}, \nabla  u^{\e}_{R})=0 & \hbox{ in } \Pi_n(0,R) ,\vspace{1.5mm}\\
 u_{R}^{\e} =\psi(x) &  \hbox{ on } x\cdot n = 0, \vspace{1.5mm}\\
 u_R^{\e} = \mu^\e &   \hbox{ on } x \cdot n = R.
\end{array}
\right. 
\end{equation}
and, for $j \in \{0,\e\}$
\begin{equation}
\left\{
\begin{array}{ll}
 -\nabla  \cdot a^0( \nabla  u^{0}_{R,j})=0 & \hbox{ in } \Pi_n(0,R) ,\vspace{1.5mm}\\
 u_{R,j}^{0} =\psi(x) &  \hbox{ on } x\cdot n = 0, \vspace{1.5mm}\\
 u_{R,j}^{0} = \mu^j &   \hbox{ on } x \cdot n = R.
\end{array}
\right. 
\end{equation}
We will choose $R = R(\e)$ below to balance the various errors.  The error in replacing $u^\e$ by $u^\e_R$,
\[ |u^\e(x) - u^\e_R(x)| \leq C\|\grad \psi\|_\infty (\e^\alpha +  e^{-cR}) \ \hbox{ for } \ x \in \Pi_n(0,R),\]
and replacing $u^0$ by $u^0_{R,0}$,
\[ |u^0(x) - u^0_{R,0}(x)| \leq C(\osc \psi)  e^{-cR} \ \hbox{ for } \ x \in \Pi_n(0,R), \]
the estimates holds on $\partial \Pi_n(0,R)$ by \eref{nonlinear mu est} (or for linear we use \eref{linear mu est} instead), and therefore by the maximum principle (or by Lemma~\ref{lem: system max} for linear systems) they hold on the interior as well.  To estimate the error in replacing $u^0_{R,0}$ by $u^0_{R,\e}$ we need to estimate the difference $\mu^\e - \mu^0$, which is basically the goal of the proof, this will be achieved below.

By Lemma~\ref{lem: bdry cont nonlinear} (or Lemma~\ref{lem: flat reg} in the linear systems case) there exists a universal $\delta_0(\lambda,d)>0$ so that if $B$ is uniformly elliptic and $q$ solves,
\begin{equation}\label{e.q unif}
\left\{
\begin{array}{ll}
 -\nabla  \cdot (B(x) \grad q)=0 & \hbox{ in } \Pi_n(0,1) ,\vspace{1.5mm}\\
 q =0 &  \hbox{ on } x\cdot n = 0, \vspace{1.5mm}\\
 |q| =1  &   \hbox{ on } x \cdot n = 1,
\end{array}
\right. \ \hbox{ then } \ |q(x)| \leq \frac{1}{2} \ \hbox{ for } \ x \cdot n \leq \delta_0.
\end{equation}
Now call, 
\[ q^\e = u^0_{R,0} - u^{0}_{R,\e} \ \hbox{ which solves } \ \left\{
\begin{array}{ll}
 -\nabla  \cdot (B(x) \grad q^\e)=0 & \hbox{ in } 0<x\cdot n<R ,\vspace{1.5mm}\\
 q^\e =0 &  \hbox{ on } x\cdot n = 0, \vspace{1.5mm}\\
 q^\e =  \mu^0 - \mu^\e &   \hbox{ on } x \cdot n= R
\end{array}
\right.
\]
with $B(x) = A^0$ in the linear case or,
\[ B(x) =  \int_0^1 Da^0( t \grad u^0_{R,0}(x) + (1-t) \grad u^0_{R,\e}(x)) \ dt  \ \hbox{ uniformly elliptic,}\]
in the nonlinear case.  Now $\frac{1}{|\mu^0 - \mu^\e|}q(Rx)$ solves an equation of the type \eref{q unif} and so,
\[ |q(\delta_0 Rn)| \leq \tfrac{1}{2}|\mu^0  - \mu^\e|.\]
Now we apply the homogenization error estimates \eref{hom est nonlin} and \eref{hom est lin} for the domain $\Pi_n(0,R)$ to \eref{R probs}
\[ |u^{0}_{R,\e} - u^\e_R| \leq CR \|\grad \psi\|_{\infty}(R^{-1}\e)^\gamma\]
or respectively in the linear system case,
\[ |u^{0}_{R,\e} - u^\e_R| \leq CR^4 \|\psi\|_{C^4}(R^{-1}\e). \]
Now we estimate the error in $\mu^\e - \mu^0$, for the nonlinear case,
\begin{align*}
 | \mu^\e - \mu^0 | &\leq |u^\e(\delta_0 Rn) - u^0(\delta_0 Rn)| + C\|\grad \psi\|_\infty(\e^\alpha + e^{-cR}) \\
 &\leq |u^\e_R(\delta_0 Rn) - u^{0}_{R,\e}(\delta_0 Rn)| + |q^\e(\delta_0Rn)|+C\|\grad \psi\|_\infty(\e^\alpha + e^{-cR}) \\
 &\leq CR \|\grad \psi\|_{\infty}(R^{-1}\e)^\gamma + \tfrac{1}{2} |\mu^\e - \mu^0|+ C\|\grad \psi\|_\infty(\e^\alpha + e^{-cR}).
 \end{align*}
 Moving the middle term above to the left hand side we find, 
 \[ | \mu^\e - \mu^0 |  \leq C \|\grad \psi\|_\infty( R(R^{-1}\e)^\gamma + \e^\alpha + e^{-cR}) \leq C  \|\grad \psi\|_\infty \e^{\alpha'} \]
 where finally we have chosen $R = C \log \frac{1}{\e}$ and $\alpha' < \min\{ \alpha, \gamma \}$.  The same argument in the linear case yields,
 \[ | \mu^\e - \mu^0 | \leq C [\psi]_{C^{4}}( R^4(R^{-1}\e) + \e + e^{-cR}) \leq C[\psi]_{C^{4}} \e (\log \tfrac{1}{\e})^3.\]

\end{proof}

\section{Asymptotics near a rational direction}\label{sec: asymptotics}
We study asymptotic behaviour of the cell problems as $n \in S^{d-1}$ approaches a rational direction $\xi\in \mathbb{Z}^d\backslash\{0\}$. We recall $v^s_{\xi}$ the solution of the cell problem,
\begin{equation}\label{cell1}
\left\{\begin{aligned}
& -\nabla  \cdot a(x+s\xi, \nabla v^s_{\xi}) =0 &\text{ in }& P_\xi ,\\
&  v^s_{\xi}(x)=\varphi(x+s\xi) &\text{ on }& \partial P_\xi .
\end{aligned}
\right.\end{equation}
The boundary layer limit of the above cell problem depends on the parameter $s$ and we define,
\begin{equation}\label{e.phi*}
\varphi_{*}(\xi,s):=\lim_{R\rightarrow \infty}v^s_{\xi}(x+R\xi)
\end{equation}
which is well-defined and the limit is independent of $x$, see Lemma~\ref{lem: exp tail nonlinear}.  It follows from Bezout's identity that $\varphi_*$ is a $1/|\xi|$ periodic function on $\real$, see Lemma 2.9 in \cite{Feldman:2015aa}.  As long as we can we will combine the arguments for linear systems and nonlinear equations.

\subsection{Regularity of \texorpdfstring{$\varphi_*(\xi,\cdot)$}{Lg}}
To begin we need to establish some regularity of $\varphi_*(\xi,\cdot)$. For quantitative purposes it is important to control the dependence of the regularity on $|\xi|$.  We just state the results, postponing the proofs till the end of the section.  A modulus of continuity for $\varphi_*(\xi,\cdot)$ which is uniform in $|\xi|$ is not difficult to establish.  This follows from the continuity up to the boundary Lemma~\ref{lem: bdry cont nonlinear} (or Lemma~\ref{lem: flat reg}) and the maximum principle Lemma~\ref{lem: hs comparison nonlinear} (or the $L^\infty$ estimate Lemma~\ref{lem: system max}). 
\begin{lem}\label{lem: reg holder}
The boundary layer limits $\varphi_*(\xi,s)$ are continuous in $s$.
\begin{enumerate}[(i)]
\item \textup{(Nonlinear equations)}
\[ [ \varphi_*(\xi,\cdot)]_{C^\alpha} \leq C \| \grad \varphi\|_\infty ,\]
which holds for some universal $C \geq 1$ and $\alpha \in (0,1)$.  
\item \textup{(Linear systems)}  H\"{o}lder estimates as above holds for all $\alpha \in (0,1)$ and moreover,
\[ \| \frac{d}{ds}\varphi_*(\xi,\cdot)\|_\infty \leq C \| \grad \varphi\|_{C^{0,\nu}} \ \hbox{ for any } \ 0 < \nu \leq 1. \]
\end{enumerate}
\end{lem}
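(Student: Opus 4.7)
The plan is to compare $v^s_\xi$ and $v^{s'}_\xi$ after first translating each to a half-space problem for the \emph{unshifted} operator, so that the two functions satisfy the same equation on two half-spaces differing only by a small normal shift; the H\"older regularity in $s$ then reduces to a boundary H\"older estimate on the overlap plus a comparison principle. Concretely, I would set $\tilde v^s(x):=v^s_\xi(x-s\xi)$, so that by $\mathbb{Z}^d$-periodicity of $a$ the function $\tilde v^s$ solves
\[
-\nabla\cdot a(x,\nabla \tilde v^s)=0 \text{ in } P_\xi+s\xi=\{x\cdot\hat\xi>s|\xi|\}, \quad \tilde v^s=\varphi \text{ on } \partial(P_\xi+s\xi),
\]
with $\varphi_*(\xi,s)=\lim_{R\to\infty}\tilde v^s(R\hat\xi)$. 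Assuming without loss $s>s'$, I would write $h:=(s-s')|\xi|$ for the height offset between the two boundaries; by the $1/|\xi|$-periodicity of $\varphi_*(\xi,\cdot)$ it suffices to handle $h\leq 1/2$.

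Both $\tilde v^s$ and $\tilde v^{s'}$ solve the same equation on the common domain $P_\xi+s\xi\subset P_\xi+s'\xi$. On its boundary, $\tilde v^s=\varphi$ exactly, while $\tilde v^{s'}(x)$ is the trace of the larger-domain solution at distance $h$ above its own boundary point $x-h\hat\xi$. I would apply Lemma~\ref{lem: bdry cont nonlinear} (or Lemma~\ref{lem: flat reg} in the linear systems case) to $\tilde v^{s'}$ in the unit ball around $x-h\hat\xi$, and combine it with the Lipschitz bound $|\varphi(x)-\varphi(x-h\hat\xi)|\leq \|\nabla\varphi\|_\infty h$, to obtain
\[
|\tilde v^{s'}(x)-\varphi(x)|\leq C\|\nabla\varphi\|_\infty h^\alpha \text{ on } \partial(P_\xi+s\xi).
\]
A harmless prior normalization $\int\varphi=0$ turns $\osc\tilde v^{s'}$ into an $\|\nabla\varphi\|_\infty$-quantity via Poincar\'e and only adds the same constant to $\varphi_*$.

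Since constants preserve our equations, the comparison principle Lemma~\ref{lem: hs comparison nonlinear} (or the $L^\infty$ estimate Lemma~\ref{lem: system max}) propagates the boundary bound to the whole of $P_\xi+s\xi$, and passing $R\to\infty$ in $\tilde v^s(R\hat\xi)$ yields $|\varphi_*(\xi,s)-\varphi_*(\xi,s')|\leq C\|\nabla\varphi\|_\infty((s-s')|\xi|)^\alpha$, which combined with $1/|\xi|$-periodicity is the desired modulus of continuity on $\mathbb{R}$. In the linear systems case Lemma~\ref{lem: flat reg} is available for every $\alpha\in(0,1)$, giving the full H\"older range; the Lipschitz statement follows identically after replacing the H\"older oscillation step by the boundary gradient bound $\|\nabla\tilde v^{s'}\|_\infty\leq C\|\nabla\varphi\|_{C^{0,\nu}}$ from the same lemma, which upgrades $h^\alpha$ to $h$.

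The main care point is keeping the constants universal: the raw estimate $((s-s')|\xi|)^\alpha$ carries a factor of $|\xi|^\alpha$ that must be absorbed into the natural scale $|s-s'|\lesssim 1/|\xi|$ through the $1/|\xi|$-periodicity of $\varphi_*(\xi,\cdot)$, so that the bound is packaged as a modulus of continuity at the relevant scale rather than as a uniform global H\"older seminorm. Apart from this bookkeeping the argument is a routine combination of the regularity and stability tools developed in Sections~\ref{sec: linear background} and \ref{sec: nonlinear background}.
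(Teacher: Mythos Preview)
Your proposal is correct and follows essentially the same approach as the paper: translate so that both solutions live on nested half-spaces for the same operator, use the boundary H\"older (resp.\ Lipschitz) estimate to bound the difference on the inner boundary, and then propagate via the maximum principle or the $L^\infty$ estimate Lemma~\ref{lem: system max}. The paper's proof is terser---it works directly with $v_\xi^h$ as a solution in $P_\xi + h\hat\xi$ and does not explicitly track the $|\xi|$ factor you flag---so your bookkeeping of the $|\xi|^\alpha$ against the $1/|\xi|$-periodicity is a harmless (and arguably clarifying) extra step rather than a departure in method.
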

To optimize our estimates, in the linear case, we will also need higher regularity of $\varphi_*$ which is (almost) uniform in $|\xi|$, this is somewhat harder to establish.
\begin{lem}\label{lem: reg smooth}
\textup{(Linear systems)} For any $\xi \in \integer^d \setminus \{0\}$, suppose $\varphi_{*}(\xi,s)$ is defined as above. Then for all $j\in\mathbb{N}^d$ and any $\nu >0$ there exists some constant $C_j$ universal such that,
\[\sup_s |\frac{d^j}{d s^j} \varphi_{*}(\xi,s)|\leq    C_j\|\varphi\|_{C^{j,\nu}}\log^{j} (1+|\xi|).\]
\end{lem}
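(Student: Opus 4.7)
The plan is to prove the lemma by induction on $j$, exploiting the linear structure to differentiate the cell problem \eqref{cell1} directly in the parameter $s$. For the base case $j=0$ the estimate is just the maximum principle (Lemma~\ref{lem: system max}). For the inductive step, set $w_j := \partial_s^j v^s_\xi$. Differentiating the system $j$ times in $s$ and grouping by the Leibniz rule produces a linear elliptic system of the same type,
\[
  -\nabla\cdot\bigl(A(x+s\xi)\nabla w_j\bigr) \;=\; \nabla\cdot F_j \quad\text{in } P_\xi, \qquad w_j\big|_{\partial P_\xi}=(\xi\cdot\nabla)^j\varphi(\cdot+s\xi),
\]
where $F_j$ is a finite sum of terms $c_{j,k}\,\bigl((\xi\cdot\nabla)^k A\bigr)(x+s\xi)\,\nabla w_{j-k}$ for $1\le k\le j$. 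Both the Dirichlet data and $F_j$ inherit periodicity parallel to $\partial P_\xi$, with parallel periods of diameter $M\sim|\xi|$, so the setting of Lemma~\ref{lem: bdry layer rhs} applies. The boundary layer limit of $w_j$ is exactly $\partial_s^j\varphi_*(\xi,s)$, and the goal is to bound it.

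The heart of the argument is to feed the inductive hypothesis into the source $F_j$ via gradient estimates. For each $i<j$, the function $w_i-\partial_s^i\varphi_*(\xi,s)$ solves a boundary layer problem with data oscillation bounded inductively by $\|\varphi\|_{C^{i,\nu}}\log^i(1+|\xi|)$ (and zero boundary layer limit), so Lemma~\ref{lem: exp tail nonlinear}/Lemma~\ref{lem: bdry layer rhs} give exponential decay at rate $1/|\xi|$. Combining with Avellaneda--Lin large scale interior Lipschitz estimates (via Lemma~\ref{lem: flat reg} on balls of radius $y_d/2$) yields, for $y_d\ge 1$,
\[
  |\nabla w_i(y)| \;\lesssim\; \|\varphi\|_{C^{i,\nu}} \log^{i}(1+|\xi|)\, e^{-cy_d/|\xi|},
\]
and a uniform bound $|\nabla w_i|\lesssim\|\varphi\|_{C^{i,\nu}}\log^i(1+|\xi|)$ near the boundary by Lemma~\ref{lem: flat reg}. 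Plugging this back into $F_j$ produces a bound of the form $|F_j(y)|\le (K_j/y_d)\,e^{-by_d/M}$ with $M=|\xi|$ and $K_j\lesssim\|\varphi\|_{C^{j,\nu}}\log^{j-1}(1+|\xi|)$, matching the hypothesis of Lemma~\ref{lem: bdry layer rhs}.

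Applying Lemma~\ref{lem: bdry layer rhs} then yields exponential convergence $|w_j(y+R\hat\xi)-\partial_s^j\varphi_*(\xi,s)|\lesssim (\osc\psi_j+K_j\log M)e^{-b_0R/M}$, and coupling this with the $L^\infty$ estimate of Lemma~\ref{lem: system max} on the boundary strip $\{0<y_d\le 1\}$ bounds the limit value itself. The single new logarithm comes precisely from the $K_j\log M=K_j\log|\xi|$ term, producing the bound
\[
  \bigl|\partial_s^j\varphi_*(\xi,s)\bigr| \;\lesssim\; \|\varphi\|_{C^{j,\nu}}\log^j(1+|\xi|),
\]
which closes the induction.

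The main obstacle is to verify that the naive factor $|\xi|^j$ coming from the differentiated boundary data $(\xi\cdot\nabla)^j\varphi(\cdot+s\xi)$ does not propagate through the estimate. This requires that at each level of the induction one extracts an \emph{inductive} source of the correct form $(K_j/y_d)e^{-by_d/|\xi|}$ rather than the trivial bound $K_j\sim|\xi|^j$; the scale-$|\xi|$ exponential decay of the lower-order $\nabla w_i$ (plus the $1/y_d$ improvement from interior gradient estimates) is exactly what turns the would-be polynomial $|\xi|^j$ into the logarithmic growth $\log^j(1+|\xi|)$ via the $K\log M$ balance in Lemma~\ref{lem: bdry layer rhs}.
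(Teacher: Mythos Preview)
Your approach is the paper's: differentiate the cell problem in $s$, carry an inductive gradient--decay hypothesis of the form $(1/R)\log^j(1+|\xi|)\,e^{-cR/|\xi|}$ for $\nabla\partial_s^j v^s$, feed it into the divergence-form source, and pick up one new logarithm per step from the $K\log M$ balance in Lemma~\ref{lem: bdry layer rhs} (equivalently Lemma~\ref{lem: rhs infty}). Two small corrections: your displayed bound for $\nabla w_i$ is missing the $1/y_d$ factor that the interior gradient estimate on $B_{y_d/2}$ actually produces and that you (correctly) use on the very next line for $F_j$; and your worry about an $|\xi|^j$ from the boundary data evaporates once you parametrize by the unit normal $\hat\xi$ as in the paper's proof, since then $\partial_s^k v^s|_{\partial P_\xi}=(\hat\xi\cdot\nabla)^k\varphi$ is bounded by $\|\varphi\|_{C^k}$ uniformly in $|\xi|$.
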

Note that Lemma~\ref{lem: reg smooth} is a bit weaker than Lemma~\ref{lem: reg holder} in the case $j=1$, this is because we take a different approach which is suboptimal in the $j=1$ case, it is not clear if the logarithmic terms are necessary when $j >1$.  The proof is similar to Lemma 7.2 in \cite{Feldman:2015aa}, taking the derivative of $v^s_\xi$ with respect to $s$ and estimating based on the PDE.  Probably more precise Sobolev estimates are possible but we did not pursue this.

\subsection{Intermediate scale asymptotics}\label{sec3}
Consider an irrational direction $n$ close to a lattice direction $\xi \in \integer^d \setminus \{0\}$.  Let $\e>0$ small and we write,
\[ n = (\cos\e)\hat\xi - (\sin\e)\eta \ \hbox{ for some } \ \xi \in \integer^d \setminus \{0\} \ \hbox{ and a unit vector } \ \eta \perp \xi.\]
 We will assume below that $|\e| \leq \pi/6$. We consider the cell problem in $P_n$
\begin{equation}\label{e.vn}
\left\{\begin{aligned}
& -\nabla \cdot a(y,\grad v_n) =0 &\text{ in }& P_{n}  ,\\
&  v_n=\varphi( {y}) &\text{ on }&  \partial P_{n}.
\end{aligned}
\right.\end{equation}
The first step of the argument is to show, with error estimate, that the boundary layer limit of $v_n$ is close to the boundary layer limit of the problem
\begin{equation}\label{e.vI}
\left\{\begin{aligned}
& -\nabla \cdot a(\tfrac{y}{\tan \e},\grad v^I_n) =0 &\text{ in }& P_{n}  ,\\
&  v_n^I=\varphi_*(\xi, y \cdot \eta) &\text{ on }&  \partial P_{n}.
\end{aligned}
\right.\end{equation}
The solution $v_n^I$ approximates $v_n$, asymptotically as $\e \to 0$, starting at an intermediate scale $1 \ll R \ll 1/\e$ away from $\partial P_n$.  The argument is by direct comparison of $v_n$ with $v_\xi^s$ in their common domain.  

Since the problem \eref{vI} has a boundary layer of size uniform in $\e$ we can replace, again with small error, by a problem in a fixed domain
\begin{equation}\label{e.we}
\left\{\begin{aligned}
& -\nabla \cdot a(\tfrac{y}{\tan\e},\grad w_{\xi,\eta}^\e) =0 &\text{ in }& P_{\xi}  ,\\
&  w_{\xi,\eta}^\e=\varphi_*(\xi, y \cdot \eta) &\text{ on }&  \partial P_{\xi}.
\end{aligned}
\right.\end{equation}
We remark that for both \eref{vI} and \eref{we} we have not proven the existence of a boundary layer limit, rather we use Lemma~\ref{lem: exp tail nonlinear variant}.  For convenience we will state estimates on $\lim_{R\to\infty} v_n^I(Rn)$ or on $\lim_{R \to \infty} w_{\xi,\eta}(R\hat\xi)$, but technically we will mean that the estimate holds for every sub-sequential limit.

\begin{prop}\label{prop: int scale}
Let $\xi \in \integer^d \setminus \{0\}$ and $n = (\cos\e)\hat\xi - (\sin\e)\eta$ with $\e >0$ small and a unit vector $\eta \perp \xi$.
\begin{enumerate}[$(i)$]
\item \textup{(Nonlinear equations)} There is universal $\alpha \in (0,1)$ such that
\[ |\varphi_*(n) - \lim_{R \to \infty} w_{\xi,\eta}^\e(R\hat\xi)| \lesssim \|\grad \varphi\|_\infty |\xi|^\alpha \e ^\alpha,\]
where we mean that the estimate holds for any sub-sequential limit of $w_{\xi,\eta}^\e(R\hat\xi)$ as $R \to \infty$.  
\item \textup{(Linear systems)} For every $\alpha \in (0,1)$ and any $\nu>0$
\[ |\varphi_*(n) - \lim_{R \to \infty} w_{\xi,\eta}^\e(R\hat\xi)| \lesssim_{\alpha,\nu} [\varphi]_{C^{1,\nu}} |\xi|^\alpha \e ^\alpha,\]
where again we mean that the estimate holds for any sub-sequential limit of $w_{\xi,\eta}^\e(R\hat\xi)$ as $R \to \infty$.  
\end{enumerate}
\end{prop}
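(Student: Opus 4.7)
The plan is to bridge $v_n$ on $P_n$ to $w_{\xi,\eta}^\e$ on $P_\xi$ by an intermediate identification of the effective boundary data, exploiting $1$-homogeneity of $a(y,p)$ in $p$ (automatic in the linear-systems case). A chain-rule computation shows that the rescaling $\bar v_n(x) := v_n((x + Rn)/\tan\e)$ solves $-\nabla_x \cdot a((x + Rn)/\tan\e, \nabla_x \bar v_n) = 0$ on a half-space containing $P_\xi$, for any fixed $R$ chosen below, and has the same subsequential boundary-layer limits as $v_n$ along $\hat\xi$. By $\integer^d$-periodicity of $a$ in its fast variable, the operator $a((x+Rn)/\tan\e,\cdot)$ differs from the operator $a(x/\tan\e,\cdot)$ of $w_{\xi,\eta}^\e$ only by a non-lattice translation in the fast variable, which is invisible to any subsequential-limit statement of the type appearing in the proposition.

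The first step is the near-boundary comparison with the rational cell problem. Fix $y_0 \in \partial P_n$ and set $s_0 = y_0 \cdot \hat\xi$. In the ball $B_r(y_0)$, with $|\xi| \ll r \ll 1/\e$, the hyperplanes $\partial P_n$ and $\partial P_\xi^{s_0}$ both contain $y_0$ and stay within $O(\e r)$ of each other in the normal direction. On $K := B_r(y_0) \cap P_n \cap P_\xi^{s_0}$ the difference $v_n - v_\xi^{s_0}$ satisfies a linear uniformly elliptic equation (the system itself, or the linearization \eref{diffeqn} in the scalar case). The large-scale boundary H\"older estimates (Lemma~\ref{lem: bdry cont nonlinear} for the scalar case, Lemma~\ref{lem: bdry cont linear} for systems, both applicable since the boundary portions of $K$ are Lipschitz graphs of slope $O(\e)$), applied to $v_n$ and $v_\xi^{s_0}$ separately and combined with their common boundary data $\varphi$, yield $|v_n - v_\xi^{s_0}| \lesssim (\e r)^\alpha \|\varphi\|_{C^1}$ on $\partial K$. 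The $L^\infty$ estimate (Lemma~\ref{lem: system max} or Lemma~\ref{lem: hs comparison nonlinear}) propagates this into $K$, and the exponential convergence of $v_\xi^{s_0}$ (Lemma~\ref{lem: exp tail nonlinear}) gives, for any depth $R \leq r/2$ with $R \gg |\xi|$,
\[
v_n(y_0 + Rn) = \varphi_*(\xi, y_0 \cdot \hat\xi) + O\big((\e r)^\alpha \|\varphi\|_{C^1} + (\osc\varphi)\, e^{-cR/|\xi|}\big).
\]

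The second step translates this into the rescaled variable. For $x \in \partial P_\xi$, set $y := (x + Rn)/\tan\e$; writing $x \cdot n = -\tan\e(x\cdot\eta)$ from $x\cdot\hat\xi = 0$ and $\hat\xi = \cos\e\, n + \sin\e\, \eta$, one computes $y \cdot n = R/\tan\e - x\cdot\eta$ and $y_0 \cdot \hat\xi = \cos\e\, (x\cdot\eta)$, where $y_0$ is the projection of $y$ to $\partial P_n$. Substituting into the Step~1 estimate and absorbing the $\cos\e \to 1$ discrepancy via Lemma~\ref{lem: reg holder},
\[
\bar v_n(x) = \varphi_*(\xi, x \cdot \eta) + O\big((\e r)^\alpha \|\varphi\|_{C^1} + (\osc\varphi)\, e^{-cR/|\xi|}\big),
\]
so $\bar v_n$ matches the boundary data of $w_{\xi,\eta}^\e$ on $\partial P_\xi$ up to this error. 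The $L^\infty$ estimate on $P_\xi$ then bounds the difference of the two solutions, hence any subsequential boundary-layer limit, by the same expression. Balancing with $R = c|\xi|\log(1/\e)$ (to suppress the exponential) and $r \sim |\xi|$ (the minimal window for Step~1) produces $(\e|\xi|)^\alpha$, the asserted bound.

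The main obstacle is the tension between needing $r \gg |\xi|$ so that the rational cell problem's boundary layer converges exponentially at depth $R \leq r/2$, and the growth $(\e r)^\alpha$ of the geometric error; the tight choice $r \sim |\xi|$ is what produces the factor $|\xi|^\alpha$. The linear-systems case achieves every $\alpha < 1$ because Lemma~\ref{lem: bdry cont linear} supplies H\"older regularity up to the boundary for every such $\alpha$, and $\varphi_*(\xi,\cdot)$ has almost-Lipschitz regularity by Lemma~\ref{lem: reg holder}(ii); the nonlinear case is capped at the De Giorgi exponent of Lemma~\ref{lem: bdry cont nonlinear}. A secondary, bookkeeping-level issue is the non-lattice shift $Rn/\tan\e$ in the rescaled operator, which prevents a pointwise identification of $\bar v_n$ with $w_{\xi,\eta}^\e$; this is handled by the subsequential-limit phrasing of the statement together with $\integer^d$-periodicity of $a$.
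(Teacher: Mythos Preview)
Your overall strategy---compare $v_n$ to the rational-direction solution $v_\xi^{s_0}$ at intermediate depth, then rescale and compare with $w_{\xi,\eta}^\e$---matches the paper's. But there is a recurring gap: in both steps you try to propagate a boundary estimate into the interior via the half-space $L^\infty$ estimate/maximum principle (Lemma~\ref{lem: system max}, Lemma~\ref{lem: hs comparison nonlinear}), and the geometry does not permit this. In Step~1, your domain $K = B_r(y_0)\cap P_n\cap P_\xi^{s_0}$ has a spherical boundary portion $\partial B_r\cap P_n\cap P_\xi^{s_0}$ on which no bound better than $\osc\varphi$ is available, so the claimed $(\e r)^\alpha$ bound on all of $\partial K$ is false. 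In Step~2, the rescaled $\bar v_n$ is a solution on the half-space $\{x\cdot n > -R\}$, which does \emph{not} contain $P_\xi$ (the normals differ), so you cannot invoke an $L^\infty$ estimate on $P_\xi$ for the difference $\bar v_n - w_{\xi,\eta}^\e$. The paper circumvents both issues by working in the \emph{unbounded} cone $K(y_0)=(P_\xi+y_0)\cap P_n$ (respectively $P_n\cap P_\xi$), where the flat boundary carries a growing bound $|v_n-v_\xi^{s_0}|(x)\lesssim (\e|x-y_0|)^\alpha$, and then applies the boundary-vertex H\"older estimate (Lemma~\ref{lem: bdry cont nonlinear} in the scalar case, the cone Poisson-kernel bounds Lemmas~\ref{lem: PK bounds 1}--\ref{lem: PK bounds 2} for systems) to get $(R\e)^\alpha$ at depth $R$; see Lemmas~\ref{lem: vn vI} and~\ref{lem: vI we}. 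The two-step split via the intermediate problem $v_n^I$ on $P_n$ is precisely what lets the $L^\infty$ estimate be applied once (on $P_n$, between $\tilde v_n^I$ and $v_n^I$) and confines the remaining comparison to a genuine cone.

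A secondary gap: your claim that the non-lattice fast-variable shift $Rn/\tan\e$ is ``invisible to any subsequential-limit statement'' is unjustified. The operators $a((x+Rn)/\tan\e,\cdot)$ and $a(x/\tan\e,\cdot)$ are different, and solutions with identical boundary data need not share boundary-layer limits. The paper handles this concretely: replace $Rn$ by its representative modulo $\integer^d$, so the operators agree by periodicity, and absorb the resulting $O(\e)$ shift in the boundary argument of $\varphi_*(\xi,\cdot)$ using its H\"older regularity (Lemma~\ref{lem: reg holder}). Finally, your parameter choice $r\sim|\xi|$ with $R=c|\xi|\log(1/\e)$ is inconsistent with the constraint $R\le r/2$; the correct balance is $R\sim|\xi|\log(1/\e)$ with the error $(\e R)^\alpha$, and the logarithm is then absorbed into a slightly smaller exponent.
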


The first step is to compare the boundary layer limits of \eref{vn} and \eref{vI}.

\begin{lem}\label{lem: vn vI}
Fix any $x\in \partial P_{n }$, $1 \leq R \leq 1/\e$ and let $s=x\cdot\eta\tan \e$. 
\begin{enumerate}[$(i)$]
\item \textup{(Nonlinear equations)} There is universal $\alpha \in (0,1)$ such that
\[|v_n-v_\xi^{s}|(x+Rn)\lesssim \| \grad \varphi\|_\infty(R\e)^\alpha.\]
\item \textup{(Linear systems)} For every $\alpha \in (0,1)$
\[ |v_n-v_\xi^{s}|(x+Rn)\lesssim_\alpha\| \grad \varphi\|_{\infty}(R\e)^\alpha. \]
\end{enumerate}
\end{lem}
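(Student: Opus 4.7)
My plan is to compare $v_n$ and $v_\xi^{s}$ directly on the shared Lipschitz wedge $D := P_n \cap P_\xi^{x \cdot \hat\xi}$, which has dihedral angle $\pi - \e$ and whose codimension-two edge passes through $x$. Both functions satisfy the same equation on $D$, so the difference $w := v_n - v_\xi^{s}$ solves the same linear system (in the linear case) or, via \eqref{e.diffeqn}, satisfies a linear uniformly elliptic equation $-\grad \cdot (A(x) \grad w) = 0$ with bounded measurable coefficients (in the nonlinear case). Moreover $\|w\|_{L^\infty} \lesssim \osc \varphi$ by Lemma~\ref{lem: system max} or Lemma~\ref{lem: hs comparison nonlinear}, and $w(x) = 0$ since both boundary conditions coincide at $x$.

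First I would estimate $w$ on $\partial D$ near the edge. The decomposition $n = \cos\e \, \hat\xi - \sin\e \, \eta$ combined with $(y - x) \cdot n = 0$ gives, for $y \in \partial P_n$, the nearest point $y' \in \partial P_\xi^{x \cdot \hat\xi}$ at distance $|y - y'| = \tan\e \, |(y-x)\cdot \eta| \lesssim \e|y - x|$, and symmetrically on the other face. Since $v_n(y) = \varphi(y)$ and $v_\xi^{s}(y') = \varphi(y')$, the identity $w(y) = (\varphi(y) - \varphi(y')) + (v_\xi^{s}(y') - v_\xi^{s}(y))$ together with the boundary H\"older estimate — Lemma~\ref{lem: bdry cont linear} or Lemma~\ref{lem: bdry cont nonlinear} applied to $v_\xi^{s}$ at the flat boundary point $y'$ — yields the pointwise bound
\[ |w(y)| \lesssim \|\grad \varphi\|_\infty (\e|y - x|)^\alpha \quad \text{on } \partial D \cap \{|y - x| \lesssim 1/\e\},\]
beyond which only the crude estimate $|w| \lesssim \osc\varphi$ is available.

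To propagate this to the interior point $x + Rn$ with $R \leq 1/\e$, the linear systems case follows directly from Lemma~\ref{lem: PK bounds 2} applied to $K = D$ with $n_1 = n$, $n_2 = \hat\xi$, and $\delta = \e$: after recentering at $x$, the boundary datum of $w$ fits into the hypothesis with $\beta$ chosen slightly above $\alpha$, up to a harmless truncation outside $|y-x| > 1/\e$, giving $|w(x + Rn)| \lesssim_\alpha \|\grad \varphi\|_\infty (R\e)^\alpha$. In the nonlinear case Lemma~\ref{lem: PK bounds 2} is unavailable since $A$ is only bounded measurable, and I would instead rescale by a factor $\e$ about $x$ and run a dyadic iteration combining the interior H\"older regularity Theorem~\ref{thm: degiorgi} with the boundary H\"older estimate Lemma~\ref{lem: bdry cont nonlinear}, the latter invoked only at boundary points $y_j \in \partial D$ well separated from the edge where $D$ is locally a half-space with universal exterior density.

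The chief obstacle is precisely this nonlinear propagation step: the wedge $D$ has an exterior density at its edge that degenerates like $\e$, so Lemma~\ref{lem: bdry cont nonlinear} cannot be applied at the corner $x$ itself with constants uniform in $\e$. The dyadic iteration circumvents this by staying on the flat faces of $\partial D$ and using interior regularity to bridge scales, paying only a logarithmic number of steps to span from the unit scale down to scale $R\e$. No comparable difficulty arises in the linear case, where Lemma~\ref{lem: PK bounds 2} already packages the correct decay in intersections of nearby half-spaces.
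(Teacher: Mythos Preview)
Your approach is essentially the paper's: compare $v_n$ and $v_\xi^s$ on the wedge $D = P_n \cap P_\xi^{x\cdot\hat\xi}$, estimate the difference on $\partial D$ via the boundary H\"older continuity of each solution, then propagate inward. The linear case is handled exactly as you say, via Lemma~\ref{lem: PK bounds 2}.

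Your ``chief obstacle'' in the nonlinear case, however, rests on a geometric slip. The wedge $D$ has dihedral angle $\pi - \e$, so its \emph{exterior} occupies angle $\pi + \e$; hence the exterior density at the edge is $(\pi+\e)/(2\pi) \geq 1/2$, uniformly in $\e$. The domain $D$ is therefore regular in the sense of Lemma~\ref{lem: bdry cont nonlinear} with $\mu = 1/2$, and that lemma applies directly at the corner $x$ with constants independent of $\e$. This is precisely what the paper does: after establishing $|w(y)| \lesssim \|\grad\varphi\|_\infty (\e|y-x|)^\alpha$ on $\partial D$, it invokes Lemma~\ref{lem: bdry cont nonlinear} once more (now for the difference, which solves a bounded-measurable-coefficient equation via \eqref{e.diffeqn}) to conclude $|w(x+Rn)| \lesssim \|\grad\varphi\|_\infty (R\e)^{\alpha'}$ for a slightly smaller $\alpha'$. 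Your dyadic iteration avoiding the edge is unnecessary, though it would also work.
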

Before we go to the proof let us derive some consequences of the Lemma. Let's assume that $\|\grad \varphi\|_\infty \leq 1$ to simplify the exposition, the general inequalities can of course be derived by rescaling.  Combining Lemma~\ref{lem: exp tail nonlinear} with Lemma~\ref{lem: vn vI} we find that for any $ R \geq 1$,
\[|v_n(x+Rn)-\varphi_*(\xi, x \cdot \eta \tan \e)|\lesssim \left[(R\e)^\alpha+e^{-cR/|\xi|}\right] \ \hbox{ for } \ x \in \partial P_n.\]
Choosing $R = |\xi| \log \frac{1}{\e}$ we obtain,
\[ |v_n(x+Rn)-\varphi_*(\xi, x \cdot \eta \tan \e)|\lesssim  |\xi|^\alpha \e^\alpha \ \hbox{ for } \ x \in \partial P_n,\]
either for a slightly smaller universal $\alpha$ in the nonlinear case, or again for every $\alpha \in (0,1)$ in the case of linear systems.  

Now consider the rescaling
\[ \tilde{v}_n^I(y) = v_n(Rn + \tfrac{y}{\tan \e}) \ \hbox{ defined for } \ y \in P_n\]
which solves
\begin{equation}\label{e.vI2}
\left\{\begin{aligned}
& -\nabla \cdot a(Rn+\tfrac{y}{\tan\e},\grad \tilde{v}^I_n) =0 &\text{ in }& P_{n}  ,\\
&  |\tilde{v}_n^I - \varphi_*(\xi, y \cdot \eta)| \leq C |\xi|^\alpha \e^\alpha &\text{ on }&  \partial P_{n}.
\end{aligned}
\right.\end{equation}
This is almost the same as equation \eref{vI} solved by $v_n^I$.   First assume $Rn = 0 \bmod \integer^d$ to make things simple, then the $L^\infty$-estimate Lemma~\ref{lem: system max} (or the maximum principle) implies that
\begin{equation}\label{e.vvtildeest}
 \sup_{P_{n}}|v_n^I - \tilde{v}_n^I| \lesssim  |\xi|^\alpha \e^\alpha.
 \end{equation}
To be precise we should consider instead $\tilde{v}^I_n(Rn - [Rn] + \frac{y}{\tan \e})$ where $[Rn]$ is the representative of $Rn\bmod\integer^d$ in $[0,1)^d$. Then we would instead have,
\[ |\tilde{v}_n^I(y) - \varphi_*(\xi, y' \cdot \eta)| \lesssim |\xi|^\alpha \e^\alpha \ \text{ on } \   \partial P_{n} \]
for some $|y' - y| \leq \sqrt{d} \tan \e$.  Then applying the regularity of $\varphi_*$ from Lemma~\ref{lem: reg holder} we get the same estimate as before \eref{vvtildeest}. 

\begin{proof}[Proof of Lemma~\ref{lem: vn vI}]
Let us call the cone domains,
\[K(x):=(P_\xi+x)\cap P_{n} \ \hbox{ and } \ K_R(x)=K(x)\cap B_R(x), \]
 we may simply write $K,K_R$ if $x=0$. 
 Let $x_0 \in \partial P_n$, we compute using $n \cdot x_0 = 0$ and $n = (\cos \e )\hat\xi - (\sin \e) \eta$ that,
\[ x_0 \cdot \hat \xi = (x_0 \cdot \eta)\tan \e . \]
Let $x \in \partial K(x_0)$,  then $x \in \partial P_n$ (or $x \in \partial P_\xi + x_0$) and there exists $y \in  \partial P_\xi + x_0$ (or respectively $\partial P_n$) with,
\[ |x-y| \leq |x-x_0|\sin \e \leq \e |x-x_0|.\]
 {\it Nonlinear equations:} Applying the De Giorgi boundary continuity estimates Lemma~\ref{lem: bdry cont nonlinear} for small enough $\alpha \in (0,1)$ universal, for all $x \in \partial K(x_0)$,
\[ |v_\xi^s(x) - v_n(x)| \leq |v_\xi^s(x) - \varphi(y)|+|\varphi(y) - v_n(x)| \lesssim \|\grad \varphi\|_{\infty}\e^\alpha|x-x_0|^\alpha.\]
Now since $v_\xi^s(x) - v_n(x)$ is a difference of solutions we can apply the boundary continuity estimate from Lemma~\ref{lem: bdry cont nonlinear} again,
\[ |v_\xi^s(x) - v_n(x)| \lesssim \|\grad \varphi\|_{\infty}\e^\alpha|x-x_0|^\alpha \ \hbox{ for } \ x \in K(x_0)\]
with perhaps a slightly smaller $\alpha(d,\lambda)$.

\medskip

\noindent  {\it Linear systems:} 
We have, by almost the same argument as above now using instead Lemma~\ref{lem: flat reg}, for any $\alpha \in (0,1)$
\[ |v_\xi^s(x) - v_n(x)| \lesssim \|\grad \varphi\|_{\infty}\e^\alpha|x-x_0|^\alpha \ \hbox{ on } \ \partial K(x_0). \]
Now by the Poisson kernel bounds in $K(x_0)$, Lemma~\ref{lem: PK bounds 1} and Lemma~\ref{lem: PK bounds 2}, for a slightly smaller $\alpha$ and $\e$ sufficiently small depending on $\alpha$
\[ |v_\xi^s(x) - v_n(x)| \lesssim \|\grad \varphi\|_{\infty} \e^\alpha |x-x_0|^\alpha \ \hbox{ for } \ x \in K(x_0). \]
The remainder of the proof is the same as the case of scalar equations.

\end{proof}

To complete the proof of Proposition~\ref{prop: int scale} we just need to compare the solutions of \eref{vI} and \eref{vn}.  The width of the boundary layer is now of uniform size in $\e$ so this is not a problem, we will just need to use the boundary continuity estimate (Lemmas \ref{lem: bdry cont linear} and \ref{lem: bdry cont nonlinear}) and the continuity estimate of $\varphi_*(\xi,\cdot)$ Lemma~\ref{lem: reg holder}.
\begin{lem}\label{lem: vI we}
The following estimates hold for the boundary layers of $v_n^I$ and $w^\e_{\xi,\eta}$.
\begin{enumerate}[$(i)$]
\item \textup{(Nonlinear equations)} There is $\alpha \in (0,1)$ universal such that
\[ |\lim_{R \to \infty} v_n^I(Rn) - \lim_{R \to \infty} w_{\xi,\eta}^\e(R\hat\xi)| \lesssim \|\grad \varphi\|_\infty |\xi|^\alpha \e^\alpha ,\]
where technically we mean that the estimate holds for any pair of sub-sequential limits.
\item \textup{(Linear systems)} For every $\alpha \in (0,1)$ and any $\nu>0$
\[ |\lim_{R \to \infty} v_n^I(Rn) - \lim_{R \to \infty} w_{\xi,\eta}^\e(R\hat\xi)| \lesssim_{\alpha,\nu} [\varphi]_{C^{1,\nu}} |\xi|^\alpha \e^\alpha ,\]
where technically we mean that the estimate holds for any pair of sub-sequential limits.
\end{enumerate}
\end{lem}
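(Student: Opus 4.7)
The plan is to compare $v_n^I$ and $w^\e_{\xi,\eta}$ directly on the common cone domain $K = P_n \cap P_\xi$, since both solve the same equation $-\nabla \cdot a(\tfrac{y}{\tan\e}, \nabla \cdot) = 0$ with the matching trace $\varphi_*(\xi, y\cdot\eta)$ on their respective boundaries. First I would invoke Lemma~\ref{lem: exp tail nonlinear variant} on both half-space problems: the data $\varphi_*(\xi, y\cdot\eta)$ viewed on $\partial P_n$ (or $\partial P_\xi$) admits $d-1$ linearly independent periods—namely $\eta/|\xi|$ and any $d-2$ unit vectors orthogonal to both $\eta$ and the boundary normal—so the period parameter $M$ is universal. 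This shows that any subsequential limits $\mu_n^I$ and $\mu_{\xi,\eta}^\e$ exist and, for any $R \gtrsim \log(1/\e)$,
\[ |v_n^I(R\hat\xi) - \mu_n^I| + |w_{\xi,\eta}^\e(R\hat\xi) - \mu_{\xi,\eta}^\e| \lesssim \|\grad \varphi\|_\infty \, \e^{\alpha_1}, \]
with $\alpha_1 \in (0,1)$ universal in the nonlinear case and arbitrarily close to $1$ (up to a $\log$ factor) in the linear case.

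Next I would compare the two functions on $\partial K$. For $y \in \partial P_n \cap \partial K$ with $|y| \leq 1/\e$, there is a point $y' \in \partial P_\xi$ with $|y-y'| \lesssim \e|y|$, and
\[ v_n^I(y) - w_{\xi,\eta}^\e(y) = \big[\varphi_*(\xi, y\cdot\eta) - \varphi_*(\xi, y'\cdot\eta)\big] + \big[w_{\xi,\eta}^\e(y') - w_{\xi,\eta}^\e(y)\big]. \]
The first bracket is controlled by the H\"older regularity of $\varphi_*(\xi,\cdot)$ from Lemma~\ref{lem: reg holder}, and the second by the boundary H\"older estimate (Lemma~\ref{lem: bdry cont nonlinear} or Lemma~\ref{lem: flat reg}/\ref{lem: bdry cont linear}); both give an increment $\lesssim \|\grad\varphi\|_\infty (\e|y|)^{\alpha_2}$. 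The symmetric argument handles $y \in \partial P_\xi \cap \partial K$. Hence the difference $g := v_n^I - w_{\xi,\eta}^\e$, which solves a uniformly elliptic linear divergence-form equation with bounded measurable coefficients on $K$ (using the linearization \eref{diffeqn} in the nonlinear case), satisfies $|g| \lesssim \|\grad\varphi\|_\infty(\e|y|)^{\alpha_2} \wedge \osc\varphi$ on $\partial K$.

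It remains to propagate this boundary estimate to the interior apex ray, and this is the main obstacle. For linear systems the argument is clean: rescale by $\e$ (the cone $K$ and the class of admissible coefficients are preserved) and invoke the Poisson-kernel bound of Lemma~\ref{lem: PK bounds 2} to obtain $|g(R\hat\xi)| \lesssim \|\grad\varphi\|_\infty (\e R)^{\alpha_3}$ for every $R \leq 1/\e$. In the nonlinear setting no Poisson-kernel formula is available, and one must instead apply the De~Giorgi boundary estimate Lemma~\ref{lem: bdry cont nonlinear} at the apex of $K$ on a ball of radius $1/\e$: the $C^{\alpha_2}$-semi-norm of $g|_{\partial K}$ on $B_{1/\e}$ is $\lesssim \|\grad\varphi\|_\infty \e^{\alpha_2}$ and its oscillation is $\lesssim \osc\varphi$, so the rescaled version of the estimate yields the same decay $|g(R\hat\xi)| \lesssim \|\grad\varphi\|_\infty(\e R)^{\alpha_3}$ for some universal $\alpha_3 < \alpha_2$. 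Finally, choosing $R = \log(1/\e)$ (which is $\leq 1/\e$ for $\e$ small) and adding the three contributions
\[ |\mu_n^I - \mu_{\xi,\eta}^\e| \leq |v_n^I(R\hat\xi) - \mu_n^I| + |g(R\hat\xi)| + |w_{\xi,\eta}^\e(R\hat\xi) - \mu_{\xi,\eta}^\e| \lesssim \|\grad\varphi\|_\infty \, \e^{\alpha'} \]
for any $\alpha' < \min\{\alpha_1,\alpha_3\}$, which implies the claimed bound (the $|\xi|^\alpha$ factor is retained only for uniformity with Proposition~\ref{prop: int scale}). The self-similar boundary-H\"older iteration at the apex in the nonlinear case is the most delicate step, mirroring the strategy used in the proof of Lemma~\ref{lem: vn vI}.
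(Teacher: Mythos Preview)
Your approach matches the paper's: compare $g = v_n^I - w^\e_{\xi,\eta}$ on the common cone $K = P_n \cap P_\xi$, bound $|g(y)| \lesssim \|\grad\varphi\|_\infty(\e|y|)^{\alpha}$ on $\partial K$ via boundary H\"older regularity of each solution together with Lemma~\ref{lem: reg holder}, propagate into $K$ (De~Giorgi at the apex in the nonlinear case, Lemma~\ref{lem: PK bounds 2} for systems), and then invoke Lemma~\ref{lem: exp tail nonlinear variant} with a logarithmic choice of $R$ to pass to the subsequential limits. Your observation that the period $M$ of $\varphi_*(\xi,y\cdot\eta)$ is $\lesssim 1$ (so the $|\xi|^\alpha$ factor is not forced by this step) is in fact sharper than the paper, which writes $e^{-R_0/|\xi|}$ and chooses $R_0 = |\xi|\log\tfrac{1}{|\xi|\e}$.

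One wording issue to fix: the pointwise growth $|g(y)|\lesssim(\e|y|)^{\alpha_2}$ on $\partial K$ does \emph{not} give $[g|_{\partial K}]_{C^{\alpha_2}(B_{1/\e})}\lesssim \e^{\alpha_2}$ (neither $v_n^I$ nor $w^\e_{\xi,\eta}$ has small H\"older seminorm along $\partial K$). What the iteration behind Lemma~\ref{lem: bdry cont nonlinear} actually consumes is only the oscillation on nested balls at the apex, $\osc_{\partial K\cap B_r(0)} g \lesssim (\e r)^{\alpha_2}$, and this \emph{does} follow from the pointwise bound; so your argument goes through once you replace the seminorm claim by this oscillation statement.
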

\begin{proof}
We compare the two solutions in their common domain.  Calling as before $K = P_n \cap P_\xi$ and,
\[ u = v_n^I - w_{\xi,\eta}^\e.\]

\medskip

 \noindent {\it Nonlinear equations:} We have that
\[ - \grad \cdot( A(x) \grad u )= 0 \ \hbox{ in } \ K \ \hbox{ with some $\lambda \leq A(x) \leq 1$ as in \eref{diffeqn}}.\]
We compute the error on $\partial K$ in the same way that we did in Lemma~\ref{lem: vn vI}.    Using Lemma~\ref{lem: bdry cont nonlinear} we find for $x \in \partial K$,
\[ |u(x)| = |v_n^I(x) - w_{\xi,\eta}^\e(x)| \lesssim \|\varphi_*(\xi,\cdot)\|_{C^{\alpha'}}\e^{\alpha}|x|^\alpha  \lesssim \|\grad \varphi\|_\infty \e^\alpha |x|^\alpha,\]
where $\alpha'$ is the, universal, continuity modulus from Lemma~\ref{lem: reg holder} and $\alpha < \alpha'$.  Next we use the De Giorgi boundary continuity estimate, Lemma~\ref{lem: bdry cont nonlinear} to obtain, again with a slightly smaller $\alpha$,
\begin{equation}\label{e.bc we}
 |u(x)|\lesssim \|\grad \varphi\|_\infty \e^\alpha |x|^\alpha \ \hbox{ for } \ x \in K. 
 \end{equation}
Next we use that the size of the boundary layers for $v_n^I$ and $w_{\xi,\eta}^\e$ are uniformly bounded in $\e$, via Lemma~\ref{lem: exp tail nonlinear variant}, to find for all $R_0 \geq 1$,
\[ \sup_{y \in \partial P_n}|v_n^I(y+R_0n) - \lim_{R \to \infty} v_n^I(Rn)| \lesssim \|\varphi_*(\xi,\cdot)\|_{C^{\alpha'}} \e^\alpha + (\osc \varphi_*) e^{-R_0/|\xi|},\]
where again we mean that the estimate holds for any sub-sequential limit of $v_n^I(Rn)$.  An analogous estimate holds for $w_{\xi,\eta}^\e$ replacing $R n$ with $R\hat\xi$.  Using our assumption that $\e \leq  \pi /4$ we have $n \cdot \hat \xi \geq 1/\sqrt{2}$ and so we have,
\begin{align}\label{e.bdry layer n xi}
\max \{ |v_n^I(R_0\hat\xi) - \lim_{R \to \infty} v_n^I(Rn)|,&|w_{\xi,\eta}^\e(R_0\hat\xi) - \lim_{R \to \infty} w_{\xi,\eta}(R\hat\xi)|\} \lesssim  \\
&\|\varphi_*(\xi,\cdot)\|_{C^{\alpha'}} \e^\alpha + (\osc \varphi_*) e^{-R_0/|\xi|}. \notag
\end{align}
Finally we combine \eref{bc we} with \eref{bdry layer n xi}, choosing $R_0 = |\xi| \log \frac{1}{|\xi|\e}$, to find,
\begin{align*}
 |\lim_{R \to \infty} v_n^I(Rn) - \lim_{R \to \infty} w_{\xi,\eta}^\e(R\hat\xi)| &\leq |v_n^I(R_0\hat\xi) - w_{\xi,\eta}^\e(R_0\hat\xi)| + C\|\grad \varphi\|_\infty |\xi|^\alpha\e^\alpha\\
 &\lesssim \|\grad \varphi\|_\infty \e^\alpha R_0^\alpha \\
 & \lesssim \|\grad \varphi\|_\infty |\xi|^\alpha \e^\alpha (\log\tfrac{1}{|\xi|\e})^\alpha.
 \end{align*}
 Making $\alpha$ slightly smaller we can remove the logarithmic term.
 
 \medskip 
 
 \noindent {\it Linear systems:} We have that
\[ - \grad \cdot( A(x) \grad u )= 0 \ \hbox{ in } \ K.\]
Using Lemma~\ref{lem: flat reg} we find, for $x \in \partial K$ and any $\nu >0$,
\[ |u(x)| = |v_n^I(x) - w_{\xi,\eta}^\e(x)| \lesssim_\alpha \|\grad \varphi_*(\xi,\cdot)\|_{\infty}\e^\alpha |x|^\alpha \lesssim_\nu \|\grad \varphi\|_{C^{0,\nu}} \e^\alpha |x|^\alpha.\]
  By the Poisson kernel bounds in $K$, Lemma~\ref{lem: PK bounds 1} and Lemma~\ref{lem: PK bounds 2}, we have for a slightly smaller $\alpha \in (0,1)$ and $\e$ sufficiently small depending on $\alpha$
\[ |u(x)| \lesssim_\alpha [\varphi]_{C^{1,\nu}}\e^\alpha |x|^\alpha \ \hbox{ for } \ x \in K. \]
The remainder of the proof is the same as the case of scalar equations.
\end{proof}

\subsection{Interior homogenization of the intermediate scale problem}\label{subsect homo inter scale} We take $\e\rightarrow 0$ in \eref{we} and derive the second cell problem,
\begin{equation}\label{cell2e}
\left\{\begin{aligned}
& -\nabla  \cdot a( \tfrac{x}{\tan\e},\nabla w_{\xi,\eta}^\e)=0 &\text{ in }&  P_{\xi},\\
& w_{\xi,\eta}^\e(x)=\varphi_{*}(\xi,x\cdot\eta) &\text{ on }& \partial P_{\xi} 
\end{aligned}
\right.
\end{equation}
which homogenizes to
\begin{equation}\label{cell2}
\left\{\begin{aligned}
& -\nabla  \cdot a^0( \nabla w_{\xi,\eta})=0 &\text{ in }&  P_{\xi},\\
& w_{\xi,\eta}(x)=\varphi_{*}(\xi,x\cdot\eta) &\text{ on }& \partial P_{\xi} .
\end{aligned}
\right.
\end{equation}
where $a^0$ is the homogenized operator associated with $a(\tfrac{x}{\e},\cdot)$.

We make the definition
\[L(\xi,\eta)=\lim_{R\rightarrow \infty} w_{\xi,\eta}(x+R\xi).\]
As we will show below $L(\xi,\cdot)$ is the limiting $0$-homogeneous profile of $\varphi_*$ at the direction $\xi$,
\[ \lim_{n \to \hat\xi} \varphi_*(n) = L(\xi,\eta) \ \hbox{ for $n$ irrational } \ n \to \hat \xi \ \hbox{ and } \  \frac{ \hat\xi - n}{|\hat \xi - n|} \to \eta.\]
This characterization is the first main result of the paper Theorem~\ref{main1}.

We make a further remark about the second cell problem in \eref{cell2}.  It is straightforward to see that $w_{\xi,\eta}$ is actually a function only of two variables $x \cdot \xi$ and $x \cdot \eta$. The boundary data $\varphi_*(\xi,x \cdot \eta)$ is invariant with respect to translations which are perpendicular to both $\xi$ and $\eta$, and so by uniqueness the solution $w_{\xi,\eta}$ is invariant in those directions as well. Note that we are using the spatial homogeneity of the operator here, the same is not true of $w^\e_{\xi,\eta}$.  This property was useful in \cite{Feldman:2015aa} since solutions of nonlinear non-divergence form elliptic problems in dimension $d=2$ have better regularity properties.  Although we do not use this in a significant way here, we point it out anyway since it could be potentially useful in the future.

Now we state and prove the quantitative version of Theorem~\ref{main1}:
\begin{thm}\label{mainineq}
Let $\xi \in \integer^d \setminus \{0\}$ be irreducible and $n = (\cos \e ) \hat \xi - (\sin \e ) \eta$ be an irrational direction then,
\begin{enumerate}[$(i)$]
\item (Nonlinear equations) There is a universal $\alpha \in (0,1)$ such that
\[|\varphi_*(n)-L(\xi,\eta)|\lesssim \| \grad \varphi\|_{\infty}|\xi|^\alpha\e^\alpha. \]
\item (Linear systems) For every $\alpha \in (0,1)$
\[  |\varphi_*(n)-L(\xi,\eta)|\lesssim_\alpha [ \varphi]_{C^5}|\xi|^\alpha\e^\alpha \]
\end{enumerate}
\end{thm}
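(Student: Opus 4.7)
The plan is to combine Proposition~\ref{prop: int scale} (intermediate scale asymptotic) with Proposition~\ref{prop: half-space hom} (interior homogenization of the oscillatory second cell problem) through the triangle inequality
\[ |\varphi_*(n) - L(\xi,\eta)| \leq \bigl|\varphi_*(n) - \lim_{R\to\infty} w^\e_{\xi,\eta}(R\hat\xi)\bigr| + \bigl|\lim_{R\to\infty} w^\e_{\xi,\eta}(R\hat\xi) - L(\xi,\eta)\bigr|. \]
The first term is exactly what Proposition~\ref{prop: int scale} bounds, already yielding the full claimed rate: $\|\grad\varphi\|_\infty|\xi|^\alpha\e^\alpha$ with some universal $\alpha$ in the nonlinear case, and $[\varphi]_{C^{1,\nu}}|\xi|^\alpha\e^\alpha \leq [\varphi]_{C^5}|\xi|^\alpha\e^\alpha$ for any $\alpha<1$ in the linear case. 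The argument will show the second term is strictly smaller, so the first term alone drives the final rate.

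To bound the second term I would apply Proposition~\ref{prop: half-space hom}, after dealing with a scaling mismatch: that proposition is stated for boundary data with unit periods, but $\varphi_*(\xi,x\cdot\eta)$ is $1/|\xi|$-periodic in the direction $\eta$. Setting $\tilde w(y):=w^\e_{\xi,\eta}(y/|\xi|)$ converts the oscillatory second cell problem into a half-space problem on $P_\xi$ with unit-periodic boundary data $\tilde\psi(y):=\varphi_*(\xi,y\cdot\eta/|\xi|)$ and new homogenization scale $\delta:=|\xi|\tan\e$, homogenizing to $\tilde w^0(y):=w_{\xi,\eta}(y/|\xi|)$ (using the $1$-homogeneity of $a^0$). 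The H\"older semi-norm of the boundary data scales as $[\tilde\psi]_{C^\beta} = |\xi|^{-\beta}[\varphi_*(\xi,\cdot)]_{C^\beta}$, which I control via the regularity of $\varphi_*(\xi,\cdot)$: Lemma~\ref{lem: reg holder}(i) gives $[\varphi_*(\xi,\cdot)]_{C^\beta} \lesssim \|\grad\varphi\|_\infty$ for a universal $\beta$ in the nonlinear case, and Lemma~\ref{lem: reg smooth} gives $[\varphi_*(\xi,\cdot)]_{C^4} \lesssim \log^4(1+|\xi|)[\varphi]_{C^5}$ in the linear case.

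Inserting these into Proposition~\ref{prop: half-space hom}(i) and scaling back, the second term in the nonlinear case is bounded by $\|\grad\varphi\|_\infty|\xi|^{-\beta}\delta^{\alpha(\beta)}$, which equals $\|\grad\varphi\|_\infty|\xi|^{\alpha(\beta)-\beta}(\tan\e)^{\alpha(\beta)}$; after replacing the output H\"older exponent by $\min(\alpha(\beta),\beta)$ so that the power of $|\xi|$ is nonpositive, this is at most $\|\grad\varphi\|_\infty\e^\alpha$ with no positive $|\xi|$-factor. The parallel calculation with Proposition~\ref{prop: half-space hom}(ii) in the linear case produces an even sharper bound of the form $|\xi|^{-3}[\varphi]_{C^5}\e$ times polylogarithmic factors in $|\xi|$ and $1/\e$. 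Either way the second term is dominated by the intermediate scale bound $|\xi|^\alpha\e^\alpha$, and the triangle inequality concludes. The main obstacle is the $|\xi|$-bookkeeping across the rescaling: the argument succeeds precisely because the (uniform-in-$|\xi|$) H\"older regularity of $\varphi_*(\xi,\cdot)$ supplies the factor $|\xi|^{-\beta}$ needed to cancel the $|\xi|^\alpha$ coming from $\delta^\alpha=(|\xi|\tan\e)^\alpha$.
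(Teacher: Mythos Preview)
Your proposal is correct and follows essentially the same route as the paper: split via the triangle inequality, invoke Proposition~\ref{prop: int scale} for the first term, and apply Proposition~\ref{prop: half-space hom} together with the regularity of $\varphi_*(\xi,\cdot)$ from Lemmas~\ref{lem: reg holder} and~\ref{lem: reg smooth} for the second. Your treatment is in fact more explicit than the paper's: you spell out the rescaling $y\mapsto y/|\xi|$ needed to put the $1/|\xi|$-periodic boundary data into the unit-period normalization under which Proposition~\ref{prop: half-space hom} is stated, and you track the resulting $|\xi|$-powers to verify that the homogenization term is dominated by the intermediate-scale term; the paper simply asserts the bound $[\varphi_*]_{C^\beta}\e^\alpha$ and absorbs the $\log(1+|\xi|)$ factors by shrinking $\alpha$.
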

\begin{proof}
The ingredients have all been established elsewhere, we just need to combine them.  By Proposition~\ref{prop: half-space hom}, homogenization of problems in half-space type domains, for nonlinear equations,
\[ \sup_{P_\xi} |w_{\xi,\eta} - w^\e_{\xi,\eta}| \lesssim [\varphi _*]_{C^\beta} \e^\alpha \lesssim \| \grad \varphi\|_{\infty} \e^\alpha \ \hbox{ for some universal $\beta,\alpha \in (0,1)$,} \]
or in the linear systems case,
\[ \sup_{P_\xi} |w_{\xi,\eta} - w^\e_{\xi,\eta}| \lesssim_\alpha  [ \varphi_*]_{C^4} \e^\alpha \lesssim  \ \hbox{ for every $\alpha \in (0,1)$.}  \]
We have used Lemmas~\ref{lem: reg holder} and \ref{lem: reg smooth} to obtain the necessary regularity estimates of $\varphi_*(\xi,\cdot)$.  The factors of $\log(1+|\xi|)$ in Lemma~\ref{lem: reg smooth} can be absorbed by making $\alpha$ slightly smaller.
\end{proof}

\subsection{Proofs of regularity estimates of \texorpdfstring{$\varphi_*$}{Lg}}  We return to prove the regularity estimates of $\varphi_*$ Lemma~\ref{lem: reg holder} and Lemma~\ref{lem: reg smooth}.  The H\"{o}lder regularity Lemma~\ref{lem: reg holder} is relatively straightforward, while the higher regularity Lemma~\ref{lem: reg smooth} requires some more careful estimates.
\begin{proof}[Proof of Lemma~\ref{lem: reg holder}]
We will show an upper bound for $|\varphi_*(\xi,h) - \varphi_*(\xi,0)|$ with $h<0$, the proof works also for nonzero $s$ and $h \in \real$.   Consider $v_\xi^0$ a solution in $P_\xi$ and $v_\xi^h$ a solution in $P_\xi + h \hat \xi \supset P_\xi$.  By the boundary continuity estimates for $v_\xi^h$ for every $y \in \partial P_\xi$,
\[ |v_\xi^h(y) - v_\xi^0(y)| = |v_\xi^h(y) - \varphi(y)| \leq |v_\xi^h(y) - \varphi(y - h \hat \xi)| +  \| \grad \varphi\|_\infty h \leq C\| \grad\varphi\|_\infty h^\alpha,\] 
for some $\alpha \in (0,1)$ by Lemma~\ref{lem: bdry cont nonlinear}.  For the case of linear systems we have similarly,
\[  |v_\xi^h(y) - v_\xi^0(y)|  = |v_\xi^h(y) - \varphi(y)| \leq C [ \varphi]_{C^{1,\nu}} h \]
for any $\nu>0$ by the boundary gradient estimates for smooth coefficient linear systems.  Then the maximum principle, or respectively the $L^\infty$ estimate for systems Lemma~\ref{lem: system max}, implies the same bound holds in all of $P_\xi$ and therefore also for the boundary layer limits.
\end{proof}
\begin{proof}[Proof of Lemma~\ref{lem: reg smooth}]  In order to get estimates on higher derivatives of $v^s_\xi$ in $s$ the method for Lemma~\ref{lem: reg holder} doesn't work, we need to differentiate in the equation. Since we only consider only one normal direction $\xi \in \integer^d \setminus \{0\}$ we drop the dependence $v^s = v^s_\xi$ on $\xi$.  We denote derivatives with respect to $s$ by $\partial$ and then,
\begin{equation}\label{eqn: s deriv}
\left\{
\begin{array}{ll}
 - \grad \cdot (A(x + s \hat \xi) \grad \partial^k v^s) = \grad \cdot f & \hbox{ in } \ P_\xi \vspace{1.5mm}\\
 \partial^kv^s = ( \hat \xi \cdot \grad)^k \varphi(x + s \hat \xi) & \hbox{ on } \ \partial P_\xi,
\end{array}
\right.
\end{equation}
where $f$ involves derivatives $\partial^jv$ for $j <k$,
\[ f = \sum_{j=0}^{k-1}  { k \choose j}(\hat \xi \cdot \grad)^{k-j}A(x + s \hat \xi) \grad \partial^jv^s.\]
Let $p>d$ arbitrary but fixed.  We will suppose, inductively, that we can prove for any $R\geq 0$ and every $j <k$,
\[  \sup_{y \in \partial P_\xi, R' \geq R}\|\grad \partial^j v^s\|_{L^p_{avg}(B_{R'/2}(y+R'\hat\xi))}  \leq C_j [ \varphi ]_{C^{j+1,\nu}}\frac{1}{R}\log^j(1+|\xi|) e^{-c_jR/|\xi|} .\]
the constants depend on $j,[A]_{C^j}$ and universal parameters.  The case $R \leq 1$ corresponds basically to an $L^\infty$ bound on $P_\xi$.

Then by Lemma~\ref{lem: rhs infty}
\begin{equation}\label{e.induct Linfty}
 \|\partial^kv^s\|_{L^\infty(P_\xi)} \leq C\|( \hat \xi \cdot \grad)^k \varphi\|_{\infty} + C\log^k(1+|\xi|)[\varphi]_{C^{k,\nu}}.
 \end{equation}
Furthermore, by Lemma~\ref{lem: layer limit rhs}, $\partial^kv^s$ has a boundary layer limit $\mu_k = \frac{d^k}{ds^k}\varphi_*(\xi,s)$ with,
\[|\partial^kv^s - \mu_k| \leq C\log^k(1+|\xi|)[\varphi]_{C^{k,\nu}}e^{-cR/|\xi|}.\]
Now we aim to establish the inductive hypothesis. The following argument will also establish the base case when $j=0$.  First in the case $R \leq 1$. This follows from \eref{induct Linfty} and the up to the boundary gradient estimates (Lemma~\ref{lem: flat reg}), 
\[ \|\grad \partial^kv^s\|_{L^\infty(P_\xi)} \leq C\|( \hat \xi \cdot \grad)^k \varphi\|_{C^{1,\nu}} +C \log^k(1+|\xi|)[\varphi]_{C^{k,\nu}} \leq C\log^k(1+|\xi|)[\varphi]_{C^{k+1,\nu}} \]
In the case $R \geq 1$, by the Avellaneda-Lin large scale interior $W^{1,p}$ estimates and the inductive hypothesis,
\begin{align*}
 \|\grad \partial^kv^s\|_{L^p_{avg}(B_{R/2}(y+R\hat\xi))} &\leq  C\frac{1}{R} \osc_{B_{3R/4}(y+R\hat\xi)}\partial^kv^s + \|f\|_{L^p_{avg}(B_{3R/4}(y+R\hat\xi))} \\
 &\leq  C\frac{1}{R}\log^k(1+|\xi|)[\varphi]_{C^{k,\nu}}e^{-cR/|\xi|}.
 \end{align*}
Combining the cases $R\leq 1$ and $R \geq 1$ establishes the inductive hypothesis for $j=k$. The bound on $\|\partial^kv^s\|_{L^\infty}$ and hence on the boundary layer limit $\mu_k$, which is also a consequence of the induction, is the desired result.
\end{proof}
 
\section{Continuity Estimate for Homogenized Boundary Data Associated with Linear Systems}\label{sec: continuity}
In this section we use the limiting structure at rational directions established above to prove that the homogenized boundary condition associated with a linear system is continuous. We recall the second cell problem, let $\xi \in \integer^d \setminus \{0\}$ a rational direction and suppose that we have a sequence of directions $n_k \to \hat\xi$ such that,
 \[ \frac{\hat\xi-n_k}{|\hat \xi-n_k |} \to \eta  \ \hbox{ a unit vector with } \ \eta \perp \xi.\]
  Then the limit of $\varphi_*(n_k)$ is determined by the following second cell problem,
 \begin{equation}
\left\{
\begin{array}{ll}
 - \grad \cdot (A^0\grad w_{\xi,\eta}) = 0 & \hbox{ in } P_\xi \vspace{1.5mm}\\
 w_{\xi,\eta} = \varphi_*(\xi,x \cdot \eta) & \hbox{ on } \partial P_\xi,
\end{array}
\right. \ \hbox{ then } \ \lim_{k \to \infty} \varphi_*(n_k) = \lim_{R \to \infty} w_{\xi,\eta}(R\xi).
\end{equation}
Where $A^0$, constant, is the homogenized matrix associated with $A(\frac{\cdot}{\e})$ and $\varphi_*(\xi,\cdot)$ defined in \eref{phi*} is a $1/|\xi|$ periodic function on $\real$ (see Lemma 2.9 in \cite{Feldman:2015aa} where the period of $\varphi_*$ is explained).

First we state the qualitative result, identifying the limit and showing continuity at rational directions.  Continuity of $\varphi_*$ at the irrational directions has been established, for example in Prange \cite{Prange:2013aa}.  Combining those results shows that $\varphi_*$ extends to a continuous function on $S^{d-1}$.
\begin{lem}\label{lem rational dir limit}
Let $\xi \in \integer^d \setminus \{0\}$ then for any sequence $n_k \to \hat \xi$,
\[ \lim_{k \to \infty} \varphi_*(n_k) = |\xi| \int_0^{1/|\xi|} \varphi_*(\xi,t) \ dt.\]
\end{lem}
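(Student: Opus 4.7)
The plan is to apply Theorem~\ref{mainineq} to reduce the claim to a statement about the second cell problem \eqref{cell2}, then exploit linearity and translation invariance of the constant coefficient operator $A^0$ to identify $L(\xi,\eta)$ with a mean value of $\varphi_*(\xi,\cdot)$. By compactness of $S^{d-1}$, every subsequence of $(n_k)$ admits a sub-subsequence along which $(\hat\xi - n_k)/|\hat\xi - n_k| \to \eta$ for some unit vector $\eta \perp \xi$, and along any such sub-subsequence Theorem~\ref{mainineq} gives $\lim_k \varphi_*(n_k) = L(\xi,\eta)$. Hence it suffices to prove that $L(\xi,\eta)$ is independent of $\eta$ and equals $|\xi| \int_0^{1/|\xi|} \varphi_*(\xi,t)\,dt$.

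Next, I would study the functional $\Lambda(g) := \lim_{R\to\infty} w_g(R\hat\xi)$, where $w_g$ solves $-\nabla \cdot (A^0 \nabla w_g) = 0$ in $P_\xi$ with boundary data $w_g(x) = g(x\cdot \eta)$ on $\partial P_\xi$ and $g$ is $1/|\xi|$-periodic. The limit exists by Lemma~\ref{lem: exp tail nonlinear}: the boundary data is periodic with respect to $d-1$ linearly independent tangential vectors (one of length $1/|\xi|$ in the direction $\eta$, together with $d-2$ trivial periods in the directions orthogonal to both $\xi$ and $\eta$). Linearity of the system implies $\Lambda$ is linear in $g$, and Lemma~\ref{lem: system max} gives the bound $|\Lambda(g)| \leq C\|g\|_\infty$. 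The essential ingredient is translation invariance: since $A^0$ is constant and $\eta \perp \xi$, the translate $w_g(x + h\eta)$ solves the same system on $P_\xi$ with boundary data $g(x \cdot \eta + h)$, and the uniform-in-tangential-direction convergence in Lemma~\ref{lem: exp tail nonlinear} yields $\lim_{R\to\infty} w_g(R\hat\xi + h\eta) = \Lambda(g)$. Hence $\Lambda(g(\cdot + h)) = \Lambda(g)$ for every $h \in \real$.

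The Riesz representation theorem on the space of $1/|\xi|$-periodic continuous functions then identifies $\Lambda$ explicitly: any bounded linear functional is given by integration against a (vector valued) Radon measure, and translation invariance forces that measure to be a multiple of Lebesgue measure. Testing on constants $g \equiv c$, for which the unique bounded solution is $w_g \equiv c$ and thus $\Lambda(g) = c$, fixes the scaling so that $\Lambda(g) = |\xi| \int_0^{1/|\xi|} g(t)\,dt$. Applying this with $g = \varphi_*(\xi,\cdot)$ gives the claimed formula and automatically shows independence of $\eta$. The one subtle step is the verification of translation invariance at the level of the boundary layer limit, which reduces to the uniform convergence of the boundary layer tail established in Lemma~\ref{lem: exp tail nonlinear}; everything else is a consequence of linearity, the $L^\infty$ estimate, and the translation symmetry of the constant-coefficient homogenized operator.
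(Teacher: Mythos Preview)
Your proposal is correct and follows essentially the same approach as the paper: reduce to the second cell problem, then observe that the boundary-layer-limit functional on periodic data is linear, bounded (via the $L^\infty$ estimate), and translation invariant (since $A^0$ is constant), whence the Riesz representation theorem and uniqueness of Haar measure identify it with the average. The only cosmetic difference is that the paper first rotates and rescales to work in $\real^d_+$ with $\integer^{d-1}$-periodic data, whereas you work directly in $P_\xi$ with $1/|\xi|$-periodic one-dimensional data; your explicit subsequence/compactness reduction at the start is also slightly more detailed than the paper's version.
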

From this we know that $L(\xi,\eta)$, defined in Section \ref{subsect homo inter scale}, is independent of $\eta$ in the linear case. And we will simply write $L(\xi)=L(\xi,\eta)$.
\begin{proof}
By rotation and rescaling we can reduce to proving that the boundary layer limit associated with the half-space problem,
 \begin{equation}
\left\{
\begin{array}{ll}
 - \grad \cdot (A^0\grad v) = 0 & \hbox{ in } \real^d_+ \vspace{1.5mm}\\
v = g(x_1,\dots,x_{d-1}) & \hbox{ on } \partial \real^d_+,
\end{array}
\right.
\end{equation}
where $A^0$ is a constant and uniformly elliptic and $g$ is a $\integer^{d-1}$-periodic continuous function $\real^{d-1} \to \real^N$ is,
\[ \lim_{R \to \infty} v(Re_d) = \int_{[0,1]^{d-1}} g(x) \ dx.\]
Consider the (linear) map $T : C(\mathbb{T}^{d-1}) \to \real^N$ mapping $g \mapsto \lim_{R \to \infty} v(Re_d)$.  The $L^\infty$ estimates Lemma~\ref{lem: system max} imply that $T$ is continuous.  Since $A^0$ is constant translating $g$ parallel to $\partial \real^d_+$ just translates the solution $v$ and so we also get translation invariance, for any $y \in \mathbb{T}^{d-1}$,
\[ T g(\cdot - y) = Tg.\]
The Riesz representation theorem implies that $T g = \int_{\mathbb{T}^{d-1}} g(x) \ d\mu(x)$ for some (vector-valued) measure $\mu$, then by the translation invariance, uniqueness of Haar measure, and that $T 1 = 1$ we obtain the result.
\end{proof}

The next result is quantitative, the argument uses the Dirichlet approximation theorem as in \cite{Feldman:2015aa}.
\begin{thm}
Let $\varphi_*(\cdot)$ be defined the boundary layer limit associated with \eref{cellmain} defined for $n \in S^{d-1} \setminus \real \integer^d$. Then for every $\alpha < 1/d$ and all $n_1,n_2 \in S^{d-1} \setminus \real \integer^d$,
\[ |\varphi_*(n_1) - \varphi_*(n_2)| \lesssim_\alpha \|\varphi\|_{C^5}|n_1 - n_2|^{\alpha}\]
\end{thm}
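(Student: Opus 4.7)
The plan is to interpolate between the two irrational directions $n_1,n_2$ via a well-chosen rational direction $\xi$, using Theorem~\ref{mainineq}(ii) to compare $\varphi_*(n_j)$ to the common limit $L(\xi)$. The crucial input specific to the linear case is Lemma~\ref{lem rational dir limit}, which shows that $L(\xi,\eta)$ is independent of the approach direction $\eta$: this is what makes the triangle inequality
\[ |\varphi_*(n_1)-\varphi_*(n_2)| \leq |\varphi_*(n_1)-L(\xi)| + |\varphi_*(n_2)-L(\xi)|\]
into a useful estimate even when $n_1,n_2$ approach $\hat\xi$ from different sides.

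Fix $0 < \alpha < 1/d$ and choose an auxiliary exponent $\alpha' \in (d\alpha,1)$; this is possible precisely because $\alpha < 1/d$. Let $\delta := |n_1-n_2|$, which we may assume is as small as we wish. The first step is Dirichlet's simultaneous approximation theorem applied to $n_1$: for any free parameter $M\geq 1$, there exists an irreducible $\xi \in \integer^d \setminus \{0\}$ with
\[ |\xi| \leq M \qquad \text{and} \qquad |\hat\xi - n_1| \leq \frac{C}{|\xi| M^{1/(d-1)}}.\]
(Standard derivation: rotate so $(n_1)_d\geq 1/\sqrt d$, apply simultaneous Dirichlet to the ratios $(n_1)_i/(n_1)_d$ with parameter $N = M^{1/(d-1)}$, and then take the primitive part of the resulting lattice vector.)

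Next, writing $n_j = (\cos\e_j)\hat\xi - (\sin\e_j)\eta_j$ with $\eta_j \perp \xi$ and $\e_j \asymp |n_j - \hat\xi|$, Theorem~\ref{mainineq}(ii) at exponent $\alpha'$ together with Lemma~\ref{lem rational dir limit} yield
\[ |\varphi_*(n_j) - L(\xi)| \lesssim_{\alpha'} \|\varphi\|_{C^5}\, |\xi|^{\alpha'} \e_j^{\alpha'}, \qquad j=1,2.\]
For $j=1$ the Dirichlet bound gives $|\xi|^{\alpha'}\e_1^{\alpha'} \lesssim M^{-\alpha'/(d-1)}$, in which the $|\xi|$ factors conveniently cancel. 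For $j=2$, the triangle inequality $\e_2\leq \e_1+\delta$ and $|\xi|\leq M$ give $|\xi|^{\alpha'}\e_2^{\alpha'}\lesssim M^{-\alpha'/(d-1)} + M^{\alpha'}\delta^{\alpha'}$.

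The final step is the optimization: combining all of the above,
\[ |\varphi_*(n_1)-\varphi_*(n_2)| \lesssim_{\alpha'} \|\varphi\|_{C^5}\left(M^{-\alpha'/(d-1)} + M^{\alpha'}\delta^{\alpha'}\right),\]
and balancing the two terms by taking $M \asymp \delta^{-(d-1)/d}$ (which is admissible, i.e.\ $\geq 1$, for small $\delta$) yields the bound $\delta^{\alpha'/d} \leq \delta^\alpha$. I expect the main obstacle to be conceptual rather than technical: one must use Theorem~\ref{mainineq} with an exponent $\alpha'$ strictly less than $1$, and the loss $|\xi|^{\alpha'}$ as $|\xi|\to\infty$ in that theorem is precisely what limits the final H\"older exponent to $\alpha'/d < 1/d$. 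Improving the $|\xi|^{\alpha'}$-dependence in Theorem~\ref{mainineq} (or equivalently, upgrading the regularity estimate Lemma~\ref{lem: reg smooth} to remove the $\log|\xi|$ factors and replace them by bounded constants) would immediately yield a better H\"older exponent by the same optimization; this limitation is consistent with the comparison to Shen--Zhuge discussed in the introduction.
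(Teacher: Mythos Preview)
Your proposal is correct and follows essentially the same route as the paper's proof: approximate $n_1$ by a rational direction $\hat\xi$ via Dirichlet, use Lemma~\ref{lem rational dir limit} to collapse $L(\xi,\eta)$ to $L(\xi)$, bound both $|\varphi_*(n_j)-L(\xi)|$ by Theorem~\ref{mainineq}(ii), and optimize $M\asymp\delta^{-(d-1)/d}$ (which is exactly the paper's choice $M=\e^{-s/(s+1)}$ with $s=d-1$). Your write-up is in fact somewhat more explicit than the paper's about the role of the auxiliary exponent $\alpha'$ and about why the $|\xi|^{\alpha'}$ factor in Theorem~\ref{mainineq} is the bottleneck limiting the final H\"older exponent to strictly below $1/d$.
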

\begin{proof}
Let $n_1,n_2$ be any pair of irrational unit vectors and call $\e=|n_1-n_2|$. Let $M=\e^{-\frac{s}{s+1}}$ with $s=d-1$. By Dirichlet's Approximation Theorem (Lemma 2.11 in \cite{Feldman:2015aa}), there exists $\xi \in \mathbb{Z}^{d} \setminus \{0\}$ and $k\in \mathbb{Z}$ with $1\leq k \leq M$ such that:
\[|n_1-k^{-1}\xi |\leq Ck^{-1}M^{-{1/s}}.\]
Also 
\[|n_2-k^{-1}\xi |\leq \e+Ck^{-1}M^{-{1/s}}.\]
Note $|\xi |\lesssim k$, so 
\[|\xi||\e+Ck^{-1}M^{-{1/s}}| \lesssim \e^\frac{1}{s+1}.\]
Apply Theorem \ref{mainineq}, for any $0<\alpha <1$ we have,
\begin{align*}
|\varphi_*(n_1)-\varphi_*(n_2)|&\leq |\varphi_*(n_1)-L(\xi)|+|L(\xi)-\varphi_*(n_2)| \\
&\lesssim \e^\frac{\alpha}{1+s}  = \e^\frac{\alpha}{d}.  
\end{align*}
\end{proof}

\section{A Nonlinear Equation with Discontinuous Homogenized Boundary Data}\label{sec: discontinuity}

In this final section we study the second cell problem \eref{cell2} for nonlinear equations.  We give an example of a nonlinear divergence form equation, with smooth boundary condition, for which the boundary layer limit of \eref{cell2} depends on the approach direction $\eta$.

We consider the nonlinear operator
\[a(p_1,p_2,p_3)=\left(p_1,p_2,p_3+f(p_1,p_3)\right)^t\]
where
\[
f(p_1,p_3):=\frac{1}{8}\left( \sqrt{8p_1^2+9p_3^2}+p_3\right).
\]
Here $f$ is a solution of
\[
8f^2-2p_3 f-(p_1^2+p_3^2)=0.\]
It is easy to check that $f$ is positively $1$-homogeneous and uniformly elliptic.

We will take $\xi = e_3$ and $\eta= e_1$ or $e_2$ and we will call $(x_1,x_2,x_3) = (x,y,z)$.  For the boundary condition we choose,
\[ \varphi(y) = \frac{1}{3}+ \cos(y \cdot  \xi) \ \hbox{ so that } \ \varphi_*(\xi,s)=\frac{1}{3}+\cos(s).\]
It is worthwhile to note that arbitrary $\varphi_*(\xi,s)$ can be achieved by choose $\varphi(y) = \varphi_*(\xi,y \cdot \xi)$.  We aim to compute $L(\xi,\eta)$.

If $\eta=e_1$ \eref{cell2} becomes  
\begin{equation}\label{counter1}
\left\{
\begin{array}{ll}
 -\nabla\cdot \left(u_x,u_y,u_z+f(u_x,u_z)\right)=0 & \hbox{ in } \real^3_+, \vspace{1.5mm}\\
 u(x,y,0) =\frac{1}{3}+\cos x & \hbox{ in } \real^3_+.
 \end{array}\right.
\end{equation}
The operator and boundary data were chosen to make the solution
\[u(x,y,z)=(\frac{1}{3}+\cos x)e^{-z}.\]
Note that
\[f(u_x,u_z)=\frac{1}{3}e^{-z}\]
and so
\[(u_x,u_y,u_z+f(u_x,u_z))=(-\sin x\; e^{-z},\;0,\;-\cos x\; e^{-z})\]
from which it is easy to verify that $u$ solves \eqref{counter1}. The boundary layer limit in this case is $0$ and so, by its definition, $L(\xi,e_1) = 0$.

If $\eta = e_2$ then the equation becomes 
\begin{equation}
\left\{
\begin{array}{ll}
 -\nabla\cdot \left(u_x,u_y,u_z+f(u_x,u_z)\right)=0 & \hbox{ in } \real^3_+, \vspace{1.5mm}\\
 u(x,y,0) =\frac{1}{3}+\cos y & \hbox{ in } \real^3_+.
 \end{array}\right.
\end{equation}
This reduces to the following two-dimensional problem for $v(y,z) = u(x,y,z)$
\begin{equation}\label{counter2}
\left\{
\begin{array}{ll}
 -\nabla\cdot \left(v_y,\frac{9}{8}v_z+\frac{3}{8}|v_z|\right)=0 & \hbox{ in } \real^2_+ \vspace{1.5mm}\\
 v(y,0) =\frac{1}{3}+\cos y & \hbox{ on } \partial \real^2_+.
 \end{array}\right.
\end{equation}
Let $v$ be the solution of \eqref{counter2}.  Consider $w(y,z):=\left(\frac{1}{3}+\cos y\right)e^{-z}$, the solution from before,
\begin{align}
-\nabla\cdot \left(w_y,\frac{9}{8}w_z+\frac{3}{8}|w_z|\right) &=[(-\tfrac{4}{9} - \tfrac{1}{3}\cos y){\bf 1}_{\{\cos y  < 0\}} + \tfrac{1}{4}(\cos y - 1){\bf 1}_{\{ \cos y >0\}}]e^{-z} \notag\\
\label{subsolnw} &\leq 0. 
\end{align}
Thus $w$ is a subsolution of \eqref{counter2}, from Lemma~\ref{lem: hs comparison nonlinear} we have $w \leq v$.

The operator $(v_y,\frac{9}{8}v_z+\frac{3}{8}|v_z|)$ is uniformly elliptic and lipschitz continuous. We use a strong maximum principle of Serrin~\cite{serrin1970strong} (see Theorem $1'$ there), in any bounded domain, we either have $w\equiv v$ or $w<v$. Since the inequality in \eqref{subsolnw} is strict, except when $y = 0 \bmod 2\pi$, the case must be $w < v$. Since both $w,v$ are $1$-periodic in $y$ direction, restricting to the set $z=1$, $w(y,1)\leq v(y,1)-\delta$ for some $\delta>0$. Then by comparing $w$ and $v-\delta$ on $z\geq 1$, again using Lemma~\ref{lem: hs comparison nonlinear}, we deduce that $w\leq v-\delta$, in particular 
\[\lim_{z\to\infty}v\geq \lim_{z\to\infty}w+\delta=\delta.\]
Thus $L(\xi,e_2) <0 = L(\xi,e_1)$ and thus $\varphi_*(n)$ is discontinuous at the direction $e_3$.

\appendix

\section{}\label{sec: A}

\subsection{H\"older estimate in cone domain} We complete the proof of Lemma~\ref{lem: bdry cont linear} the H\"older estimate in the flat cone domain which we used above.
 \begin{proof}[Proof of Lemma~\ref{lem: bdry cont linear}]
  Suppose that 
  \[ | \grad g\|_{L^\infty(\partial \Omega \cap B_1)} \leq 1 \ \hbox{ and } \ \dashint_{B_1 \cap \Omega} |u^\e- g(0)|^2 \leq 1.\]
    Let some $\alpha < \alpha'<1$, by Lemma~\ref{lem: flat reg} there is a $1>\theta>0$ so that if $K_{\Sigma} = P_{n}$ for some $n \in S^{d-1}$ then,
\[ \sup_{B_\theta \cap P_n} |u^\e-g(0)| \leq \theta^{\alpha'}. \]
We prove by compactness that there exists $\delta>0$ sufficiently small such that for any solution $u^\e$ as above
\begin{equation}\label{e.compact}
 \left(\dashint_{B_\theta \cap \Omega} |u^\e - g(0)|^2\right)^{1/2} \leq \theta^\alpha . 
 \end{equation}
To achieve the H\"{o}lder estimate from \eref{compact} is the standard iteration argument.

 Suppose that the previous statement fails, that is there exists $f_k$ and corresponding $\Omega_k$ with $\delta_k = \| \grad f_k\|_\infty \to 0$, $A_k$ satisfying the standard assumptions, $\e_k >0$, $g_k$ with Lipschitz norm at most $1$ and corresponding $u_k$ solving the equation with boundary data $g_k$ on $\partial \Omega_k \cap B_1$ and
\[ \left(\dashint_{B_\theta \cap \Omega_k} |u_k - g_k(0)|^2\right)^{1/2} > \theta^\alpha.\]
  By taking subsequences we can assume that $A_k \to A$ uniformly, $g_k \to g$ uniformly and the $u_k$ converge to some $u$ weakly in $H^1$ and strongly in $L^2$. Then, assuming that $\e_k \to \e >0$, we claim $u$ solves
  \begin{equation}\label{e.compactness1}
  - \grad \cdot A(\tfrac{x}{\e}) \grad u = 0 \ \hbox{ in } \Omega \cap B_1 \ \hbox{ with } \ u = g \ \hbox{ on } \ \{x_d = 0 \} \cap B_1.
  \end{equation}
If $\e_k \to 0$ or to $\infty$ then we replace $A(x/\e)$ by $A^0$ or $A(0)$ respectively.

  The only part which is not the same as in \cite{Avellaneda:1987aa} is to check the boundary condition.  Consider the transformations
  \[ \Phi_k(x) = (x',x_d + f_k(x')) \ \hbox{ mapping } \ \Phi_k : \{x_d>0\} \to \{x_d > f_k(x')\}. \]
  Define $v_k = u_k \circ \Phi_k$.  Note that $|\Phi_k - x| \leq \delta_k$, $\grad v_k = \grad \Phi_k \grad u_k$ and $\|\grad \Phi_k - I\|_{L^\infty} \leq \delta_k$.  Therefore, up to taking a subsequence, the $v_k$ converge weakly in $H^1(B_{1}^+)$ and strongly in $H^{1/2}(B_1^+)$ to the same limit $u$.  Since the trace operator is is continuous $T: H^{1/2}(B_1^+) \to L^2(\{x_d = 0\} \cap B_1)$ we have that the trace of $v$ is the limit of the traces $g_k$ of the $v_k$.

  Then, once we have established the limit \eref{compactness1}, from the regularity estimate in the flat domain
  \[\theta^\alpha \leq \left(\dashint_{B_\theta \cap P_n} |u - g(0)|^2\right)^{1/2} \leq \theta^{\alpha'}\]
  which is a contradiction since $\alpha < \alpha'$ and $\theta <1$.

  \end{proof}

\subsection{Poisson kernel bounds in half-space intersection}
We return to prove the Poisson kernel bounds in the intersection of nearby half-spaces, Lemma~\ref{lem: PK bounds 1}.

\begin{proof}[Proof of Lemma~\ref{lem: PK bounds 1}]
The proof basically follows the proof of the Poisson kernel bounds in a smooth domain in \cite{Avellaneda:1987aa} (Lemma 21) except we need to be careful to deal with the singularity of the boundary.  We do the case $d \geq 3$, the $d=2$ case is a similar modification of the arguments in \cite{Avellaneda:1987aa} (Lemma 21).  Let $x,y \in K$ and call $r = |y-x|$.  We have the Green's function bound holding for $x,y \in K$, see Theorem 13 in \cite{Avellaneda:1987aa} and the remark below,
\[ |G_K(x,y)| \lesssim \frac{1}{r^{d-2}}.\]
We will first improve the Green's function bound, the bound on the Poisson kernel will follow.

If $\delta(x) > \frac{1}{3}r$ then $|G_K(x,y)| \lesssim \frac{\delta(x)}{r^{d-1}}$. Consider the case $\delta(x) < \frac{1}{3}r$.  Let $\overline{x} \in \partial K$ with $|x-\overline{x}| = \delta(x)$.  Then $G_K(\cdot,y)$ is a solution of the system in $B(\overline{x},r/2) \cap K$. For $\e$ sufficiently small depending on $\alpha$ the boundary H\"{o}lder estimates Lemma~\ref{lem: bdry cont linear} apply and
\begin{equation}\label{e.intboundG}
 G(z,y) \lesssim \frac{\delta(z)^\alpha}{r^{d-2+\alpha}} \ \hbox{ for all } \ z \in B(\overline{x},r/3) \cap K 
 \end{equation}
in particular the bound holds at $z = x$.

Now we make a similar argument in the $y$ variable starting from \eref{intboundG}, however H\"{o}lder regularity is not sufficient anymore so we need to deal more directly with the singularity. Since we will send $y \to \partial K \setminus \{y_1 = 0\}$ we can just consider the case $\delta(y) \leq \min\{\frac{1}{3}r,|y_1|/2\} $.  Let $\overline{y} \in \partial K$ with $|y-\overline{y}| = \delta(y)$.  Then $G_K(x,\cdot)$ is a solution of the adjoint equation in $B(\overline{y},r/2) \cap K$.  If $|y_1| \geq r/2$ then $|\overline{y}_1| \geq |y_1| \geq  r/2$ and $B(\overline{y},r/2) \cap K$ is the intersection of a half space with the ball $B(\overline{y},r/2)$.  The boundary Lipschitz estimate of \cite{Avellaneda:1987aa} applies and
\[ |G(x,z)| \lesssim \frac{\delta(z)\delta(x)^\alpha}{r^{d-1+\alpha}} \ \hbox{ for all } \ z \in B(\overline{y},r/3) \cap K,\]
since $\delta(y) \leq \frac{1}{3}r$ we get the bound at $z=y$.  If $|y_1| \leq r/2$ then we instead apply the boundary Lipschitz estimate in $B(\overline{y},|y_1|)$ to find,
\[ |G(x,z)| \lesssim \frac{\delta(z)\delta(x)^\alpha}{|y_1|r^{d-2+\alpha}} \ \hbox{ for all } \ z \in B(\overline{y},|y_1|/2) \cap K \]
since $\delta(y) \leq |y_1|/2$ we get the bound at $z=y$.  The bounds for the Poisson kernel follow by taking appropriate difference quotients.
\end{proof}

\subsection{Large scale boundary regularity nonlinear equations} We return to prove the De Giorgi boundary H\"{o}lder estimates, Lemma~\ref{lem: bdry cont nonlinear}, for scalar equations with bounded uniformly elliptic coefficients.
 \begin{proof}[Proof of Lemma~\ref{lem: bdry cont nonlinear}]
 Without loss we can assume that $\osc_{\Omega \cap B_1} u = 1$ and $0 \leq u \leq 1$ in $\Omega \cap B_1$. Call $M= \max_{\partial \Omega \cap B_1} \varphi$ and consider,
 \[ v= (u -M)_+ \ \hbox{ which is a sub-solution of } \  - \grad \cdot( A(x) \grad v )\leq 0 \ \hbox{ in } \ B_1. \]
 Now since,
 \[ |\{ v \leq 0\} \cap B_1| \geq \mu \]
 we apply the De Giorgi weak Harnack inequality to find,
 \[ v \leq (1-\delta)\left(\max_{\Omega \cap B_1} u - M\right) \ \hbox{ in } \ B_{1/2},\]
 for some $\delta>0$ depending on $\mu, d,\lambda$.  Making the same argument for $-u$ we find,
 \[ \osc_{\Omega \cap B_{1/2} } u \leq (1-\delta) \osc_{\Omega \cap B_1} u +\delta \osc_{\partial \Omega \cap B_1} \varphi.\]
 Iterating this argument we obtain,
 \[ \osc_{\Omega \cap B_{1/2^k} } u \leq (1-\delta)^k \osc_{\Omega \cap B_1} u + \sum_{j=0}^{k-1}\delta(1-\delta)^{k-j-1} \osc_{\partial \Omega \cap B_{1/2^{j}}} \varphi.\]
 Using the H\"{o}lder continuity of $\varphi$,
 \[ \osc_{\Omega \cap B_{1/2^k} } u \leq (1-\delta)^k\left(\osc_{\Omega \cap B_1} u +[\varphi]_{C^\beta}\sum_{j=0}^{k-1}\delta(1-\delta)^{-j-1} 2^{-j\beta} \right)\]
 Choosing $\delta$ smaller if necessary so that,
 \[ 2^{-\beta} < (1-\delta),\]
 the summation is bounded independent of $k$ and,
 \[ \osc_{\Omega \cap B_{1/2^k} } u \leq C(\alpha) \left(\osc_{\Omega \cap B_1} u +[\varphi]_{C^\beta}\right) 2^{-\alpha k} \ \hbox{ with } \ \alpha = -\frac{\log (1-\delta)}{\log2} < \beta.\]
 \end{proof}

\section{}\label{sec: B}

In this section we complete the proof of Lemma~\ref{lem: bdry layer rhs}.  Recall that we are considering the boundary layer problem with,
 \begin{equation}\label{e.appendixeqn}
  \left\{
 \begin{array}{ll}
 - \grad \cdot(A(x) \grad v) = \grad \cdot f & \hbox{ in } \real^d_+ \vspace{1.5mm}\\
 v = \psi(x') & \hbox{ on } \partial \real^d_+.
 \end{array}
 \right.
 \end{equation}
 where $\psi : \partial \real^d_+ \to \real$ and $f$ are smooth, $A$ satisfies the usual assumptions from Section~\ref{sec: linear background} and, furthermore, $\psi$, $f$, and $A$ all share $d-1$ linearly independent periods $\ell_1,\dots, \ell_{d-1} \in \partial \real^d_+$ such that, for some $M >2$,
 \[ \max_{1 \leq j \leq d-1} |\ell_j| \leq M.\]
 The following maximal function type norms turn out to be useful,
   \begin{equation}\label{e.fnormdef}
M_p(f,R) := \sup_{y \cdot e_d = 0, R' \geq R} \|f\|_{L^p_{avg}(B_{R'/2}(y+R'e_d))} 
  \end{equation}
  and
  \begin{equation}
  I_p(f) := M_p(f,0) + \sum_{N \in 2^{\mathbb{N}}} N M_p(f,N).
  \end{equation}
 Note that $M_p(f,0) = \|f\|_{L^\infty(\real^d_+)}$.

  We write $v$ by the Green's function formula,
  \[ v(x) = \int_{\partial \real^d_+} P(x,y) \psi(y) \ dy + \int_{\real^d_+} \grad_x G(x,y) f(y) \ dy.\]
The first result is an $L^\infty$ estimate,
\begin{lem}\label{lem: rhs infty}
For any $p>d$,
\[ \osc_{\real^d_+} v \lesssim_p \osc_{\partial \real^d_+} \psi + I_p(f) \]
\end{lem}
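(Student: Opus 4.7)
The plan is to split $v = v_\psi + v_f$ via the Green's function representation, where
\[v_\psi(x) = \int_{\partial \real^d_+} P(x,y) \psi(y)\,dy \quad \text{and} \quad v_f(x) = \int_{\real^d_+} \grad_x G(x,y) f(y)\,dy.\]
The function $v_\psi$ is a bounded solution of the homogeneous system with boundary data $\psi$, so applying Lemma~\ref{lem: system max} to the difference $v_\psi - c$ with $c$ the midpoint of the range of $\psi$ yields $\osc v_\psi \lesssim \osc \psi$. It therefore suffices to prove the pointwise estimate $\|v_f\|_{L^\infty(\real^d_+)} \lesssim_p I_p(f)$, from which $\osc v_f \leq 2 \|v_f\|_\infty \lesssim I_p(f)$ follows immediately.

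To bound $v_f$ pointwise, I would fix $x \in \real^d_+$ with $R = x_d$ and decompose the integral dyadically in $|x-y|$. The near piece $|x-y| \leq 1$ uses the uniform Green's function bound $|\grad_x G(x,y)| \leq C|x-y|^{-(d-1)}$ from Theorem~\ref{integral} together with $|f| \leq M_p(f,0)$; the spherical volume element cancels the singularity so this piece contributes $\lesssim M_p(f,0)$. On each dyadic annulus $A_k = \{2^{k-1} < |x-y| \leq 2^k\} \cap \real^d_+$ for $k \geq 0$, the same bound $|\grad_x G| \lesssim 2^{-k(d-1)}$ combined with H\"older's inequality at exponent $p > d$ (so that $(d-1)p' < d$) and $|A_k| \lesssim 2^{kd}$ gives
\[\int_{A_k} |\grad_x G||f|\,dy \lesssim 2^{-k(d-1)} |A_k|^{1/p'} \|f\|_{L^p(A_k)} \lesssim 2^k \|f\|_{L^p_{avg}(B(x,2^k)\cap \real^d_+)}.\]

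The main obstacle is the remaining step of controlling $\|f\|_{L^p_{avg}(B(x, 2^k) \cap \real^d_+)}$ by $M_p(f,\cdot)$ uniformly in the relationship between $R$ and $2^k$, which requires a geometric case split based on whether $x$ is close to $\partial \real^d_+$ relative to the annulus scale. In the boundary-scale case $R \leq 2^k$, the set $B(x, 2^k) \cap \real^d_+$ fits inside a single $M_p$-type ball $B_{2^k}(\tilde y + 2^{k+1} e_d)$ with $\tilde y$ on the boundary, yielding annulus contribution $\lesssim 2^k M_p(f, 2^k)$. In the deep-interior case $R > 2^k$, the ball $B(x, 2^k)$ is contained in the slab $\{y_d > R/2\}$ and sits inside the $M_p$-type ball $B_{R/2}(y_0 + Re_d)$ for $y_0 = (x',0)$; comparing averages gives $\|f\|_{L^p_{avg}(B(x,2^k))} \lesssim (R/2^k)^{d/p} M_p(f,R)$ and annulus contribution $\lesssim 2^{k(1-d/p)} R^{d/p} M_p(f,R)$. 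Since $p > d$, summing the deep-interior case over $k$ with $2^k < R$ is a converging geometric sum dominated by its top term and evaluates to $\lesssim R M_p(f,R)$; using that $M_p$ is nonincreasing in its second argument, this is in turn bounded by $2 \cdot 2^{k_0 - 1} M_p(f, 2^{k_0 - 1})$ for $2^{k_0 - 1} \leq R \leq 2^{k_0}$, hence absorbed into a single dyadic term of the boundary-scale sum. Adding all contributions produces the uniform bound
\[|v_f(x)| \lesssim M_p(f, 0) + \sum_{N \in 2^{\mathbb{N}}} N M_p(f, N) = I_p(f),\]
completing the proof.
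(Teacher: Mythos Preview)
Your split $v = v_\psi + v_f$ and the handling of $v_\psi$ are fine, and the dyadic decomposition of $v_f$ in $|x-y|$ is a natural first attempt. The gap is in the boundary-scale case $R \leq 2^k$: your containment claim
\[
B(x,2^k) \cap \real^d_+ \subset B_{2^k}(\tilde y + 2^{k+1} e_d)
\]
is false. The right-hand ball lies in $\{y_d \geq 2^k\}$, while the left-hand set contains points with $y_d$ arbitrarily close to $0$ (since $x_d = R \leq 2^k$ means $B(x,2^k)$ crosses the boundary). No finite collection of $M_p$-type balls $B_{R'/2}(y_0 + R'e_d)$ at scale $R' \sim 2^k$ can cover the low-lying part of $B(x,2^k)\cap\real^d_+$, because those balls never reach below $y_d = R'/2$. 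So the step $\|f\|_{L^p_{\textup{avg}}(B(x,2^k)\cap\real^d_+)} \lesssim M_p(f,2^k)$ does not follow.

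This is not a cosmetic issue: using only the crude bound $|\grad_x G(x,y)| \lesssim |x-y|^{1-d}$ the conclusion is actually false. Take (morally) $f = \mathbf{1}_{\{0<y_d<1\}}$; then $M_p(f,N) = 0$ for $N \geq 4$ so $I_p(f)<\infty$, yet for $x = (0,T)$ with $T$ large,
\[
\int_{\{0<y_d<1\}} |x-y|^{1-d}\,dy \;\sim\; \int_{\real^{d-1}} (T+|y'|)^{1-d}\,dy' \;=\; \infty.
\]
What saves the actual lemma is the sharper Avellaneda--Lin bound from Theorem~\ref{integral},
\[
|\grad_x G(x,y)| \lesssim \frac{\delta(y)}{|x-y|^d} + \frac{\delta(x)\delta(y)}{|x-y|^{d+1}},
\]
which carries an extra factor of $\delta(y)$. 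The paper exploits this by decomposing $\real^d_+$ dyadically in $\delta(y)$ (a Whitney decomposition with cubes $Q_{N,j}$ of side $\sim N$ at height $\sim N$), which matches both the definition of $M_p(f,N)$ and the $\delta(y)$ gain in the kernel; the resulting bound $\|\grad_x G(x,\cdot)\|_{L^{p'}_{\textup{avg}}(Q_{N,j})} \lesssim N^{1-d}(1+|j|)^{-d}$ is then summable in $j$ and gives the factor $N$ in $I_p(f)$. Your annular decomposition in $|x-y|$ is not adapted to this structure; to repair it you would have to further slice each annulus by height and invoke the refined kernel bound, at which point you have essentially reproduced the paper's Whitney argument.
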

\begin{proof}
The bound for the Poisson integral is already done in Lemma~\ref{lem: system max}.  For the Green's function term we use the Avellaneda-Lin bounds in Lemma~\ref{integral} along with a Whitney type decomposition.  

Let $x \in \real^d_+$, without loss $x = (0,x_d)$.  If $x_d \geq 1$ call $N_x \in 2^{\mathbb{N}}$ to be the unique dyadic such that $N_x \leq x_d < 2N_x$.  Then define $\alpha \in [1,2)$ such that $\alpha N_x = x_d$.  If $x_d \leq 1$ define $\alpha = 2$.  Now we make a cube decomposition, $Q_{N,j} := \alpha N (j,1) + \alpha [ -N/4,N/2)^d$ for $2 \leq N \in 2^{\mathbb{N}}$ and $j \in \integer^{d-1}$, with side length comparable to distance to $x_d = 0$.  For $N = 1$ we define $Q_{1,j} = \alpha (j,1) + \alpha [ -1,1/2)^d$. In this set up $(0,x_d) \in Q_{N_x,0}$.

Now we bound the Green's function integral by,
\begin{align*}
 \int_{\real^d_+} |\grad_x G(x,y)| |f(y)| \ dy   &= \sum_{Q}  \int_{Q} |\grad_x G(x,y)| |f(y)| \ dy \\
 & \leq \sum_{Q} |Q|\|\grad_x G(x,\cdot)\|_{L^p_{avg}(Q)} \|f \|_{L^p_{avg}(Q)} \\
 & \leq \sum_{N} \sum_{j} N^{d}\|\grad_x G(x,\cdot)\|_{L^{p'}_{avg}(Q)} M_p(f,N).
 \end{align*}
 We claim that, for any $p>d$ and any $N \in 2^{\mathbb{N}}$, $j \in \mathbb{Z}^{d-1}$,
 \begin{equation}\label{e.whitneybound}
 \|\grad_x G(x,\cdot)\|_{L^{p'}_{avg}(Q)} \lesssim_p N^{1-d}(1+|j|)^{-d}. 
 \end{equation}
 Taking the bound for granted we can complete the computation,
\[ \int_{\real^d_+} |\grad_x G(x,y)| |f(y)| \ dy \lesssim  \sum_{N} \sum_{j} N (1+|j|)^{-d} M_p(f,N)  \lesssim I_p(f),\]
where for the last inequality we used that $(1+|j|)^{-d}$ is summable on $\integer^{d-1}$.

Now we finish by proving \eref{whitneybound} using the Avellaneda-Lin bounds, Lemma~\ref{integral}.  When $j=0$ and $N = N_x$ we bound,
\begin{align*}
 \left(\frac{1}{|Q|}\int_Q |\grad_x G(x,y)|^{p'} \ dy \right)^{1/p'} &\lesssim \left(\frac{1}{|Q|}\int_Q |x-y|^{(1-d)p'} \ dy \right)^{1/p'} \\
 &\lesssim N^{-d/p'}\left(\int_0^{CN} r^{(1-d)p'}r^{d-1} \ dr\right)^{1/p'}\\
 & \lesssim N^{-d/p'}N^{((1-p')(d-1)+1)/p'} = N^{1-d}
 \end{align*}
 where we have used $p>d$ so that $(p'-1)(d-1)<1$ and the integral in the second line converges.
 
 When $j \neq 0$ and/or $N \neq N_x$ we have that $|x-y| \gtrsim \max\{N(1+|j|),N_x\}$ for $y \in Q_{N,j}$.  In this case,
 \begin{align*}
 |\grad_x G(x,y)| & \lesssim \frac{y_d}{|x-y|^d}+\frac{x_dy_d}{|x-y|^{d+1}}\\
 &\lesssim N^{1-d}(1+|j|)^{-d} + NN_x\max\{N(1+|j|),N_x\}^{-(d+1)} \\
 & \lesssim N^{1-d}(1+|j|)^{-d},
 \end{align*}
 which was the desired estimate.
\end{proof}

 Next we prove the existence of a boundary layer limit with convergence rate.  We assume the following exponential type bounds on $f$, which are well suited to the boundary layer problem, there are $K,b > 0 $ so that, for all $R>0$,
  \begin{equation}\label{e.fnorm'}
M_p(f,R) \leq \frac{K}{1+R} e^{-bR/M}.
  \end{equation}
  From \eref{fnorm'} one can compute,
  \[ I_p(f) \lesssim_b K\log M,\]
 and also
  \[ I_p(f,R) := \sum_{2^{\mathbb{N}} \ni N \geq R} NM_p(f,N) \lesssim_b K\log Me^{-bR/M}  \]

\begin{lem}\label{lem: layer limit rhs}
Let $v$, $f$, $\psi$ and $A$ as above in \eref{appendixeqn} with $f$ satisfying the exponential bound \eref{fnorm'}.  There exists $c_* \in \real^m$ such that,
\[ \sup_{y \cdot e_d \geq R}|v(y) - c_*| \lesssim_b ((\osc \psi) +K \log M)e^{-c_0R/M}   \]
where the rate $c_0$ depends on $b$ and universal constants.
\end{lem}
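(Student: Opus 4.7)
The argument mirrors the iterative scheme of Lemma~\ref{lem: exp tail nonlinear}, splitting $v$ into a homogeneous part (handled by that lemma) plus an inhomogeneous part (handled by the $L^\infty$-bound Lemma~\ref{lem: rhs infty}). The logarithmic factor $K\log M$ arises only in the initial step of the iteration; all subsequent steps contribute pure exponential decay.

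Fix a universal $L \geq 1$ to be chosen. For each integer $k \geq 0$ decompose $v$ on $\{y_d > kLM\}$ as $v = u^{(k)} + w^{(k)}$, where $u^{(k)}$ solves the homogeneous equation $-\grad\cdot(A\grad u^{(k)}) = 0$ with boundary trace $v(\cdot, kLM)$ on $\{y_d = kLM\}$, and $w^{(k)}$ solves $-\grad\cdot(A\grad w^{(k)}) = \grad\cdot f$ with zero boundary data there. Both inherit the $\ell_1,\ldots,\ell_{d-1}$ periodicity by uniqueness. Applying Lemma~\ref{lem: exp tail nonlinear} to $u^{(k)}$ (after translating back to $\real^d_+$) yields
\[ \osc_{\{y_d = kLM + R\}} u^{(k)} \leq C\bigl(\osc_{\{y_d = kLM\}} v\bigr) e^{-cR/M}, \qquad R \geq 0, \]
while Lemma~\ref{lem: rhs infty} applied to the translated problem for $w^{(k)}$ gives $\osc w^{(k)} \leq C I_p(\tilde f^{(k)})$ with $\tilde f^{(k)}(x) := f(x + kLM e_d)$. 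A direct computation from the decay hypothesis \eref{fnorm'} shows
\[ I_p(\tilde f^{(0)}) \lesssim_b K \log M, \qquad I_p(\tilde f^{(k)}) \lesssim_b K e^{-bkL} \text{ for } k \geq 1, \]
the key point being that for $k \geq 1$ the exponential factor $e^{-b(N+kLM)/M}$ dominates the dyadic sum $\sum_N N/(1+N+kLM)$ uniformly from $N = 1$, so no logarithm accumulates.

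Setting $a_k := \osc_{\{y_d = kLM\}} v$, evaluating the two bounds at $R = LM$ and applying the triangle inequality produces the recursion
\[ a_{k+1} \leq C e^{-cL} a_k + C I_p(\tilde f^{(k)}). \]
Choose $L$ universal so that $Ce^{-cL} \leq 1/2$; iterating from $a_0 = \osc\psi$ gives $a_k \leq C(\osc \psi + K\log M) e^{-c_0 k}$ for some $c_0 > 0$ depending on $b$ and universal constants, where $\log M$ is introduced only by the $k=0$ step and is carried through the geometric iteration as a multiplicative factor. The corresponding full shifted half-space bound then follows from Lemma~\ref{lem: rhs infty} applied to the shifted problem, giving $\osc_{\{y_d \geq kLM\}} v \leq C(a_k + I_p(\tilde f^{(k)})) \leq C(\osc \psi + K\log M) e^{-c_0 k}$. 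Hence $c_* := \lim_{R \to \infty} v(Re_d)$ exists, and interpolating between consecutive integer multiples of $LM$ produces the stated estimate with $b_0 := c_0/L$.

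\textbf{Main obstacle.} The principal technical point is the precise estimation of $I_p(\tilde f^{(k)})$: one must verify that shifting by $kLM$ transforms the bound $I_p(f) \lesssim K\log M$ into the cleaner bound $I_p(\tilde f^{(k)}) \lesssim Ke^{-bkL}$ for $k \geq 1$, so that the $\log M$ factor appears only as a multiplicative constant in the final estimate rather than polluting the exponential rate.
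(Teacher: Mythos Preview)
Your approach is correct and is essentially the one the paper has in mind: the paper omits the proof entirely, pointing to Lemma~A.4 of \cite{Feldman:2015aa}, and your iterative decomposition into a homogeneous part (handled by Lemma~\ref{lem: exp tail nonlinear}) plus an inhomogeneous part (handled by Lemma~\ref{lem: rhs infty}) is exactly the scheme that lemma uses. Your identification of the key technical point---that the shifted norm $I_p(\tilde f^{(k)})$ loses the $\log M$ factor for $k\geq 1$ because the exponential prefactor $e^{-bkL}$ absorbs the dyadic sum---is precisely what makes the iteration close, and your recursion $a_{k+1}\leq \tfrac12 a_k + CI_p(\tilde f^{(k)})$ together with the final half-space oscillation bound via Lemma~\ref{lem: rhs infty} is the right way to assemble the estimate.
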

The proof is almost the same as Lemma A.4 of \cite{Feldman:2015aa} so we omit it.

\medskip

\noindent \textbf{Acknowledgements.} The first author was partially supported by the National Science Foundation RTG grant DMS-1246999.  The second author was partially supported by National Science Foundation grant DMS-1566578.


\bigskip

%


\end{document}